\newtheorem{theorem}{Theorem}[section]
\newtheorem{corollary}{Corollary}
\newtheorem{lemma}[theorem]{Lemma}
\newtheorem{proposition}{Proposition}
\theoremstyle{definition}
\newtheorem{definition}[theorem]{Definition}
\newtheorem{remark}{Remark}
\newtheorem{example}{Example}
\newcommand{\xdag}{x^\dagger}
\title[Two new iterative regularization methods] 
      {Two new non-negativity preserving \\ iterative regularization methods for ill-posed inverse problems}
\author[Ye Zhang and Bernd Hofmann]{}
\subjclass{47A52, 65J20, 65F22, 65R30.}
 \keywords{Ill-posed inverse problems, non-negativity constraint, regularization, iterative scheme, convergence rate}
 \email{ye.zhang@smbu.edu.cn}
 \email{hofmannb@mathematik.tu-chemnitz.de}
\thanks{$^*$ Corresponding author: hofmannb@mathematik.tu-chemnitz.de}
\begin{document}
\maketitle

\centerline{\scshape Ye Zhang}
\medskip
{\footnotesize
 \centerline{Shenzhen MSU-BIT University, 518172 Shenzhen, and}
   \centerline{School of Mathematics and Statistics, Beijing Institute of Technology, 100081 Beijing, China}
} 

\medskip

\centerline{\scshape Bernd Hofmann$^*$}
\medskip
{\footnotesize
 \centerline{Faculty of Mathematics, Chemnitz University of Technology,}
   \centerline{Reichenhainer Str. 39/41, 09107 Chemnitz, Germany}
}

\bigskip


\begin{abstract}
Many inverse problems are concerned with the estimation of non-negative parameter functions. In this paper, in order to obtain non-negative stable approximate solutions to ill-posed linear operator equations in a Hilbert space setting, we develop two novel non-negativity preserving iterative regularization methods. They are based on fixed point iterations in combination with preconditioning ideas. In contrast to the projected Landweber iteration, for which only weak convergence can be shown for the regularized solution when the noise level tends to zero, the introduced regularization methods exhibit strong convergence. There are presented convergence results, even for a combination of noisy right-hand side and imperfect forward operators, and for one of the approaches there are also convergence rates results. Specifically adapted discrepancy principles are used as \emph{a posteriori} stopping rules of the established iterative regularization algorithms. For an application of the suggested new approaches, we consider a biosensor problem, which is modelled as a two dimensional linear Fredholm integral equation of the first kind. Several numerical examples, as well as a comparison with the projected Landweber method, are presented to show the accuracy and the acceleration effect of the novel methods. Case studies of a real data problem indicate that the developed methods can produce meaningful featured regularized solutions.
\end{abstract}

\section{Introduction}

In this paper, we are interested in the stable approximate solution of an ill-posed linear operator equation,
\begin{equation}\label{OperatorEq}
A x = y, \quad x \in L^2_+(\Omega),
\end{equation}
for some bounded domain $\Omega \subset \mathbb{R}^d \;(d=1,2,\cdots)$, which generates the model of a linear inverse problem {\it under non-negativity constraints}. Here, $A$ denotes a compact linear operator acting between the real Hilbert space $L^2(\Omega)$ and an infinite dimensional Hilbert space $\mathcal{Y}$, such that the range $\mathcal{R}(A)$ of the forward operator $A$ is assumed to be an infinite dimensional subspace of $\mathcal{Y}$. In this context, we set
$$L^2_+(\Omega):= \left\{ x\in L^2(\Omega):~ x(t)\geq 0 \textrm{~for almost all~} t\in \Omega \right\}$$
and
$$\bar X:=\{x \in L^2_+(\Omega):\, Ax=y\}. $$
A restriction to solutions from $L^2_+(\Omega)$ is typical for inverse problems in real world applications, where the interested physical quantities are \emph{a priori} known to be non-negative, such as temperatures, material properties or densities. We refer, e.g., to \cite{Lagendijk1988,Piana1997} for a few of such applications in image restoration. Throughout this paper, we denote by $\langle \cdot,\cdot \rangle$ the
inner product in the Hilbert spaces $L^2(\Omega)$ and for simplicity by $\|\cdot\|$ the norm in  $L^2(\Omega)$ as well as the operator norm in the linear space $\mathcal{L}(L^2(\Omega), L^2(\Omega))$ of bounded linear operators.

It is well known that $x \in L^2(\Omega)$ can be decomposed as $x(t)=x_{+}(t)-x_{-}(t) \;(t\in \Omega)$ into a positive part $x_+(t)=\frac{|x(t)|+x(t)}{2}=\max(x(t),0)$
as well as a negative part \linebreak $x_{-}(t)=\frac{|x(t)|-x(t)}{2}=-\min(x(t),0)$, which are both positive functions for almost all $t \in \Omega$ that belong to $L^2_+(\Omega)$. Let us suppose that the (exact) right-hand side $y$ belongs to the restricted range $ A(L^2_+(\Omega))$. This means that there is an element $\bar x \in L^2(\Omega)$ with $A \bar x=y$ and ${\bar x}_{-}=0$, or in other words that the set $\bar X$ of solutions to $Ax=y$ with vanishing negative part is non-empty. Then for injective $A$, the set $\bar X=\{\xdag\}$ is a singleton.
For non-injective $A$ and prescribed arbitrary reference element $x_0 \in L^2(\Omega)$, we denote by $\xdag\in \bar X$ the $x_0$-minimum norm solution with $\|\xdag-x_0\| \le \|\bar x-x_0\|$ for all $\bar x \in \bar X$. The element $\xdag$ is always uniquely determined, because $\bar X$ is a closed and convex subset of $L^2(\Omega)$. Namely, $\bar X$ is the intersection of the linear manifold (shifted subspace) of solutions to $Ax=y,\; x \in L^2 (\Omega),$ and the cone of non-negative elements $x \in L^2 (\Omega)$ with $x(t) \ge 0$ for almost all $t \in \Omega$.

We try to recover in a stable approximate manner the solution $\xdag$ from noisy right-hand sides $y^\delta \in Y$, partially under the additional difficulty that only a perturbed forward operator $A_h$ (for any fixed $h>0$, $A_h \in \mathcal{L}(L^2(\Omega), L^2(\Omega))$ is also assumed to be a compact operator)
is given, where
\begin{equation}\label{ModelError}
\|y^\delta - y\|_{\mathcal{Y}} \leq \delta \quad \mbox{and} \quad \|A_h - A\|_{L^2(\Omega)\to\mathcal{Y}} \leq h
\end{equation}
characterize the deterministic noise model with non-negative noise levels $\delta$ and $h$.

Now we should make some brief remark on the character of ill-posedness of the operator equation (\ref{OperatorEq}). Even if $A$ is a compact linear operator, the classical ill-posedness criterion for linear operator equations having a non-closed range does not apply here, due to the existing non-negativity constraints. However, the concept of {\sl local ill-posedness} (cf.~\cite[Def.~3]{HofPla2018}) is applicable, which was originally developed for nonlinear problems.
To be precise, we learned from \cite[Remark~A4]{EKN1989} that for every element $x_{ref} \in L^2_+(\Omega)$ there are sequences $\{x_n\} \subset L^2_+(\Omega)$ that converge to $x_{ref}$ in the Hilbert space  $L^2(\Omega)$
weakly but not in norm. On the other hand, the compactness of $A$ implies that the sequence $\{Ax_n\}$ converges to $Ax_{ref}$ even in norm. Hence, the equation (\ref{OperatorEq}) is locally ill-posed at all points of $L^2_+(\Omega)$. This requires regularization, because otherwise small amounts of noise in the data and forward model may also lead to arbitrarily large errors in the approximate solutions.
Of course, it should be ensured that the regularized solutions all belong to $L^2_+(\Omega)$, too.

Roughly speaking, two groups of regularization methods exist for solving ill-posed problems (\ref{OperatorEq}): variational regularization methods and iterative regularization methods. Variational regularization methods, i.e., Tikhonov
regularization approaches with general misfit terms, general convex penalties, and convex constraints, including the non-negativity cone, have been intensively studied during the past three decades, see e.g. \cite{FlemmingHofmann2011,Neubauer1988,Tikhonov-1995}. In this work, our focus is on iterative approaches, which are more attractive because of the reduced amount of required computational effort, especially for large-scale inverse problems occurring after an appropriate discretization of the original operator equation (\ref{OperatorEq}).

The most prominent iterative regularization approach for solving (\ref{OperatorEq}) seems to be the projected Landweber method, which is given by the iteration procedure:
\begin{equation}\label{Landweber}
x^{h,\delta}_{k+1} = P_+\left[ x^{h,\delta}_{k} + \omega A^*_h ( y^\delta- A_h x^{h,\delta}_{k} )\right], \quad k=0,1,\cdots,
\end{equation}
with some starting element $x_0 \in L^2_+(\Omega)$ for the iteration and relaxation parameter  $\omega\in(0, 2/\|A_h\|_{L^2(\Omega)\to\mathcal{Y}}^2)$, where $A^*_h$ denotes the adjoint operator of $A_h$, and $P_+$ represents the metric (i.e. pointwise almost everywhere) projection onto $L^2_+(\Omega)$. In the case of exact data and forward operator, i.e. $\delta=h=0$, in contrast to the unconstrained Landweber iteration, we have only weak convergence of the scheme (\ref{Landweber}), cf.~\cite[Theorem 3.2]{Eicke1992}. Strong convergence can only be shown under additional restrictive conditions such as the compactness of $I-\omega A^*A$, cf.~\cite[Theorem 3.3]{Eicke1992}. In the case where $\delta\neq0$ (but $h=0$), we have the estimate $\|x^\delta_{k}-x_{k}\| \leq \omega \|A\|_{L^2(\Omega)\to\mathcal{Y}} k \delta$, cf.~\cite[Theorem 3.4]{Eicke1992}, which can be used to derive the regularization property of the projected Landweber method (\ref{Landweber}) in the weak topology. However, to our best knowledge, even this has not been investigated so far \cite{ClasonKaltenbacherResmerita2019}.

In order to obtain strong convergence of regularized solutions to the exact solution $\xdag$, by using the backward-forward splitting technique, the paper \cite{Eicke1992} proposed the dual projected Landweber iteration for problem (\ref{OperatorEq}):
\begin{equation}\label{LandweberDual}
x_{k} = P_+ A^* w_k, \quad w_{k+1} = w_k + \omega(y - A x_k), \quad \omega\in(0, 2/\|A\|^2)
\end{equation}
for the case of an exact operator and for the noise-free right-hand side of (\ref{OperatorEq}). It has been shown that the strong convergence $x_{k} \to x^\dagger$ as $k\to\infty$ of scheme (\ref{LandweberDual}) holds under the smoothness assumption:
\begin{equation}\label{smoothness}
x^\dagger \in \mathcal{R}(P_+ A^*),
\end{equation}
imposed on the exact solution $\xdag$. However, this source condition is more typical for convergence rates results, see for example the converse results for linear ill-posed problems \cite{Albani2016,Neubauer1997}. This motivates us to develop iterative regularization methods for the ill-posed problems (\ref{OperatorEq}) with non-negative solutions that exhibit strong convergence without using additional smoothness assumptions such as (\ref{smoothness}). To the best of our knowledge, assertions on convergence rates of iterative regularization methods for (\ref{OperatorEq}) with convex constraints, as well as the construction of \emph{a posteriori} stopping rules for such methods are quite limited in the literature. This is mainly due to the fact that regularization procedures under convex constraints become nonlinear, even if the forward operator is linear. Consequently, the standard analysis tools for linear methods cannot be employed. Furthermore, it seems that the combination of non-negativity constraints and perturbed operators has not yet been studied for ill-posed problems (\ref{OperatorEq}). Therefore, in this paper we aim to address all of these issues by introducing a class of new iterative regularization methods based on fixed point iterations in combination with preconditioned ideas.

Instead of imposing the non-negativity constraint during the iteration by projection, one can also construct an iterative method such that the non-negativity of the starting value is preserved during iterations. The most prominent class of such methods is formed by the expectation-maximization (EM) based algorithms, which have been firstly introduced in \cite{Dempster1977} for the approximation for maximum likelihood estimators of problems with incomplete or noisy data. For infinite dimensional ill-posed problems (\ref{OperatorEq}), the regularization property of EM-Kaczmarz type iterative methods has been shown in \cite{Haltmeier2009}. Our methods suggested in this paper also belong to the class of non-negativity preserving iterative methods.

The remainder of the paper is structured as follows: Section~2 is devoted to a fixed point-type equation and its properties, which is a basic tool for the developed approaches. In Section 3, we propose the first novel iterative method for solving the linear operator equation (\ref{OperatorEq}) under non-negativity constraints with both a noisy right-hand side and an inexact forward operator. In this section, we also prove for this method regularization properties and convergence rates results. Section~4 introduces the second novel iterative method, where regularization properties are discussed. In Section 5, the developed iterative methods, equipped with an a posteriori stopping rule, are applied to a biosensor tomography problem. Some numerical examples, as well as a comparison with the projected Landweber method, are presented in Section 6. Finally, concluding remarks are given in Section 7.

\section{A fixed point type equation and its properties}

In order to impose preconditioning effects on the operator equation (\ref{OperatorEq}) or the associated normal equation:
\begin{equation}\label{NormalEq}
A^*A x = A^*y, \quad x \in L^2_+(\Omega)
\end{equation}
Under non-negativity constraints, we introduce and exploit an auxiliary operator, \linebreak $G \in \mathcal{L}(L^2(\Omega),L^2(\Omega))$, which is defined as follows:

\smallskip
{\parindent0em Let} $G: L^2(\Omega) \to L^2(\Omega)$ be a strictly positive definite and self-adjoint bounded linear operator of the multiplication type with
multiplier function $m\in L^\infty(\Omega)$ and constants $\underline m, \overline m>0$, i.e.~we have:
\begin{equation} \label{eq:multiop}
[Gx](t):= m(t)\,x(t), \quad 0<\underline m \le m(t) \le \overline m < \infty, \quad t \in \Omega \;\;\mbox{a.e.}\,.
\end{equation}

{\parindent0em A} natural choice of $G$ and important special case of (\ref{eq:multiop}) is the diagonal operator,
\begin{equation} \label{simpleG}
G=\mu I,
\end{equation}
where $I: L^2(\Omega)\to L^2(\Omega)$ is the identity operator and $\mu>0$ is a given number. For simplicity, the convergence analysis of our two approaches proposed in Sections 3 and 4, respectively, is based on the special case (\ref{eq:multiop}) for the choice $G$. However, in the numerical experiments, we investigate the influence of different choices of $G$ for our algorithms. Similar to arguments in \cite{Piana1997}, the role of $G$ can be interpreted as a preconditioner for accelerating the original Landweber iteration. Numerical simulations in Section 6 indicate that with an appropriate choice of $G$, our iterative regularization algorithms exhibit the acceleration phenomenon.

The equation
\begin{equation}\label{identity_x}
(G + A^*A)x = (G - A^*A)|x| + 2 A^* y, \quad x \in L^2(\Omega)
\end{equation}
with solution set
$$\hat X:= \left\{x \in L^2(\Omega):\, x \;\mbox{obeys the operator equation (\ref{identity_x})} \right\} $$
is the basic tool for the suggested iterative methods in the following sections. It can be rewritten on the one hand as a fixed point equation,
\begin{equation}\label{fixed_point}
x = (G+A^*A)^{-1}[(G - A^*A)|x| + 2 A^* y], \quad x \in L^2(\Omega),
\end{equation}
and on the other hand as:
\begin{equation}\label{both_parts}
A^*A x_+ -G x_- =A^*y, \quad x=x_+-x_-\in L^2(\Omega),
\end{equation}
by using positive and negative parts.
The form (\ref{fixed_point}) is derived from (\ref{identity_x}) by left-side multiplication of the operator $(G+A^*A)^{-1}$, taking into account that by definition, $G$, and consequently $G+A^*A$, are positive definite operators which are both continuously invertible. From the form (\ref{both_parts}), however, we immediately see that the solution set $\hat X$ of equation (\ref{identity_x}) has $\bar X$ as a subset. Namely, every element $\bar x \in \bar X$ has the property $\bar x_-=0$ and therefore fulfils equation (\ref{both_parts}). As Proposition~\ref{pro:equiv} will show, under the stated assumption on $G$, the solution sets $\bar X$ and $\hat X$ do even coincide.

\begin{lemma} \label{lem:normal}
If the element $x^* \in L^2_+(\Omega)$ satisfies the conditions:
\begin{equation} \label{lemcond1}
 L^2_+(\Omega) \ni r:=A^*(Ax^*-y) \quad \mbox{and} \quad \langle x^*,r \rangle=0,
\end{equation}
then this element solves, under the stated assumption $\bar X \not=\emptyset$, the operator equation (\ref{OperatorEq}), i.e.~we have $Ax^*=y$.
\end{lemma}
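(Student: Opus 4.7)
The plan is to square out the residual $Ax^*-y$ and use the two hypotheses on $x^*$, together with a non-emptiness element of $\bar X$, as test vectors to sandwich $\|Ax^*\|$ between $\|y\|$ and itself.

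First I would rewrite the orthogonality condition in the image space: since
\[
0 = \langle x^*, r\rangle = \langle x^*, A^*(Ax^*-y)\rangle = \langle Ax^*, Ax^*-y\rangle,
\]
we obtain the identity $\|Ax^*\|_{\mathcal Y}^2 = \langle Ax^*, y\rangle_{\mathcal Y}$. Substituting this into the expansion
\[
\|Ax^*-y\|_{\mathcal Y}^2 = \|Ax^*\|_{\mathcal Y}^2 - 2\langle Ax^*,y\rangle_{\mathcal Y} + \|y\|_{\mathcal Y}^2
\]
yields the clean expression $\|Ax^*-y\|_{\mathcal Y}^2 = \|y\|_{\mathcal Y}^2 - \|Ax^*\|_{\mathcal Y}^2$. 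So it suffices to show $\|Ax^*\|_{\mathcal Y} \ge \|y\|_{\mathcal Y}$.

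Next I would bring in the standing assumption $\bar X \neq \emptyset$. Pick any $\bar x \in \bar X$, so $\bar x \in L^2_+(\Omega)$ and $A\bar x = y$. Since both $\bar x$ and $r$ lie in $L^2_+(\Omega)$ by hypothesis, their $L^2$-inner product is non-negative:
\[
0 \le \langle \bar x, r\rangle = \langle \bar x, A^*(Ax^*-y)\rangle = \langle A\bar x, Ax^*-y\rangle_{\mathcal Y} = \langle y, Ax^*\rangle_{\mathcal Y} - \|y\|_{\mathcal Y}^2.
\]
Combining this inequality with the Cauchy--Schwarz estimate
$\langle y, Ax^*\rangle_{\mathcal Y} \le \|y\|_{\mathcal Y}\,\|Ax^*\|_{\mathcal Y}$
gives $\|y\|_{\mathcal Y}^2 \le \|y\|_{\mathcal Y}\,\|Ax^*\|_{\mathcal Y}$, hence $\|y\|_{\mathcal Y}\le \|Ax^*\|_{\mathcal Y}$ (the trivial case $y=0$ is immediate from $\|Ax^*\|^2 = \langle Ax^*,y\rangle = 0$). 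Plugging this back into the identity from the first step gives $\|Ax^*-y\|_{\mathcal Y}^2 \le 0$, so $Ax^* = y$.

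The only conceptual hurdle is recognising that the hypothesis $r \in L^2_+(\Omega)$, while by itself only forces $\langle x^*,r\rangle \ge 0$ for non-negative $x^*$, becomes genuinely useful exactly when paired against an \emph{arbitrary} non-negative exact solution $\bar x$; this is where the non-emptiness of $\bar X$ enters and produces the reverse inequality $\|Ax^*\|\ge\|y\|$ that, together with the equality-in-Cauchy--Schwarz obtained from $\langle x^*,r\rangle = 0$, pins the residual to zero. Apart from this, the proof is a short algebraic manipulation and should not present any technical difficulty.
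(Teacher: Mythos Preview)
Your argument is correct. Both your proof and the paper's rest on the same two ingredients---the identity $\|Ax^*\|^2=\langle Ax^*,y\rangle$ coming from $\langle x^*,r\rangle=0$, and the non-negativity of $\langle \cdot,r\rangle$ on $L^2_+(\Omega)$---but the execution differs. The paper tests $r$ against \emph{every} $x\in L^2_+(\Omega)$ to show that $x^*$ is a minimizer of $\|Ax-y\|$ over the whole non-negative cone, and then invokes $\bar X\neq\emptyset$ to conclude the minimum is zero. You instead compute $\|Ax^*-y\|^2=\|y\|^2-\|Ax^*\|^2$ directly and test $r$ only against a single $\bar x\in\bar X$ to obtain $\|y\|^2\le\langle y,Ax^*\rangle$. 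Your route is a bit more economical for the stated goal; the paper's route yields the stronger intermediate fact that $x^*$ is a constrained least-squares solution.

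One small redundancy: the Cauchy--Schwarz step is unnecessary. You already established $\langle y,Ax^*\rangle=\|Ax^*\|^2$ in the first paragraph, so from $\|y\|^2\le\langle y,Ax^*\rangle$ you get $\|y\|^2\le\|Ax^*\|^2$ immediately, without passing through $\|y\|\,\|Ax^*\|$ or splitting off the case $y=0$. (Your closing remark about ``equality-in-Cauchy--Schwarz'' is also a slight misnomer: the identity $\|Ax^*\|^2=\langle Ax^*,y\rangle$ is not the equality case of Cauchy--Schwarz.)
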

\begin{proof}
We rewrite the second condition $\langle x^*,r \rangle=0$ as $\langle Ax^*,Ax^*-y \rangle=0$ and equivalently $\|Ax^*\|^2=\langle Ax^*,y \rangle$, which implies that for all $x \in L^2_+(\Omega)$,
\begin{equation*}
\begin{array}{ll}
&  \|Ax-y\|^2-\|Ax^*-y\|^2 =\|Ax\|^2-2\langle Ax,y \rangle-\|Ax^*\|^2+2\langle Ax^*,y \rangle \\
&  \quad =\|Ax\|^2+\|Ax^*\|^2-2\langle Ax,y \rangle =\|A(x-x^*)\|^2+2\langle x,A^*(Ax^*-y) \rangle \ge 0.
\end{array}
\end{equation*}
The last inequality holds since $\langle x,A^*(Ax^*-y) \rangle \ge 0$ by taking into account that both functions $x$ and $A^*(Ax^*-y)=r$ belong to $L^2_+(\Omega)$. This proves the lemma, since
$\|Ax^*-y\|= \min_{x \in L^2_+(\Omega)}\|Ax-y\|\le \|A\xdag-y\|=0.$
\end{proof}

By setting $r:=0$ in Lemma~\ref{lem:normal} and taking into account that $\langle x^*,r \rangle=0$ holds for all solutions $x^*\in L^2_+(\Omega)$ of the normal equation (\ref{NormalEq}),
we immediately obtain the following equivalence assertion from the lemma.

\begin{corollary} \label{cor:normal}
The solution set of the normal equation (\ref{NormalEq}) under non-negativity constraints and the solution set $\bar X$ of  (\ref{OperatorEq}) coincide.
\end{corollary}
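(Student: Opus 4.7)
The plan is to establish set equality by two inclusions, invoking Lemma~\ref{lem:normal} essentially for free via the specialization $r=0$ that the corollary statement already flags.

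First I would verify the easy inclusion: if $x^* \in \bar X$, then $Ax^*=y$ with $x^* \in L^2_+(\Omega)$, so applying $A^*$ to both sides gives $A^*Ax^*=A^*y$, which shows that $x^*$ belongs to the solution set of the normal equation (\ref{NormalEq}) under the non-negativity constraint. This direction does not even require $\bar X$ to be non-empty.

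For the reverse inclusion, I would pick any $x^* \in L^2_+(\Omega)$ satisfying $A^*Ax^*=A^*y$, and set $r:=A^*(Ax^*-y)$. By assumption $r=0$, hence $r \in L^2_+(\Omega)$ trivially and $\langle x^*,r\rangle =0$ also holds trivially. Thus both hypotheses (\ref{lemcond1}) of Lemma~\ref{lem:normal} are fulfilled, and the lemma yields $Ax^*=y$, i.e.~$x^* \in \bar X$. Since $\bar X \not=\emptyset$ is part of the standing assumption of the section, the lemma is applicable.

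There is no real obstacle here; the corollary is essentially a restatement of Lemma~\ref{lem:normal} in the degenerate case $r=0$, and the only thing to notice is that setting $r=0$ automatically validates both the non-negativity and the orthogonality conditions in (\ref{lemcond1}). I would therefore keep the proof to at most a few lines.
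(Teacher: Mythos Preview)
Your proposal is correct and matches the paper's own argument: the paper likewise derives the corollary by setting $r:=0$ in Lemma~\ref{lem:normal} and observing that both conditions in (\ref{lemcond1}) are then trivially satisfied for any non-negative solution of the normal equation. Your explicit treatment of the easy inclusion $\bar X \subseteq \{\text{normal-equation solutions}\}$ is a welcome addition that the paper leaves implicit.
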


Moreover, for all operators $G$ of type (\ref{eq:multiop}), we even find that $\hat X = \bar X$, which also means that all solutions to equation (\ref{identity_x}) have a vanishing negative part.

\begin{proposition} \label{pro:equiv}
Under the stated assumption $\bar X \not=\emptyset$ for the solution set $\bar X$ of equation (\ref{OperatorEq}) and for $G$ from (\ref{eq:multiop}), the operator equations (\ref{OperatorEq}) and (\ref{identity_x}) are equivalent to each other, i.e.~the solution sets $\bar X$ and $\hat X$ coincide.
\end{proposition}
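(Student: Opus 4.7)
The plan is to establish the two inclusions $\bar X \subseteq \hat X$ and $\hat X \subseteq \bar X$ separately, using Lemma~\ref{lem:normal} as the main engine for the harder direction. The inclusion $\bar X \subseteq \hat X$ is already indicated in the text: any $\bar x \in \bar X$ has $\bar x_- = 0$, so substituting into the split form (\ref{both_parts}) reduces it to $A^*A\bar x = A^*y$, which is immediate from $A\bar x = y$. So the real work is the reverse inclusion.

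For $\hat X \subseteq \bar X$, I would take an arbitrary $x^* \in \hat X$ and split it as $x^* = x^*_+ - x^*_-$ with $|x^*| = x^*_+ + x^*_-$. Substituting into (\ref{identity_x}) and regrouping terms with like signs gives exactly (\ref{both_parts}), namely
\begin{equation*}
A^*A\,x^*_+ - G\,x^*_- = A^*y.
\end{equation*}
Equivalently, setting $r := A^*(Ax^*_+ - y)$, one obtains the key identity $r = G\,x^*_-$.

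Now I would apply Lemma~\ref{lem:normal} with $x^*_+$ in place of $x^*$. Two hypotheses must be verified: (i) $r \in L^2_+(\Omega)$, and (ii) $\langle x^*_+, r\rangle = 0$. Both are where the multiplication structure of $G$ does the work: $r = G x^*_- = m\, x^*_-$ is non-negative almost everywhere because $m \ge \underline m > 0$ and $x^*_- \ge 0$, giving (i); and
\begin{equation*}
\langle x^*_+, r\rangle = \int_\Omega m(t)\, x^*_+(t)\, x^*_-(t)\, dt = 0
\end{equation*}
because the positive and negative parts have disjoint supports, giving (ii). Lemma~\ref{lem:normal} then yields $Ax^*_+ = y$, hence $r = 0$, hence $G\,x^*_- = 0$; since $G$ is strictly positive definite (in fact continuously invertible under (\ref{eq:multiop})), this forces $x^*_- = 0$. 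Therefore $x^* = x^*_+ \in L^2_+(\Omega)$ with $Ax^* = y$, i.e.\ $x^* \in \bar X$.

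The main obstacle — if one can call it that — is recognising that once (\ref{both_parts}) is rewritten in the form $r = G x^*_-$, the multiplication-operator assumption on $G$ is precisely what guarantees both the non-negativity of $r$ and the vanishing of $\langle x^*_+, r\rangle$ via disjoint supports of $x^*_\pm$. This is the step where the specific structural hypothesis (\ref{eq:multiop}) on $G$ is actually used; for a general strictly positive definite self-adjoint $G$, non-negativity of $r$ need not hold, so the argument is tailored to the multiplication case on which the later convergence analysis also relies.
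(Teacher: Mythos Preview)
Your proof is correct and follows essentially the same route as the paper's: both set $r:=G x^*_-$, verify the two hypotheses of Lemma~\ref{lem:normal} via the multiplication structure of $G$ and the disjoint supports of $x^*_\pm$, and invoke the lemma to obtain $Ax^*_+=y$. Your write-up is in fact slightly more complete than the paper's, since you explicitly close the loop by deducing $r=0\Rightarrow Gx^*_-=0\Rightarrow x^*_-=0$ from the invertibility of $G$, whereas the paper simply asserts that Lemma~\ref{lem:normal} ``yields the assertion'' without spelling out why $z$ (and not just $z_+$) lies in $\bar X$.
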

\begin{proof}
As already mentioned, every solution of (\ref{OperatorEq}) also solves (\ref{identity_x}), and we only have to prove the reverse implication. Let $z$ be an arbitrary solution to equation (\ref{identity_x}). We
Use Lemma~\ref{lem:normal} by setting $x^*:=z_+ \in L^2_+(\Omega)$ and $r:=G z_-$. As a consequence of the positive and bounded multiplier function $m$ in the definition of $G$ from (\ref{eq:multiop}), we then have $r \in L^2_+(\Omega)$ and,
$$\langle x^*,r \rangle= \int \limits_{t \in \Omega} m(t)\,z_-(t)\,z_+(t)\,dt =0.$$
Finally we have that $r=A^*(Ax^*-y)$, which is equivalent to $A^*Az_+-Gz_-=A^*y$, is satisfied for any solution $z$ to equation (\ref{identity_x}) due to the rewritten form (\ref{both_parts}). Then Lemma~\ref{lem:normal}
yields the assertion of the proposition and completes the proof.
\end{proof}

As a consequence of this proposition, we can search for approximate solutions to equation (\ref{identity_x}), which has no non-negativity restrictions if we want to
approximate solutions to the original operator equation (\ref{OperatorEq}) under non-negativity constraints. This will be used for the construction of iteration procedures in the subsequent sections.

\section{The first non-negativity preserving iterative regularization method}

\subsection{Algorithm 1 and its regularization property}

In this section we utilize for an iteration process the fixed point equation (\ref{fixed_point}), which is equivalent to (\ref{identity_x}) and (\ref{OperatorEq}), for {\sl compact} and {\sl injective} operators $A: L^2(\Omega)\to \mathcal{Y}$ and $A_h: L^2(\Omega)\to \mathcal{Y}$ with $h\in(0,h_0]$ and some constant $h_0>0$.
As a consequence of the injectivity of $A$, the operator equation (\ref{OperatorEq}) has a unique solution $\xdag$, which is assumed to belong to to $L^2_+(\Omega)$. For the non-injective case, we refer to our second approach proposed in Section 4. For both approaches, we only consider the special case (\ref{simpleG}) for the choice
of the operator $G$ from (\ref{eq:multiop}), i.e. $G=\mu I$ for prescribed $\mu>0$.

Our first iteration approach is described in Algorithm 1. The main goal in the section is to establish basic assertions of regularization theory  for Algorithm~1 in the sense of strong convergence and convergence rates results. To this end, let us first consider in Lemma~\ref{lemma_operator} relevant estimates of quantities involving the inexact forward operator, which will be used for the convergence analysis of both approaches in Sections 3 and 4. Its proof is technical, and can be found in the appendix.

\begin{algorithm}[!htb]
\label{algorithm2}
\caption{The first non-negativity preserving iterative regularization method for ill-posed operator equation (\ref{OperatorEq}) with non-negative solutions.}
\begin{itemize}
\item[] \textbf{Input:} Imperfect forward model $A_h$ and noisy data $y^\delta$ with noise levels $h$ and $\delta$.
\item[] \textbf{Parameters}: $x_0\in L^2(\Omega)$, $k\gets 1$.
\item[] \textbf{While:} $k$ does not satisfy the stopping rule:
\begin{enumerate}
\item[i.]
\begin{equation}\label{Iteration2}
\qquad\qquad
z^{h,\delta}_{k+1} = (G + A^*_h A_h)^{-1} \left[ (G - A^*_h A_h) |z^{h,\delta}_{k} | +  2 A^*_h y^\delta \right].
\end{equation}
\item[ii.] $k \gets k+1$;
\end{enumerate}
\item[] \textbf{Output:} the obtained regularized solution is $x^{h,\delta}_k := |z^{h,\delta}_{k}|$.
\end{itemize}
\end{algorithm}

\begin{lemma}\label{lemma_operator}
There exist three constants $C_{i}(A,G)$ ($i=1,2$) and $h_0(A,G)$, depending only on $\{A,G\}$, such that for all $h\in(0,h_0]$,
\begin{equation}\label{lemma_operator_ineq1}
\hspace{-8mm} \|(G + A^*_hA_h)^{-1} (G - A^*_hA_h) - (G + A^*A)^{-1}(G - A^*A)\|
\leq C_{1}(A,G) h,
\end{equation}
and
\begin{equation}\label{lemma_operator_ineq2}
\hspace{-8mm} \|(G + A^*_hA_h)^{-1} A^*_h - (G + A^*A)^{-1} A^*\| \leq C_{2}(A,G) h.
\end{equation}
\end{lemma}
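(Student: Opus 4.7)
Set $B := G + A^*A$ and $B_h := G + A_h^*A_h$. Since $G$ is a strictly positive multiplication operator with $G \ge \underline{m}\,I$ and the operators $A^*A$, $A_h^*A_h$ are self-adjoint and positive semi-definite, both $B$ and $B_h$ are strictly positive definite with $B, B_h \ge \underline{m}\,I$. Consequently they are boundedly invertible, with $\|B^{-1}\| \le 1/\underline{m}$ and $\|B_h^{-1}\| \le 1/\underline{m}$ uniformly in $h$. The plan is to derive both estimates from the resolvent identity
$$B_h^{-1} - B^{-1} \;=\; B_h^{-1}(B - B_h)\,B^{-1} \;=\; -\,B_h^{-1}\bigl(A_h^*A_h - A^*A\bigr)B^{-1},$$
combined with the simple splitting
$$A_h^*A_h - A^*A \;=\; A_h^*(A_h - A) + (A_h^* - A^*)A.$$
Fix $h_0 > 0$ arbitrarily; then for $h \in (0,h_0]$ we have $\|A_h\| \le \|A\| + h_0$, and hence $\|A_h^*A_h - A^*A\| \le h\bigl(2\|A\| + h_0\bigr) =: C_A\, h$. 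Combined with the bounds $\|B^{-1}\|, \|B_h^{-1}\| \le 1/\underline{m}$, the resolvent identity yields $\|B_h^{-1} - B^{-1}\| \le C_A\,h/\underline{m}^2$.

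For (\ref{lemma_operator_ineq1}) the convenient observation is that $G - A_h^*A_h = 2G - B_h$, so that
$$B_h^{-1}(G - A_h^*A_h) \;=\; 2\,B_h^{-1}G - I,$$
and analogously $B^{-1}(G - A^*A) = 2\,B^{-1}G - I$. Taking the difference, the identity operators cancel, and one is left with $2(B_h^{-1} - B^{-1})G$. Using $\|G\| \le \overline{m}$ together with the bound on $\|B_h^{-1} - B^{-1}\|$ obtained above immediately gives (\ref{lemma_operator_ineq1}) with $C_1(A,G) = 2\,\overline{m}\,C_A/\underline{m}^2$.

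For (\ref{lemma_operator_ineq2}) I would use the telescoping decomposition
$$B_h^{-1}A_h^* - B^{-1}A^* \;=\; B_h^{-1}(A_h^* - A^*) + (B_h^{-1} - B^{-1})A^*,$$
and bound the two terms separately: the first by $\|B_h^{-1}\|\,h \le h/\underline{m}$, the second by $\|B_h^{-1} - B^{-1}\|\,\|A\| \le C_A\,\|A\|\,h/\underline{m}^2$. Adding these yields (\ref{lemma_operator_ineq2}) with $C_2(A,G) = 1/\underline{m} + C_A\,\|A\|/\underline{m}^2$.

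There is no real obstacle here: the whole argument is routine operator algebra once one notices the reduction $G - A_h^*A_h = 2G - B_h$ that collapses the awkward term in (\ref{lemma_operator_ineq1}) to something linear in $B_h^{-1} - B^{-1}$. The only point requiring care is to fix $h_0$ at the start in order to control $\|A_h\|$ uniformly; any fixed $h_0 > 0$ works, and the constants $C_1, C_2$ then depend only on $A$ (through $\|A\|$ and $h_0$) and on $G$ (through $\underline{m}, \overline{m}$).
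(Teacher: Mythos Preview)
Your proof is correct and in fact cleaner than the paper's. The paper also starts from $\|A_h^*A_h - A^*A\| \le (2\|A\|+h)h$, but then expands the difference in (\ref{lemma_operator_ineq1}) directly as $(G+A_h^*A_h)^{-1}(G+A^*A)^{-1}\bigl[2(A^*A-A_h^*A_h)G + A_h^*A_hA^*A - A^*AA_h^*A_h\bigr]$ and bounds each piece by hand; to control $\|(G+A_h^*A_h)^{-1}\|$ it uses a Neumann-type perturbation estimate $(\|G+A^*A\| - \|A^*A - A_h^*A_h\|)^{-1}$, which is what forces the smallness condition $h_0 < \min\{\|A\|,\ \|G+A^*A\|/(3\|A\|)\}$. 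Your route is more economical on both counts: the spectral lower bound $B_h \ge \underline{m}\,I$ gives $\|B_h^{-1}\| \le 1/\underline{m}$ uniformly in $h$, so any fixed $h_0>0$ suffices; and the rewriting $G - A_h^*A_h = 2G - B_h$ collapses the whole expression to $2(B_h^{-1}-B^{-1})G$, avoiding the paper's commutator-like cross terms altogether. The telescoping argument for (\ref{lemma_operator_ineq2}) is essentially the same in spirit as what the paper intends (it only says ``the second inequality can be proven in the same way'').
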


\medskip

The following convergence theorem uses error estimates including perturbation with respect to noise in the right-hand side of (\ref{OperatorEq}) and with respect to the forward operator.

\begin{theorem}\label{reguThm}
For compact and injective forward operators $A,\;A_h$ and for $G=\mu I$ with some constant $\mu>0$, let $\{x^{h,\delta}_{k}\}\subset L^2_+(\Omega)$ be the sequence generated by Algorithm 1. If the stopping index $k^*=k^*(h,\delta)$ is chosen such that:
\begin{equation}\label{k_condition}
k^*\to \infty \;\textrm{~and~}\; k^*(h+\delta)\to 0 \;\textrm{~as~}\; h,\delta \to 0,
\end{equation}
then the approximate solution $x^{h,\delta}_{k^*}$ converges to $x^\dagger$ in norm of $L^2(\Omega)$ as $\delta,h\to0$.
\end{theorem}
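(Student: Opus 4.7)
The plan is to split the error by inserting the noise-free iterates $x_k := |z_k|$ of Algorithm~1 driven by the exact operator $A$ and the exact right-hand side $y$:
\[
\|x^{h,\delta}_{k^*} - \xdag\| \;\le\; \|x^{h,\delta}_{k^*} - x_{k^*}\| + \|x_{k^*} - \xdag\|.
\]
The first summand should be controlled by a stability estimate of order $k^*(h+\delta)$, and the second by strong convergence of the noise-free iteration; the parameter choice (\ref{k_condition}) then yields the claim. To prepare the estimates, I would rewrite the iteration compactly as $z^{h,\delta}_{k+1} = M_h\,x^{h,\delta}_k + 2\,B_h\,y^\delta$, with $M_h := (G+A^*_h A_h)^{-1}(G-A^*_h A_h) = I - 2\,B_h A_h$ and $B_h := (G+A^*_h A_h)^{-1}A^*_h$, and analogously $M$, $B$ for $A$. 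Proposition~\ref{pro:equiv} gives the fixed-point identity $\xdag = M\,\xdag + 2\,B\,y$, while the spectral representation $\sigma(BA) \subset [0,1)$ (valid because $A$ is compact and injective) yields $\|M\|\le 1$ and $\|B_h\| \le (2\sqrt{\mu})^{-1}$.

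For the stability summand, subtracting the two recursions and using $\||z^{h,\delta}_k|-|z_k|\| \le \|z^{h,\delta}_k - z_k\|$ together with the operator-perturbation bounds $\|M_h - M\| \le C_1 h$ and $\|B_h - B\| \le C_2 h$ from Lemma~\ref{lemma_operator} leads to an additive recursion of the form
\[
\|z^{h,\delta}_{k+1} - z_{k+1}\| \;\le\; \|z^{h,\delta}_k - z_k\| + C_3\bigl(h\,\|x^{h,\delta}_k\| + h\,\|y\| + \delta\bigr).
\]
Running the same type of estimate against $\xdag$ in place of $z_k$ produces a uniform bound on $\|x^{h,\delta}_k\|$ valid for all $k \le k^*$ under (\ref{k_condition}). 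Telescoping the recursion then yields $\|x^{h,\delta}_{k^*} - x_{k^*}\| \le C_4\,k^*(h+\delta)$, which vanishes by hypothesis.

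For the consistency summand, the key tool is the energy identity obtained by combining $z_{k+1} - \xdag = M(x_k - \xdag)$ with the pointwise formula $(|a| - b)^2 = (a - b)^2 - 4\,b\,a_-$ (valid for $b \ge 0$, applied with $a = z_{k+1}$ and $b = \xdag$) and the spectral expansion $\|Mv\|^2 = \|v\|^2 - 4\langle BA(I-BA)v, v\rangle$:
\[
\|x_{k+1} - \xdag\|^2 \;=\; \|x_k - \xdag\|^2 - 4\,\langle BA(I-BA)(x_k - \xdag),\,x_k - \xdag\rangle - 4\!\int_\Omega\!\xdag\,(z_{k+1})_-\,dt.
\]
Both correction terms are non-negative, so $\|x_k - \xdag\|$ decreases monotonically to some $L \ge 0$, and summing forces $\|(BA(I-BA))^{1/2}(x_k - \xdag)\| \to 0$. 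Since $BA$ is compact (as $A^*A$ is compact and $(G+A^*A)^{-1}$ bounded) and $(BA(I-BA))^{1/2}$ is injective (its kernel equals $\ker BA$, trivial by injectivity of $A$), every weak accumulation point of the bounded sequence $\{x_k\}$ must equal $\xdag$, hence $x_k \rightharpoonup \xdag$.

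The main obstacle will be upgrading this weak convergence to norm convergence, i.e.\ ruling out $L > 0$. The difficulty is intrinsic: $\|M\| = 1$ on the modes where the singular values of $A$ vanish, so no linear contraction is harvested; and the $|\cdot|$-nonlinearity defeats a direct spectral decomposition. My plan is to combine the asymptotic regularity $\|x_{k+1} - x_k\| \le \|z_{k+1} - x_k\| = 2\,\|BA(x_k - \xdag)\| \to 0$ (once more from compactness of $BA$ against the weak limit) with the uniqueness of the fixed point of the non-expansive map $T(u) := |M u + 2By|$ (a consequence of Proposition~\ref{pro:equiv} and injectivity of $A$), and to finish by passing to the limit in the energy identity through a demiclosedness-type argument for $T$, thereby forcing $L = 0$.
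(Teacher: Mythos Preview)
Your stability estimate is essentially Proposition~\ref{estimateNoisyMethod1} and is fine; the real issue is the consistency term $\|x_{k}-x^\dagger\|\to 0$, and here your plan has a genuine gap. The energy identity correctly yields Fej\'er monotonicity, summability of $\langle BA(I{-}BA)(x_k-x^\dagger),\,x_k-x^\dagger\rangle$, and weak convergence $x_k\rightharpoonup x^\dagger$. You are right that the obstacle is ruling out $L>0$, but the proposed cure --- asymptotic regularity plus uniqueness of the fixed point plus a ``demiclosedness-type argument'' --- cannot close it. Demiclosedness of a nonexpansive map $T$ asserts only that weak cluster points of orbits with $x_k-Tx_k\to 0$ are fixed points; it produces weak, not strong, convergence, and there is no general mechanism by which ``passing to the limit in the energy identity'' forces $L=0$ (in the limit that identity collapses to the tautology $L^2=L^2$). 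Nor do the extra ingredients you list help: sequences $v_k$ with $v_k\rightharpoonup 0$, $\|v_k\|\equiv L>0$ and $\langle BA(I{-}BA)v_k,v_k\rangle\to 0$ are abundant (e.g.\ $v_k=L\,u_k$ along the eigenbasis of $A^*A$), so your summability conclusion alone cannot exclude $L>0$ without exploiting the specific recursive structure of the $x_k$, and your sketch gives no such mechanism.

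The paper avoids the weak-to-strong problem entirely by a direct spectral argument rather than a Fej\'er-monotonicity one. It does not insert the noise-free iterates but compares $x^{h,\delta}_k$ to $x^\dagger$ in one stroke, iterating the one-step bound to reach
\[
\|x^{h,\delta}_k-x^\dagger\|\;\le\;\bigl\|M_h^{\,k}(x_0-x^\dagger)\bigr\|+k\bigl(C_3 h+2\|G\|^{-1/2}\delta\bigr),
\]
and then expands the first term in the eigensystem $\{\lambda^h_j,u_j\}$ of the compact injective operator $A_h^*A_h$ to obtain $\|M_h^{\,k}(x_0-x^\dagger)\|^2=\sum_j\bigl(\tfrac{\mu-\lambda^h_j}{\mu+\lambda^h_j}\bigr)^{2k}\langle x_0-x^\dagger,u_j\rangle^2\to 0$ by dominated convergence, since every factor has modulus strictly less than one. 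This pointwise spectral decay $\|M^k v\|\to 0$ for each fixed $v$ --- available precisely because $G=\mu I$ and $A$ is injective --- is the ingredient missing from your approach; if you want to retain your stability/consistency split, the clean fix is to replace the Fej\'er argument for the noise-free term by this spectral one.
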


\begin{proof}
By Algorithm 1 and the non-negativity of $x^\dagger$, we have:
\begin{equation*}
\begin{array}{ll}
&  \hspace{-5mm} \|x^{h,\delta}_{k+1}-x^\dagger\| = \| |z^{h,\delta}_{k+1}| -x^\dagger\| \leq \|z^{h,\delta}_{k+1}-x^\dagger\|
\\ & = \| (G  + A^*_hA_h)^{-1} \left[ (G - A^*_hA_h) |z^{h,\delta}_{k} | +  2 A^*_h y^\delta \right] \\ & \qquad - (G + A^*A)^{-1} \left[ (G - A^*A)x^\dagger + 2 A^* y \right] \|
\\ &  = \| (G + A^*_hA_h)^{-1} (G - A^*_hA_h) x^{h,\delta}_{k}  - (G  + A^*_hA_h)^{-1} (G - A^*_hA_h) x^\dagger
\\ & \hspace{-5mm} + (G  + A^*_hA_h)^{-1} (G - A^*_hA_h) x^\dagger - (G + A^*A)^{-1}(G - A^*A)x^\dagger + 2(G + A^*_hA_h)^{-1} A^*_h y^\delta
\\ & \hspace{-5mm} - 2 (G + A^*A)^{-1} A^* y^\delta
 + 2 (G + A^*A)^{-1} A^* y^\delta- 2 (G + A^*A)^{-1} A^* y\|
\\ & \leq \left\| [(G + A^*_hA_h)^{-1} (G - A^*_hA_h)] (x^{h,\delta}_{k}-x^\dagger) \right\|
\\ & \qquad + \|(G + A^*_hA_h)^{-1} (G - A^*_hA_h) - (G + A^*A)^{-1}(G - A^*A)\| \|x^\dagger \|
\\ & \qquad + 2 \|(G + A^*_hA_h)^{-1} A^*_h - (G + A^*A)^{-1} A^*\| \|y^\delta\|+ 2\|(G + A^*A)^{-1} A\|\delta.
\end{array}
\end{equation*}

For any fixed positive number $\delta_0$, denote $C_3= C_{1} \|x^\dagger \| + 2 C_{2} (\|y\| +\delta_0)$. Since $\|(G + A^*A)^{-1} A\|\leq \|G\|^{-1/2}$, we deduce from Lemma \ref{lemma_operator} that for all $\delta\in (0,\delta_0]$ and $h\in(0,h_0]$,
\begin{equation}\label{pfReguIneq1}
\begin{array}{ll}
& \hspace{-8.5mm}  \|x^{h,\delta}_{k}-x^\dagger\| \leq \left\| [(G + A^*_hA_h)^{-1} (G - A^*_hA_h)] (x^{h,\delta}_{k-1}-x^\dagger) \right\| + C_3 h + 2 \|G\|^{-1/2} \delta
\\ & \hspace{-8mm} \leq \left\| [(G + A^*_hA_h)^{-1} (G - A^*_hA_h)]^2 (x^{h,\delta}_{k-2}-x^\dagger) \right\| + 2\left( C_3 h + 2 \|G\|^{-1/2} \delta \right)
\\ & \hspace{-8mm} \leq \cdots \leq \left\| [(G + A^*_hA_h)^{-1} (G - A^*_hA_h)]^k (x_{0}-x^\dagger) \right\| + k\left( C_3 h + 2 \|G\|^{-1/2} \delta \right).
\end{array}
\end{equation}

On the other hand, for the non-negative self-adjoint and compact operator $A_h^*A_h$, denote by $\{\lambda^h_j; u_j\}^\infty_{j=1}$ the eigensystem with the complete orthonormal system $\{u_j\}^\infty_{j=1}$ of eigenelements in $L^2(\Omega)$ associated with the positive eigenvalues $\{\lambda^h_j\}^\infty_{j=1}$ such that $A^*_h A_h u_j = \lambda^h_j u_j$ and $\|A^*_h A_h\|_{L^2(\Omega) \to L^2(\Omega)}=\lambda^h_1\geq \lambda^h_2 \geq \cdots \to0$ as $j\to \infty$. Since $A_h$ is injective, we have the decomposition $x_0-\xdag=\sum^\infty_{j=1} \langle x_0-\xdag, u_j\rangle u_j$, which implies for $G=\mu I$ that:
\begin{equation}\label{ProofSpectral1}
\left[(G + A^*_hA_h)^{-1} (G - A^*_hA_h)\right]^k (x_0-\xdag) = \sum^\infty_{j=1} \left(\frac{\mu - \lambda^h_j}{\mu + \lambda^h_j} \right)^k \langle x_0-\xdag, u_j\rangle  u_j
\end{equation}
by noting that the operators $(\mu I+ A^*_hA_h)$ and $(\mu I - A^*_hA_h)$ commute.

Note that for each fixed $k$, $\left| \left(\frac{\mu - \lambda^h_j}{\mu + \lambda^h_j} \right)^{2k} \langle x_0-\xdag, u_j\rangle^2 \right|\leq \langle x_0-\xdag, u_j\rangle^2$
and that we have for $x_0-\xdag\in L^2(\Omega)$,
\begin{equation}\label{ProofSpectral2}
\sum^\infty_{j=1} \langle x_0-\xdag, u_j\rangle^2< \infty.
\end{equation}
Then, according to Lebesgue's dominated convergence theorem, (\ref{ProofSpectral2}) implies that the series $\sum^\infty_{j=1} \left(\frac{\mu - \lambda^h_j}{\mu + \lambda^h_j} \right)^{2k} \langle x_0-\xdag, u_j\rangle^2$ is uniformly convergent with respect to $k$. Consequently, we deduce that:
\begin{equation}\label{ProofSpectral3}
\hspace{-3mm} \lim_{k\to\infty} \sum^\infty_{j=1} \left(\frac{\mu - \lambda^h_j}{\mu + \lambda^h_j} \right)^{2k} \langle x_0-\xdag, u_j\rangle^2 = \sum^\infty_{j=1} \lim_{k\to\infty} \left(\frac{\mu - \lambda^h_j}{\mu + \lambda^h_j} \right)^{2k} \langle x_0-\xdag, u_j\rangle^2 =0
\end{equation}
by noting that $\left| \frac{\mu - \lambda^h_j}{\mu + \lambda^h_j}\right|<1$ holds for all fixed $\lambda^h_j>0$ and $\mu>0$.

Finally, by combining (\ref{pfReguIneq1}), (\ref{ProofSpectral3}) and the choice of $k^*$ in (\ref{k_condition}), we conclude that:
\begin{equation}\label{PfIneq0}
\begin{array}{ll}
&  \|x^{h,\delta}_{k^*}-x^\dagger\| \\ & \quad \leq \left\{ \sum\limits^\infty\limits_{j=1} \left(\frac{\mu - \lambda^h_j}{\mu + \lambda^h_j} \right)^{2k^*} \langle x_0-\xdag, u_j\rangle^2  \right\}^{1/2} + k^* \left( C_3 h + 2 \|G\|^{-1/2} \delta \right) \to 0
\end{array}
\end{equation}
as $h,\delta \to 0$.
\end{proof}

According to the inequality (\ref{PfIneq0}), it is clear that the following assertion holds true.
\begin{corollary} \label{cor:noiseFree}
Let $(x_{k})_k\subset L^2_+(\Omega)$ be the sequence generated by Algorithm 1 with $G=\mu I$ and noise-free information $\{A, y\}$. Then, $x_{k}$ converges to $x^\dagger$ as $k\to\infty$.
\end{corollary}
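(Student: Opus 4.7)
The corollary is essentially a specialization of the argument in the proof of Theorem~\ref{reguThm} to the noise-free regime $h=\delta=0$, and my plan is to retrace that argument while noting which pieces drop out. First I would observe that, with $A_h=A$, $y^\delta=y$, $h=0$ and $\delta=0$, the three perturbation contributions in (\ref{pfReguIneq1}) — the two operator-perturbation terms estimated via Lemma~\ref{lemma_operator} and the data-noise term $2\|G\|^{-1/2}\delta$ — all vanish, so the telescoping reduces to the clean bound
\[
\|x_{k}-x^\dagger\|\;\le\;\bigl\|[(G+A^*A)^{-1}(G-A^*A)]^{k}(x_{0}-x^\dagger)\bigr\|.
\]
The two auxiliary facts needed here are that $x^\dagger$ is itself a fixed point of the iteration map — because $x^\dagger\in L^2_+(\Omega)$ implies $|x^\dagger|=x^\dagger$, so $(G+A^*A)x^\dagger=(G-A^*A)x^\dagger+2A^*y$ collapses to the normal equation $A^*Ax^\dagger=A^*y$ guaranteed by $Ax^\dagger=y$ — and that $\||z|-x^\dagger\|\le\|z-x^\dagger\|$ in $L^2(\Omega)$, which follows pointwise a.e.\ from $x^\dagger\ge 0$.

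Next I would expand in the eigensystem $\{\lambda_j;u_j\}_{j\ge 1}$ of the compact, self-adjoint, non-negative operator $A^*A$, exactly as in (\ref{ProofSpectral1}) but for the exact operator. Since $G=\mu I$ commutes with $A^*A$, the operator $(G+A^*A)^{-1}(G-A^*A)$ is diagonal in $\{u_j\}$ with eigenvalues $(\mu-\lambda_j)/(\mu+\lambda_j)$, so
\[
\|x_{k}-x^\dagger\|^{2}\;\le\;\sum_{j=1}^{\infty}\Bigl(\frac{\mu-\lambda_j}{\mu+\lambda_j}\Bigr)^{2k}\langle x_{0}-x^\dagger,u_j\rangle^{2}.
\]
Injectivity of $A$ forces $\lambda_j>0$ for every $j$, so each factor $|(\mu-\lambda_j)/(\mu+\lambda_j)|$ is strictly less than one, and Parseval's identity guarantees that the sequence $j\mapsto\langle x_{0}-x^\dagger,u_j\rangle^{2}$ is summable.

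To close the argument I would invoke Lebesgue's dominated convergence theorem on $\mathbb{N}$ equipped with the counting measure, using this summable dominant exactly as in the passage (\ref{ProofSpectral3}); termwise passage to the limit then gives $\|x_k-x^\dagger\|\to 0$ as $k\to\infty$. I foresee no real obstacle, since the whole point of the corollary is that, in the absence of noise, the linear-in-$k$ error-amplification term $k^{*}(C_{3}h+2\|G\|^{-1/2}\delta)$ of (\ref{PfIneq0}) disappears, so one may send $k\to\infty$ unconditionally, with no stopping rule of the form (\ref{k_condition}) required.
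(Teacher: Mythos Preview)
Your proposal is correct and follows essentially the same approach as the paper: the paper simply remarks that the corollary is clear from inequality~(\ref{PfIneq0}), since setting $h=\delta=0$ kills the linear-in-$k$ noise term and the spectral argument (\ref{ProofSpectral1})--(\ref{ProofSpectral3}) already shows the remaining series tends to zero. You have spelled out exactly this specialization, including the dominated-convergence step, so there is nothing to add.
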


\begin{proposition}\label{estimateNoisyMethod1}
Let $(z^{h,\delta}_{k})^\infty_{k=1}$ and $(z_{k})^\infty_{k=1}$ be the sequences generated by Algorithm 1 with noisy information $\{A_h, y^\delta\}$ and exact information $\{A, y\}$, respectively. Then, there exists a constant $C_4$, depending only on $\{A,G\}$, such that for $h\in(0,h_0]$,
\begin{equation}\label{lemma_operator_Method1}
\|z^{h,\delta}_{k}  - z_{k} \| \leq C_4 (h+\delta) k.
\end{equation}
\end{proposition}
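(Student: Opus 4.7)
The plan is to prove the estimate by induction on $k$, with the base case trivial if we assume both iterations start from the same element $z^{h,\delta}_1=z_1=x_0$, so that $\|z^{h,\delta}_1-z_1\|=0$.

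For the inductive step, I would introduce the shorthands $B_h:=(G+A_h^*A_h)^{-1}(G-A_h^*A_h)$, $B:=(G+A^*A)^{-1}(G-A^*A)$, $T_h:=(G+A_h^*A_h)^{-1}A_h^*$, and $T:=(G+A^*A)^{-1}A^*$, so the iterations take the form $z^{h,\delta}_{k+1}=B_h|z^{h,\delta}_k|+2T_hy^\delta$ and $z_{k+1}=B|z_k|+2Ty$. Subtracting and adding the crossed term $B_h|z_k|+2T_hy^\delta$ yields the decomposition
\begin{equation*}
z^{h,\delta}_{k+1}-z_{k+1}=B_h\bigl(|z^{h,\delta}_k|-|z_k|\bigr)+(B_h-B)|z_k|+2(T_h-T)y^\delta+2T(y^\delta-y).
\end{equation*}
Each of the four terms is controlled by a different tool: the first by the pointwise inequality $\bigl||z^{h,\delta}_k|-|z_k|\bigr|\le|z^{h,\delta}_k-z_k|$ together with $\|B_h\|\le 1$; the second and third by Lemma~\ref{lemma_operator} (inequalities (\ref{lemma_operator_ineq1}) and (\ref{lemma_operator_ineq2})); and the fourth by $\|T\|=\|(G+A^*A)^{-1}A^*\|\le \|G\|^{-1/2}$ combined with $\|y^\delta-y\|\le\delta$. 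This yields the one-step recursion
\begin{equation*}
\|z^{h,\delta}_{k+1}-z_{k+1}\|\le \|z^{h,\delta}_k-z_k\|+C_1h\,\|z_k\|+2C_2h\,\|y^\delta\|+2\|G\|^{-1/2}\delta.
\end{equation*}

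The main technical point to settle is a uniform bound on $\|z_k\|$ (independent of $k$). For this I would exploit that $x^\dagger$ itself is a fixed point of the noise-free iteration (since $|x^\dagger|=x^\dagger$), so $z_{k+1}-x^\dagger=B(|z_k|-x^\dagger)$. Using $\|B\|\le 1$ on $L^2(\Omega)$, which follows from the spectral representation $\|Bu\|^2=\sum_j \bigl(\tfrac{\mu-\lambda_j}{\mu+\lambda_j}\bigr)^2\langle u,u_j\rangle^2$ with factors in $[0,1)$, together with the pointwise inequality $\bigl||z_k|-x^\dagger\bigr|\le|z_k-x^\dagger|$ (which uses non-negativity of $x^\dagger$), I obtain the monotonicity $\|z_{k+1}-x^\dagger\|\le\|z_k-x^\dagger\|$ and hence a uniform bound $\|z_k\|\le\|x_0-x^\dagger\|+\|x^\dagger\|=:M$. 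Combined with the standing assumption $\delta\le\delta_0$, which gives $\|y^\delta\|\le\|y\|+\delta_0$, the recursion simplifies to $\|z^{h,\delta}_{k+1}-z_{k+1}\|\le\|z^{h,\delta}_k-z_k\|+C_4(h+\delta)$ for a suitable constant $C_4=C_4(A,G,x_0,y,\delta_0)$.

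Telescoping from the base case $\|z^{h,\delta}_1-z_1\|=0$ then yields $\|z^{h,\delta}_k-z_k\|\le C_4(h+\delta)k$, as claimed. The main obstacle is the step establishing $\|z_k\|\le M$ uniformly in $k$: the iteration is nonlinear in $z_k$ through the absolute value, so linear operator bounds alone do not suffice, and one must use both the fixed-point property of $x^\dagger$ and the pointwise non-expansivity of $|\cdot|$ against a non-negative target.
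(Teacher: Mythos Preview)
Your proposal is correct and follows essentially the same approach as the paper: decompose the difference into four pieces, control the operator-perturbation pieces via Lemma~\ref{lemma_operator}, use $\|B_h\|\le 1$ together with the pointwise non-expansivity of $|\cdot|$ for the recursive piece, and telescope. The uniform bound $\|z_k\|\le \|x^\dagger\|+\|x_0-x^\dagger\|$ via the fixed-point property of $x^\dagger$ and the monotonicity $\|z_{k+1}-x^\dagger\|\le\|z_k-x^\dagger\|$ is exactly how the paper handles it (the paper phrases it through inequality~(\ref{pfReguIneq1}) specialized to $h=\delta=0$, but the content is identical).

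The only cosmetic difference is in the $y$-splitting: you write $2(T_h-T)y^\delta+2T(y^\delta-y)$, whereas the paper uses $2T_h(y^\delta-y)+2(T_h-T)y$. Both work; the paper's choice has the small advantage that the constant involves $\|y\|$ rather than $\|y^\delta\|\le\|y\|+\delta_0$, so it avoids the extra dependence on a fixed upper bound $\delta_0$ that you introduce.
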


\begin{proof}
Setting $h=\delta=0$ in (\ref{pfReguIneq1}), we obtain, for all $k\in \mathbb{N}$,
\begin{equation*}
\|x_{k}\|\leq \|\bar{x}\| + \|x_{k}-\bar{x}\| \leq \|\bar{x}\| + \|x_{k-1}-\bar{x}\| \leq \cdots \leq \|\bar{x}\| + \|x_{0}-\bar{x}\|,
\end{equation*}
which implies that:
\begin{equation*}
\begin{array}{ll}
& \|z^{h,\delta}_{k}-z_{k}\| = \Big\|  (G  +  A^*_hA_h)^{-1} \left[ (G - A^*_hA_h) |z^{h,\delta}_{k-1} | +  2 A^*_h y^\delta \right] \\
& \qquad \qquad - (G  +  A^*A)^{-1} \left[ (G - A^*A) |z_{k-1} | +  2 A^* y \right] \Big\| \\
& \quad \leq\|(G  +  A^*_hA_h)^{-1} (G - A^*_hA_h) (|z^{h,\delta}_{k-1} | - |z_{k-1}| )\| \\
& \qquad \qquad + \| [(G  +  A^*_hA_h)^{-1} (G - A^*_hA_h) - (G  +  A^*A)^{-1} (G - A^*A)] |z_{k-1}| \|  \\
& \qquad \qquad + 2 \|(G  +  A^*_hA_h)^{-1} A^*_h y^\delta - (G  +  A^*_hA_h)^{-1} A^*_h y \|
\\
& \qquad \qquad + 2 \|(G  +  A^*_hA_h)^{-1} A^*_h y - (G  +  A^*A)^{-1} A^* y \|  \\
& \quad \leq  \|z^{h,\delta}_{k-1}  - z_{k-1}\| + C_1 (\|\bar{x}\| + \|x_{0}-\bar{x}\|) h + \frac{4}{\sqrt{\|G\|}} \delta + 2 C_2 \|y\| h \\
& \quad \leq \cdots \leq \|z^{h,\delta}_{0}  - z_{0}\| + k \left[ C_1 (\|\bar{x}\| + \|x_{0}-\bar{x}\|) h + \frac{4}{\sqrt{\|G\|}} \delta + 2 C_2 \|y\| h \right] .
\end{array}
\end{equation*}
We complete the proof by noting $z^{h,\delta}_{0}  = z_{0} = x_0$ and defining $C_4=\max\{ C_1 (\|\bar{x}\|$ $+ \|x_{0}-\bar{x}\|) + 2 C_2 \|y\|, 4/\sqrt{\|G\|} \}$.
\end{proof}

For real-world inverse problems, the a posteriori stopping rules for iterative regularization methods are definitely more attractive, as they do not require any a priori knowledge of unknown exact solutions. They are also usually related  to  the residual error for the desired approximate solution, which represents one of the most important benchmark indices for evaluating the developed method in practice. In order to propose the suitable discrepancy principle for our approach, we define:
\begin{equation}\label{DefGh}
\tilde{G}:= \mu \,I_\mathcal{Y},
\end{equation}
where $I_\mathcal{Y}$ is the identity operator in $\mathcal{Y}$. Clearly, we have for $\tilde G$ from (\ref{DefGh}) and $G$ from (\ref{simpleG}) the identity,
\begin{equation}\label{IneqG}
(G + A^*_h A_h)^{-1} A^*_h = A^*_h (\tilde{G} + A_h A^*_h)^{-1},
\end{equation}
taking into account that for all $h>0$ the operator $\tilde{G} + A_h A^*_h$ is self-adjoint and positive definite and so is the inverse $(\tilde{G} + A_h A^*_h)^{-1}
\in \mathcal{L}(\mathcal{Y},\mathcal{Y})$.

Now we are in the position to propose the modified discrepancy principle: find $k^*$ such that for all $k \leq  k^*$,
\begin{equation}\label{Method1discrepancy}
\|(\tilde{G} + A_h A^*_h)^{-1} ( y^\delta - A_h |z^{h,\delta}_{k^*}|)\|_{\mathcal{Y}} \leq \tau(\delta + h C^\dagger)  \leq  \|(\tilde{G} + A_h A^*_h)^{-1} ( y^\delta - A_h |z^{h,\delta}_{k}|)\|_{\mathcal{Y}},
\end{equation}
where $\tau$ and $C^\dagger$ are two fixed numbers such that $\tau>1/\mu$ and $C^\dagger\geq\|x^\dagger\|$.

\begin{theorem}\label{reguThmPosteriori}
Let $x^{h,\delta}_{k^*}\in L^2_+(\Omega)$ be the approximate solution, obtained by Algorithm 1 with the stopping rule (\ref{Method1discrepancy}). Then, $x^{h,\delta}_{k^*}$ converges to $x^\dagger$ as $\delta,h\to0$.
\end{theorem}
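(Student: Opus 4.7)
The plan is the two-step skeleton classical for a posteriori stopping rules: first show that the index $k^*=k^*(h,\delta)$ defined by (\ref{Method1discrepancy}) is finite for all sufficiently small noise levels; then pass to the limit $(h,\delta)\to 0$, carrying the noise-free convergence of Corollary~\ref{cor:noiseFree} over to the perturbed iterates by means of Proposition~\ref{estimateNoisyMethod1} and the identity (\ref{IneqG}).

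For the finiteness step I would write $s_k:=\|(\tilde G+A_hA_h^*)^{-1}(y^\delta-A_h x_k^{h,\delta})\|_{\mathcal{Y}}$ and decompose
\begin{equation*}
s_k\leq \|(\tilde G+A_hA_h^*)^{-1}(y^\delta-A_h x^\dagger)\|_{\mathcal{Y}} + \|(\tilde G+A_hA_h^*)^{-1}A_h\|\cdot\|x^\dagger-x_k^{h,\delta}\|.
\end{equation*}
Thanks to $\tilde G=\mu I_\mathcal{Y}$, the first summand is at most $\mu^{-1}(\delta+h\|x^\dagger\|)$, which is strictly less than $\tau(\delta+hC^\dagger)$ by the standing assumptions $\tau\mu>1$ and $C^\dagger\ge\|x^\dagger\|$; identity (\ref{IneqG}) combined with the elementary inequality $\sqrt{\lambda}/(\mu+\lambda)\leq 1/(2\sqrt\mu)$ gives $\|(\tilde G+A_hA_h^*)^{-1}A_h\|\leq 1/(2\sqrt\mu)$; and for any $\epsilon>0$ one can pick $k(\epsilon)$ with $\|x_{k(\epsilon)}-x^\dagger\|\leq\epsilon$ by Corollary~\ref{cor:noiseFree} and then absorb $\|x_{k(\epsilon)}^{h,\delta}-x_{k(\epsilon)}\|\leq C_4(h+\delta)k(\epsilon)$ via Proposition~\ref{estimateNoisyMethod1} once $(h,\delta)$ is small enough. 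Hence $s_{k(\epsilon)}\leq\tau(\delta+hC^\dagger)$ and $k^*(h,\delta)<\infty$.

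For the convergence step, fix an arbitrary sequence $(h_n,\delta_n)\to 0$ and distinguish two cases. \emph{(i)} If $\{k^*_n\}$ admits a bounded subsequence, extract a constant subsequence $k^*_{n_j}\equiv K$. Proposition~\ref{estimateNoisyMethod1} yields $x_K^{h_{n_j},\delta_{n_j}}\to x_K$ in $L^2(\Omega)$; continuity (of Lemma~\ref{lemma_operator} flavour) of $h\mapsto(\tilde G+A_hA_h^*)^{-1}A_h$ and the bound $\|y^{\delta_{n_j}}-y\|\leq\delta_{n_j}$ then permit passing to the limit in the stopping inequality $s_K^{h_{n_j},\delta_{n_j}}\leq\tau(\delta_{n_j}+h_{n_j}C^\dagger)$, delivering $\|(\tilde G+AA^*)^{-1}(y-Ax_K)\|_\mathcal{Y}=0$. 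Injectivity of $(\tilde G+AA^*)^{-1}$ forces $Ax_K=y$ and the injectivity of $A$ then identifies $x_K=x^\dagger$. \emph{(ii)} If instead $k^*_n\to\infty$, the decomposition
\begin{equation*}
\|x_{k^*_n}^{h_n,\delta_n}-x^\dagger\|\leq C_4(h_n+\delta_n)k^*_n+\|x_{k^*_n}-x^\dagger\|
\end{equation*}
reduces the problem to showing $(h_n+\delta_n)k^*_n\to 0$, since its second summand vanishes by Corollary~\ref{cor:noiseFree}.

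\textbf{Main obstacle.} The genuinely delicate point is proving $(h_n+\delta_n)k^*_n\to 0$ in case (ii). I would attack it through the strict lower bound $s_{k^*_n-1}>\tau(\delta_n+h_nC^\dagger)$ guaranteed by (\ref{Method1discrepancy}), splitting
\begin{equation*}
s_{k^*_n-1}\leq \sigma_{k^*_n-1}+C_0(h_n+\delta_n)+C_1(h_n+\delta_n)(k^*_n-1),
\end{equation*}
where $\sigma_k:=\|(\tilde G+AA^*)^{-1}(y-Ax_k)\|_\mathcal{Y}$ is the exact preconditioned residual and $C_0,C_1$ are supplied by Lemma~\ref{lemma_operator}, Proposition~\ref{estimateNoisyMethod1}, and the uniform boundedness of $\{x_k\}$. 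The spectral expansion (\ref{ProofSpectral1}), applied now to $A$ and its eigenvalues, gives the monotone decay $\sigma_k\to 0$, and the gap $\tau-\mu^{-1}>0$ together with the freedom in the choice of $C^\dagger$ leaves just enough slack to extract the required $o(1)$ decay of $(h_n+\delta_n)k^*_n$. Once this is in hand, the conclusion is immediate from the spectral estimate (\ref{PfIneq0}), exactly as in the proof of Theorem~\ref{reguThm}.
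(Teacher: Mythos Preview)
Your scheme has a real gap in both halves that trace back to the same missing ingredient.

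\textbf{Finiteness.} Your bound reads
\[
s_{k(\epsilon)}\ \le\ \mu^{-1}(\delta+hC^\dagger)\;+\;\tfrac{1}{2\sqrt\mu}\bigl(\epsilon+C_4(h+\delta)k(\epsilon)\bigr),
\]
and you want this $\le\tau(\delta+hC^\dagger)$. But the right-hand side of the target tends to $0$ with $(h,\delta)$, while $\epsilon/(2\sqrt\mu)$ is a fixed positive number; making $(h,\delta)$ small only worsens the comparison. If you instead let $\epsilon$ depend on $(h,\delta)$, then $k(\epsilon)\to\infty$ and the term $C_4(h+\delta)k(\epsilon)$ is uncontrolled: without a source condition the noise-free error $\|x_k-x^\dagger\|$ can decay arbitrarily slowly, so $\min_k\{\|x_k-x^\dagger\|+C_4(h+\delta)k\}$ is in general much larger than $O(h+\delta)$. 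So the argument does not establish $k^*<\infty$.

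\textbf{Case (ii).} Your ``main obstacle'' plan inverts the inequality. Splitting $s_{k^*_n-1}$ and using $s_{k^*_n-1}>\tau(\delta_n+h_nC^\dagger)$ yields a \emph{lower} bound on $(h_n+\delta_n)k^*_n$, not an upper bound; there is no ``slack'' from $\tau-\mu^{-1}>0$ that flips this. In fact, for the discrepancy principle here one cannot expect $(h+\delta)k^*\to 0$ without smoothness assumptions, so the route via Proposition~\ref{estimateNoisyMethod1} plus Corollary~\ref{cor:noiseFree} is blocked.

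The paper avoids both difficulties by first proving a F\'ejer-type monotonicity: using the identity $z^{h,\delta}_{k+1}-|z^{h,\delta}_k|=2(G+A_h^*A_h)^{-1}A_h^*(y^\delta-A_h|z^{h,\delta}_k|)$ together with (\ref{IneqG}) and $\|y^\delta-A_hx^\dagger\|\le\delta+hC^\dagger$, one obtains
\[
\|z^{h,\delta}_{k+1}-x^\dagger\|^2-\|z^{h,\delta}_{k}-x^\dagger\|^2\ \le\ 4\,s_k\bigl(\delta+hC^\dagger-\mu\,s_k\bigr),
\]
which is strictly negative for every $k<k^*$ since $s_k>\tau(\delta+hC^\dagger)$ and $\tau\mu>1$. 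Summing this gives $k^*\le\|x_0-x^\dagger\|^2/\bigl(4\tau(\tau\mu-1)(\delta+hC^\dagger)^2\bigr)<\infty$, settling finiteness. For case~(ii) the same monotonicity lets you step back from $k^*_n$ to any fixed intermediate index $k_m$:
\[
\|z^{h_n,\delta_n}_{k^*_n}-x^\dagger\|\ \le\ \|z^{h_n,\delta_n}_{k_m}-x^\dagger\|\ \le\ \|z^{h_n,\delta_n}_{k_m}-z_{k_m}\|+\|z_{k_m}-x^\dagger\|.
\]
Now choose $m$ large so that the second summand is small (Corollary~\ref{cor:noiseFree}); with $k_m$ fixed, the first summand $\to 0$ as $n\to\infty$ by Proposition~\ref{estimateNoisyMethod1}. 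No control of $(h_n+\delta_n)k^*_n$ is needed. Your case~(i) treatment is fine and matches the paper; the missing piece throughout is precisely this energy/monotonicity inequality.
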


\begin{proof}
Let $(h_n, \delta_n)$, $n=1,2,\cdots$, be a sequence converging to $(0,0)$ as $n\to \infty$, and let $A_n:=A_{h_n}$ and $y^n:=y^{\delta_n}$ be the corresponding sequences of perturbed forward operator and data. For each quaternion $(h_n, \delta_n, A_n, y^n)$, denote by $k_n=k^*(h_n, \delta_n, A_n, y^n)$ the corresponding stopping index determined from the discrepancy principle (\ref{Method1discrepancy}). In order to prove the convergence of $x^{h_n,\delta_n}_{k_n}$, we follow the idea, proposed in \cite[Theorem 2.4]{HankeNeubauerScherzer1995} for nonlinear ill-posed problems, and distinguish two cases: (i) $k_n$ is finite for all $h,\delta> 0$, and (ii) $k_n\to \infty$ when $n\to \infty$.

We start with case (i).  Let $\tilde{k}$ be a finite accumulation point of $k_n$. Then, there exists a constant $n_0$ such that for all $n\geq n_0$: $k_n=\tilde{k}$. From the definition of $k_n$ it follows that:
\begin{equation}\label{PfposterioriIneq1}
\|(\tilde{G} + A_n A^*_n)^{-1} (A_n |z^{h_n,\delta_n}_{\tilde{k}}|-y_n)\|_{\mathcal{Y}}  \leq \tilde{k} \tau(h_n+\delta_n).
\end{equation}
Since $z^{h,\delta}_{\tilde{k}}$ depends continuously on $(A_h, y^\delta)$ as $\tilde{k}$ is fixed now, we also have:
\begin{equation}\label{PfposterioriIneq2}
z^{h_n,\delta_n}_{\tilde{k}} \to z_{\tilde{k}} \textrm{~and~} A_n |z^{h_n,\delta_n}_{\tilde{k}}| \to A |z_{\tilde{k}}| \quad \textrm{~as~} n\to \infty.
\end{equation}
Taking the limit in (\ref{PfposterioriIneq1}) yields $A x_{\tilde{k}} = A |z_{\tilde{k}}| =y$. Thus, $x_{\tilde{k}}$ is a solution of (\ref{OperatorEq}), and with (\ref{PfposterioriIneq2}), we obtain $x^{h_n,\delta_n}_{k_n} \to x_{\tilde{k}}$ as $n\to\infty$.

Now, let's consider case (ii). Without loss of generality, we assume that $k_n$ increases monotonically with $n$. Then, by using (\ref{IneqG}) and the inequality $z^{h,\delta}_{k+1}- |z^{h,\delta}_{k}| = 2(G + A^*_h A_h)^{-1}  A^*_h( y^\delta - A_h |z^{h,\delta}_{k}|)$, we deduce together with the positivity of $x^\dagger$ that.
\begin{equation*}
\begin{array}{ll}
& \|z^{h,\delta}_{k+1}- x^\dagger\|^2 - \|z^{h,\delta}_{k}- x^\dagger\|^2 \\
&  \leq \|z^{h,\delta}_{k+1}- x^\dagger\|^2 - \||z^{h,\delta}_{k}|- x^\dagger\|^2 = 2(|z^{h,\delta}_{k}|-x^\dagger, z^{h,\delta}_{k+1}- |z^{h,\delta}_{k}|) + \|z^{h,\delta}_{k+1}-|z^{h,\delta}_{k}|\|^2 \\
&  = 4 \left( |z^{h,\delta}_{k}|-x^\dagger, (G + A^*_h A_h)^{-1}  A^*_h( y^\delta - A_h |z^{h,\delta}_{k}|) \right) \\
& \qquad + 4 \|(G + A^*_h A_h)^{-1}  A^*_h( y^\delta - A_h |z^{h,\delta}_{k}|)\|^2 \\
&  = 4 \left( A_h (|z^{h,\delta}_{k}|-x^\dagger) , (\tilde{G} + A_h A^*_h)^{-1} (y^\delta - A_h |z^{h,\delta}_{k}| ) \right) \\
& \qquad  + 4\|(G + A^*_h A_h)^{-1}  A^*_h( y^\delta - A_h |z^{h,\delta}_{k}|)\|^2  \\
&  =  4\left( (\tilde{G} + A_h A^*_h)^{-1} (y^\delta - A_h |z^{h,\delta}_{k}| ) ,  y^\delta -A_h x^\dagger \right) \\
&  \qquad - 4 \left( (\tilde{G} + A_h A^*_h)^{-1} (y^\delta - A_h |z^{h,\delta}_{k}| ) , \tilde{G} (\tilde{G} + A_h A^*_h)^{-1} (y^\delta - A_h |z^{h,\delta}_{k}| \right)  \\
&  \leq  4 \| (\tilde{G} + A_h A^*_h)^{-1} (y^\delta - A_h |z^{h,\delta}_{k}| )\|  \|y^\delta -A_h x^\dagger \|  \\
&  \qquad - 4\mu \| (\tilde{G} + A_h A^*_h)^{-1} (y^\delta - A_h |z^{h,\delta}_{k}| )\|^2,
\end{array}
\end{equation*}
which implies together with the inequality $\|y^\delta -A_h x^\dagger\|\leq \delta + h C^\dagger$ that,
\begin{equation}\label{PfposterioriIneq3}
\begin{array}{ll}
& \|z^{h,\delta}_{k+1}- x^\dagger\|^2 \leq \|z^{h,\delta}_{k}- x^\dagger\|^2 + 4 \|(\tilde{G} + A_h A^*_h)^{-1} ( y^\delta - A_h |z^{h,\delta}_{k}|)\|_{\mathcal{Y}}  \\
& \qquad\qquad\qquad\quad  \cdot \left\{ \delta + h C^\dagger - \mu \|(\tilde{G} + A_h A^*_h)^{-1} ( y^\delta - A_h |z^{h,\delta}_{k}|)\|_{\mathcal{Y}} \right\},
\end{array}
\end{equation}
By the choice of $k^*$ in (\ref{Method1discrepancy}) and the assumption that $k^*\to \infty$ as $h,\delta\to0$, we deduce that for $n>m$,
\begin{equation}\label{PfIneqPosteriori}
\|z^{h_n,\delta_n}_{k_n}- x^\dagger\| \leq \|z^{h_n,\delta_n}_{k_{n-1}}- x^\dagger\| \leq \cdots \leq \|z^{h_n,\delta_n}_{k_m}- z_{k_m}\| + \|z_{k_m}- x^\dagger\|.
\end{equation}
From Corollary \ref{cor:noiseFree} we deduce that we can fix $m$ to be so large that the last term on the right-hand side of (\ref{PfIneqPosteriori}) is sufficiently close to zero; now that $k_m$ is fixed, we can apply (\ref{PfposterioriIneq2}) to conclude that the left-hand side of (\ref{PfIneqPosteriori}) must go to zero when $n\to\infty$.

Finally, let's show the stopping rule (\ref{Method1discrepancy}) is well-posed, i.e. $k^*<\infty$ for any fixed $(h,\delta)\in \mathbb{R}_+\times\mathbb{R}_+$. Otherwise, there exists a pair of positive numbers $(h,\delta)$ such that for all $k$,
\begin{equation}\label{Method1discrepancy2}
\|(\tilde{G} + A_h A^*_h)^{-1} ( y^\delta - A_h |z^{h,\delta}_{k}|)\|_{\mathcal{Y}} > \tau(\delta + h C^\dagger).
\end{equation}

Combine (\ref{PfposterioriIneq3}) and (\ref{Method1discrepancy2}) to obtain that for $k>\frac{\|x_{0}- x^\dagger\|^2 }{4 \tau (\tau\mu-1)(\delta + h C^\dagger)^2}$,
\begin{equation*}\label{PfposterioriIneq4}
\begin{array}{ll}
& \|z^{h,\delta}_{k}- x^\dagger\|^2 \leq \|z^{h,\delta}_{k-1}- x^\dagger\|^2 - 4 \tau (\tau\mu-1)(\delta + h C^\dagger)^2   \\ & \qquad \leq \cdots \leq \|x_{0}- x^\dagger\|^2 - 4 k \tau (\tau\mu-1)(\delta + h C^\dagger)^2<0,
\end{array}
\end{equation*}
which contradicts the assumption (\ref{Method1discrepancy2}).

\end{proof}

\begin{remark}
According to the proof of Theorem \ref{reguThmPosteriori}, we know that the strong convergence of Algorithm 1 can be obtained according to the stopping rule (\ref{Method1discrepancy}) without assumptions on the injectivity of forward operators. These facts will be emphasized in Section 4 (cf. Theorem \ref{ThmRegu1}) for our second approach.
\end{remark}

\subsection{Convergence rates under the a priori stopping rule}

By the proof of Theorem \ref{reguThm}, cf. (\ref{PfIneq0}), we know that the regularization error of Algorithm 1 can be controlled by an explicit linear formula, which allows us to derive convergence rate results by using the general linear regularization theory. The goal in this subsection is to realize this idea. For investigating the convergence rates of ill-posed problems, additional smoothness assumptions on the exact unknown solution $x^\dagger$ in correspondence with the forward operator and the regularization method under consideration should be satisfied. Otherwise, the rate of convergence of $x^{h,\delta}_{k^*(h,\delta)} \to x^\dagger$ as $\delta,h\to0$ might be arbitrarily slow (cf.~\cite{Schock}). In this work, we consider the following range-type source condition
\begin{equation}\label{SC}
x_0-x^\dagger = \varphi(A^*A) v,
\end{equation}
where $\varphi: \mathbb{R}_+ \to \mathbb{R}_+$ is a index function\footnote{A function $\varphi: \mathbb{R}_+ \to \mathbb{R}_+$ is called an index function if it is continuous, strictly increasing and satisfies $\lim_{\lambda\to 0+} \varphi(\lambda)=0$.}. The concept of index function extends the conventional H\"older-type source condition of exact solutions to a more general source condition, including the logarithmic source conditions for exponential ill-posed problems.  This terminology has been frequently used recently for studying convergence rate results, see e.g. \cite{ZhangHof2018,ZhangHof2019}.

Let's first consider the power-type source conditions, i.e.
\begin{equation}\label{HolderSource1}
\varphi_p(\lambda)=\lambda^p, \quad p>0.
\end{equation}
For the concision of the discussion, we introduce:
\begin{equation}\label{specFuns}
g_k(\lambda):= \left( \frac{\mu - \lambda}{\mu + \lambda} \right)^k,
\end{equation}
and for the compact operator $A^*A$ with singular system $\{\lambda_j; u_j\}^\infty_{j=1}$, we define: (see \cite[\S~34]{Helmberg1969} for a more rigorous definition for general continuous function $g_k(\lambda)$ and arbitrary bounded self-adjoint operators)
\begin{equation}\label{specFuns}
g_k(A^*A)x:= \sum^\infty_{j=1} \left(\frac{\mu - \lambda_j}{\mu + \lambda_j} \right)^k \langle x, u_j\rangle  u_j, \qquad \forall x\in L^2(\Omega).
\end{equation}

The following lemma indicates that function $g_k$ can be viewed as a \emph{qualification} of regularization methods \cite{Mathe-2003,ZhangHof2018,ZhangHof2019}.

\begin{lemma}\label{Lemma_qualification1}
There exists a positive constant $\gamma_1$ such that:
\begin{equation}\label{qualification1}
\sup_{\lambda\in(0,\|A^*A\|]} \varphi_p(\lambda) g_k(\lambda) \leq \gamma_1 \varphi_p(k^{-1}).
\end{equation}
\end{lemma}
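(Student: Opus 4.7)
The plan is to bound the supremum separately on two subintervals of $(0,\|A^*A\|]$ determined by the position of $\lambda$ relative to $\mu$, and on each piece to upper-estimate $|g_k(\lambda)|$ by a quantity that is easy to maximize against the polynomial weight $\lambda^p$. Since $g_k$ may take negative values when $\lambda>\mu$ and $k$ is odd, I will in fact bound $\sup_\lambda \lambda^p |g_k(\lambda)|$, which dominates the quantity in (\ref{qualification1}).

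\medskip

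Step 1 (the band $0<\lambda\le\mu$). Here $0\le g_k(\lambda)<1$, and the elementary inequality $\ln(1-t)\le -t$ applied to $t=\tfrac{2\lambda}{\mu+\lambda}\in[0,1)$ yields
\[
g_k(\lambda)=\Bigl(1-\tfrac{2\lambda}{\mu+\lambda}\Bigr)^{k}\le \exp\!\Bigl(-\tfrac{2k\lambda}{\mu+\lambda}\Bigr)\le \exp\!\Bigl(-\tfrac{k\lambda}{\mu}\Bigr),
\]
the last step using $\tfrac{2\lambda}{\mu+\lambda}\ge\tfrac{\lambda}{\mu}$ on $(0,\mu]$. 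A standard one-variable calculus check shows that $\lambda\mapsto \lambda^{p}\exp(-k\lambda/\mu)$ is maximized on $\mathbb{R}_+$ at $\lambda^*=p\mu/k$, with maximum value $(p\mu/e)^{p}\, k^{-p}$. For $k\ge p$ this critical point lies in $(0,\mu]$, so
\[
\sup_{\lambda\in(0,\mu]} \lambda^p g_k(\lambda)\le (p\mu/e)^{p}\, k^{-p}.
\]
For the finitely many indices $k<p$ the trivial bound $\lambda^p g_k(\lambda)\le \mu^p$ is absorbed into the constant.

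\medskip

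Step 2 (the band $\mu<\lambda\le L:=\|A^*A\|$, if non-empty). Here $\bigl|\tfrac{\mu-\lambda}{\mu+\lambda}\bigr|=\tfrac{\lambda-\mu}{\lambda+\mu}$ is increasing in $\lambda$, hence
\[
|g_k(\lambda)|\le q^{k},\qquad q:=\tfrac{L-\mu}{L+\mu}<1.
\]
Consequently $\lambda^p |g_k(\lambda)|\le L^p q^{k}$. Because geometric decay beats any polynomial, there is a constant $c_p>0$ with $q^{k}\le c_p\, k^{-p}$ for all $k\ge 1$, so
\[
\sup_{\lambda\in(\mu,L]} \lambda^p |g_k(\lambda)|\le L^p c_p\, k^{-p}.
\]
If $L\le\mu$ this band is empty and Step 2 is vacuous.

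\medskip

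Step 3 (conclusion). Setting $\gamma_1$ equal to the larger of the two constants produced in Steps 1 and 2 (inflated if necessary to cover the initial indices $k<p$ by the trivial bound $\mu^p$), one obtains (\ref{qualification1}) with $\varphi_p(k^{-1})=k^{-p}$. The dependence of $\gamma_1$ on $p$, $\mu$, and $\|A^*A\|$ is explicit.

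\medskip

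I do not expect any serious obstacle; the main subtlety is only the sign of $g_k$ for $\lambda>\mu$, which is handled by estimating the absolute value. All other work is the elementary maximization of $\lambda^p e^{-k\lambda/\mu}$ in Step 1 and the observation that $q^k = o(k^{-p})$ in Step 2.
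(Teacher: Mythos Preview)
Your argument is correct. The two-band decomposition with the exponential majorant $g_k(\lambda)\le e^{-k\lambda/\mu}$ on $(0,\mu]$ and the uniform geometric bound $|g_k(\lambda)|\le q^k$ on $(\mu,L]$ is clean and complete; in particular your handling of the region $\lambda>\mu$ (and of the sign of $g_k$ there) is careful.

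The paper proceeds differently: it locates the exact critical point of $\lambda\mapsto\lambda^p\bigl(\tfrac{\mu-\lambda}{\mu+\lambda}\bigr)^k$ on $(0,\mu)$ by solving the first-order condition, obtaining $\lambda^*=\tfrac{p\mu}{k+\sqrt{p^2+k^2}}$, and then bounds the value at $\lambda^*$ by $(p\mu/2)^p k^{-p}$ using $k+\sqrt{p^2+k^2}\ge 2k$ and $\tfrac{\mu-\lambda^*}{\mu+\lambda^*}<1$. This yields a slightly sharper explicit constant on the dominant band than your $(p\mu/e)^p$, but the paper does not explicitly discuss the band $\lambda>\mu$ (relevant when $\|A^*A\|>\mu$ and $k$ is even), which you do treat. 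Your route trades the exact maximizer for an elementary exponential bound and a separate geometric-decay argument; the paper's route is a one-line calculus computation. Both reach the same $O(k^{-p})$ conclusion, and your version is arguably more self-contained.
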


\begin{proof}
The lemma follows from the following inequalities:
\begin{equation*}
\begin{array}{ll}
& \lambda^p \left( \frac{\mu - \lambda}{\mu + \lambda} \right)^k \leq_{\lambda= \frac{p\mu}{k+ \sqrt{p^2+k^2}}} \left( \frac{p\mu}{k+ \sqrt{p^2+k^2}} \right)^p \left( \frac{k+ \sqrt{p^2+k^2}-p}{k+ \sqrt{p^2+k^2}+p} \right)^k \leq \left( \frac{p\mu}{2} \right)^p k^{-p}.
\end{array}
\end{equation*}
\end{proof}

Before discussing the convergence rates results, we need the following lemma, which can be obtained by Lemma \ref{lemma_operator} and \cite[\S~4.1, Lemma 1.1]{Vainikko1986}.

\begin{lemma}\label{Lemma_SC1Err}
Let $A$ and $A_h$ satisfy the inequality (\ref{ModelError}). Then, there exists a positive number $C_5=C_5(A,p,\|v\|)$ such that:
\begin{equation}\label{SC1ErrIneq}
\| (A^*_h A_h)^p v - (A^*A)^p v \|\leq C_5 h^{\min(1,p)}.
\end{equation}
\end{lemma}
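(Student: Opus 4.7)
The plan is to reduce the $p$-th power perturbation to the elementary linear perturbation bound $\|A_h^*A_h - A^*A\| = O(h)$ and then quote Vainikko's functional-calculus lemma to transport this linear bound through the power $p$.

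First I would establish the linear bound. Writing
\[
A_h^*A_h - A^*A \;=\; A_h^*(A_h - A) \,+\, (A_h - A)^*A,
\]
taking norms, and applying (\ref{ModelError}) together with the uniform estimate $\|A_h\| \leq \|A\| + h_0$ (valid for $h \in (0, h_0]$), yields
\[
\|A_h^*A_h - A^*A\| \;\leq\; \bigl(2\|A\| + h_0\bigr)\,h \;=:\; C_A\,h,
\]
with $C_A$ depending only on $A$ and $h_0$. (Lemma \ref{lemma_operator} is a convenient alternative source of this same conclusion after unpacking the preconditioned expressions for the special choice $G = \mu I$.)

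Second, I would apply Vainikko's Lemma 1.1 in \S 4.1 of \cite{Vainikko1986}, which asserts that for any two bounded non-negative self-adjoint operators $T, S$ on a Hilbert space with $\|T\|, \|S\| \leq M$ and every $p > 0$,
\[
\|T^p v - S^p v\| \;\leq\; K(p, M)\,\|T - S\|^{\min(1,p)}\,\|v\|.
\]
Setting $T := A_h^*A_h$, $S := A^*A$, and $M := (\|A\| + h_0)^2$, and combining with the linear bound from Step 1, gives the desired estimate with constant
\[
C_5 \;:=\; K(p, M)\,C_A^{\min(1,p)}\,\|v\|,
\]
which depends only on $A$, $p$, and $\|v\|$ as advertised.

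The main obstacle is packaged inside the cited Vainikko lemma, whose proof splits at $p = 1$: for $p \in (0, 1]$ it reduces to the Löwner--Heinz operator monotonicity inequality $\|T^p - S^p\| \leq \|T - S\|^p$, whereas for $p > 1$ one needs a more delicate telescoping or Dunford integral representation argument to recover a genuinely \emph{linear} bound in $\|T - S\|$ with a constant controlled by $M^{p-1}$. Since we invoke the lemma as a black box, the only remaining care in our proof is tracking how the constants $C_A$, $K(p, M)$, and $\|v\|$ combine into the single constant $C_5$, and verifying that $M$ can be chosen as an $h$-independent upper bound on the spectra of both $A_h^*A_h$ and $A^*A$ for all $h \in (0, h_0]$.
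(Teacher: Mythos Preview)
Your proposal is correct and matches the paper's own argument, which simply states that the lemma ``can be obtained by Lemma~\ref{lemma_operator} and \cite[\S4.1, Lemma~1.1]{Vainikko1986}'': the linear perturbation bound $\|A_h^*A_h - A^*A\| \le (2\|A\| + h)h$ is exactly the first step in the appendix proof of Lemma~\ref{lemma_operator}, and the passage to fractional powers is delegated to Vainikko's lemma in both accounts. Your only embellishment is spelling out the telescoping identity for $A_h^*A_h - A^*A$ and tracking the constants explicitly, which the paper leaves implicit.
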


\begin{theorem}\label{reguThm_priori1}
Let $(x^{h,\delta}_{k})^\infty_{k=1}\subset L^2_+(\Omega)$ be the sequence generated by Algorithm 1. Then, under the source condition (\ref{HolderSource1}), if the iterative number is chosen by $k^*\sim (h^{\min(1,p)}+\delta)^{-1/(p+1)}$~\footnote{$k\sim f(h,\delta)$ means that $k= \tilde{C} \cdot f(h,\delta)$ with some positive number $\tilde{C}$.}, we have the convergence rate:
\begin{equation*}\label{prioriRate1}
\|x^{h,\delta}_{k^*}-x^\dagger\| = \mathcal{O}\left( (h^{\min(1,p)}+\delta)^{\frac{p}{p+1}} \right) \textrm{~as~} h,\delta\to 0.
\end{equation*}
\end{theorem}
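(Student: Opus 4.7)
The plan is to start from the master estimate (\ref{pfReguIneq1}) already established in the proof of Theorem \ref{reguThm}, which cleanly splits the total error as
\begin{equation*}
\|x^{h,\delta}_{k}-x^\dagger\| \leq \|T_h^k(x_0-x^\dagger)\| + k\left(C_3 h + 2\|G\|^{-1/2}\delta\right),
\end{equation*}
where $T_h := (G+A_h^*A_h)^{-1}(G-A_h^*A_h)$. Under the source condition (\ref{HolderSource1}) we have $x_0-x^\dagger=(A^*A)^p v$, so the task reduces to bounding $\|T_h^k(A^*A)^p v\|$ in terms of $k$ and $h$, after which a standard balancing argument in $k$ yields the stated rate.

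For the approximation term, I would introduce the splitting
\begin{equation*}
T_h^k(A^*A)^p v = T_h^k(A_h^*A_h)^p v + T_h^k\bigl[(A^*A)^p-(A_h^*A_h)^p\bigr] v,
\end{equation*}
so as to isolate the piece in which $T_h$ and the source operator commute. For the first summand, the spectral functional calculus applies because both $T_h$ and $(A_h^*A_h)^p$ are functions of $A_h^*A_h$; hence Lemma \ref{Lemma_qualification1}, applied to the eigensystem of $A_h^*A_h$ rather than $A^*A$, gives $\|T_h^k(A_h^*A_h)^p v\| \leq \gamma_1 k^{-p}\|v\|$. For the second summand, the elementary bound $\|T_h^k\|\le 1$ (since $|(\mu-\lambda)/(\mu+\lambda)|\le 1$ for $\lambda\ge 0$) combined with Lemma \ref{Lemma_SC1Err} yields a remainder of order $h^{\min(1,p)}$.

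Collecting these estimates leads to a total error bound of the form $C(k^{-p} + h^{\min(1,p)} + k h + k\delta)$. Setting $\eta := h^{\min(1,p)} + \delta$ and using $h\le h^{\min(1,p)}$ for $h\le 1$, this simplifies to $C(k^{-p}+k\eta)$, at which point elementary minimization in $k$ produces the balanced choice $k^*\sim\eta^{-1/(p+1)}$ and the claimed rate $\|x^{h,\delta}_{k^*}-x^\dagger\|=\mathcal{O}(\eta^{p/(p+1)})$.

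The main subtlety I anticipate is the commutativity step: the qualification estimate of Lemma \ref{Lemma_qualification1} requires $T_h^k$ and the source-type operator to be simultaneously diagonalizable, so it would be illegitimate to apply it directly to $T_h^k(A^*A)^p$, since $T_h$ is built from $A_h^*A_h$ rather than $A^*A$. The trick of first exchanging $A$ for $A_h$ inside the source representation and then absorbing the resulting discrepancy into the $h^{\min(1,p)}$ term via Lemma \ref{Lemma_SC1Err} is what makes the argument go through; without it, perturbation of the forward operator would not enter the convergence rate at the correct order.
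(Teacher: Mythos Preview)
Your proposal is correct and follows essentially the same route as the paper: you both start from the master estimate (\ref{pfReguIneq1}), swap $(A^*A)^p$ for $(A_h^*A_h)^p$ inside the source representation so that the qualification Lemma~\ref{Lemma_qualification1} applies via the spectral calculus of $A_h^*A_h$, control the swap error by Lemma~\ref{Lemma_SC1Err} together with $\|T_h^k\|\le 1$, and then balance $k^{-p}$ against $k\,(h^{\min(1,p)}+\delta)$. Your explicit remark that $h\le h^{\min(1,p)}$ for $h\le 1$ is exactly the inequality the paper uses implicitly to absorb the $kh$ term into $k\eta$.
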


\begin{proof}
It follows from Theorem \ref{reguThm}, Lemmas \ref{Lemma_qualification1} and \ref{Lemma_SC1Err} and source condition (\ref{HolderSource1}) that:
\begin{equation}\label{reguThm_priori1ProofIneq}
\begin{array}{ll}
& \hspace{-2mm} \|x^{h,\delta}_{k}-x^\dagger\| \\ &
\hspace{-2mm} \leq \left\| g_k(A^*_hA_h) \left\{ (A^*_hA_h)^p v + [(A^*A)^p -(A^*_hA_h)^p] v \right\} \right\| + k \left( C_3 h + \frac{2}{\sqrt{\|G\|}} \delta \right)
\\ & \hspace{-2mm} \leq \left\| g_k(A^*_hA_h) (A^*_hA_h)^p v \right\| + \left\| [(A^*A)^p -(A^*_hA_h)^p] v \right\|  + k \left( C_3 h + \frac{2}{\sqrt{\|G\|}} \delta \right)
\\ & \hspace{-2mm} \leq \gamma_1 k^{-p} + C_5 h^{\min(1,p)}+ k \left( C_3 h + \frac{2}{\sqrt{\|G\|}} \delta \right),
\end{array}
\end{equation}
which yields the required convergence rate by using the definition of $k^*$.
\end{proof}

Now, let's consider the logarithmic source conditions, i.e.
\begin{equation}\label{logarithmicQualification}
\varphi_{\nu}(\lambda) = \left\{\begin{array}{ll}
\log^{-\nu}(1/\lambda) \qquad \textrm{~for} \quad 0< \lambda  \le e^{-2\nu-1},  \\
(2\nu+1)^{-\nu-0.5} \sqrt{2\nu e^{2\nu+1} \lambda + 1} \quad \textrm{~for~}  \lambda > e^{-2\nu-1},
\end{array}\right.
\end{equation}
where $\nu>0$ is a fixed number. It is not difficult to verify that $\varphi_{\nu}$ is a concave index function for all exponents $\nu>0$, and the following inequality holds for small enough $h>0$:
\begin{equation}\label{SC2}
| \varphi_{\nu}(\lambda + \zeta) - \varphi_{\nu}(\lambda) | \leq C_{\nu} \varphi_{\nu}(|\zeta|) \textrm{~for all~} \lambda\in (0, \|A\|^2], \zeta\in [-h, h],
\end{equation}
where $C_{\nu}$ is a positive constant.

If a benchmark source condition $\varphi$ is satisfies inequality (\ref{qualification1}) in Lemma \ref{Lemma_qualification1}, then other index functions $\psi$ also satisfy inequality (\ref{qualification1}) whenever they are {\it covered} by $\varphi$,
and we refer to \cite[Def.~2]{Mathe-2003} for the following definition, which is originally introduced for the {\it qualification} of a class of linear regularization methods.

\begin{definition}
\label{def:covered}
Let $\varphi(\lambda)\;(0<\lambda\le \|A^*A\|)$ be an index function. Then an index function $\psi(\lambda)\;(0<\lambda\le \|A^*A\|)$ is said to be covered by $\varphi$ if there is $\underline c>0$ such that:
 \begin{equation*}\label{QualificationCover}
\underline c\,\frac{\varphi(\alpha)}{\psi(\alpha)} \le \inf \limits _{\alpha \le \lambda \le \|A^*A\|} \frac{\varphi(\lambda)}{\psi(\lambda)} \qquad (0 < \alpha \le \bar \alpha).
\end{equation*}
\end{definition}

Similar to \cite[Prop.~3, Remark~5 and Lemma~2]{Mathe-2003}, the following proposition holds:

\begin{proposition} \label{pro:covered}
The index function $\psi$ satisfies the inequality (\ref{qualification1}) if $\psi$ is covered by $\varphi$ and if $\varphi$ satisfies the inequality (\ref{qualification1}). If the quotient function $\lambda \mapsto  \frac{\varphi(\lambda)}{\psi(\lambda)}$ is increasing for $0<\lambda \le \bar \lambda$ and some $\bar \lambda>0$, then $\psi$ is covered by $\varphi$. If, in particular, the index function $\psi(\lambda)$ is concave for
$0<\lambda \le \bar \lambda$, then $\psi$ is covered by $\varphi(\lambda)=\lambda$.
\end{proposition}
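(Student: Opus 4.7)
The plan is to establish the three assertions of Proposition~\ref{pro:covered} sequentially: the first transfers the qualification inequality from $\varphi$ to $\psi$ using the covering bound; the second derives covering from the assumed monotonicity of the quotient $\varphi/\psi$; the third specializes to $\varphi(\lambda)=\lambda$ when $\psi$ is concave.

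For assertion one, I would split the supremum in \eqref{qualification1} at $\lambda=k^{-1}$, assuming $k$ is large enough that $k^{-1}\le\bar\alpha$ so that the covering hypothesis applies at $\alpha=k^{-1}$. On the piece $\lambda\in(0,k^{-1}]$, the non-decreasing nature of the index function $\psi$ and the elementary bound $g_k(\lambda)\le 1$ (since $(\mu-\lambda)/(\mu+\lambda)\in[-1,1]$) give $\psi(\lambda)g_k(\lambda)\le\psi(k^{-1})$. On the piece $\lambda\in[k^{-1},\|A^*A\|]$, the covering condition rearranges to
\begin{equation*}
\psi(\lambda)\le\underline c^{-1}\,\frac{\psi(k^{-1})}{\varphi(k^{-1})}\,\varphi(\lambda),
\end{equation*}
so multiplying by $g_k(\lambda)$, taking the supremum and applying the qualification hypothesis for $\varphi$ yields the bound $\underline c^{-1}\gamma_1\psi(k^{-1})$. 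Combining both pieces gives \eqref{qualification1} for $\psi$ with constant $\max(1,\gamma_1/\underline c)$; the finitely many indices $k$ with $k^{-1}>\bar\alpha$ are absorbed by enlarging the constant, since both sides of \eqref{qualification1} are bounded there.

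For assertion two, I want the covering inequality $\underline c\,\varphi(\alpha)/\psi(\alpha)\le\inf_{\alpha\le\lambda\le\|A^*A\|}\varphi(\lambda)/\psi(\lambda)$ for all sufficiently small $\alpha$, and I would split the infimum at $\bar\lambda$. On $[\alpha,\bar\lambda]$, the assumed monotonicity of $\varphi/\psi$ gives $\varphi(\alpha)/\psi(\alpha)\le\varphi(\lambda)/\psi(\lambda)$. On $[\bar\lambda,\|A^*A\|]$, continuity and positivity of $\varphi/\psi$ on a compact interval yield a positive minimum $m$, while monotonicity on $(0,\bar\lambda]$ bounds $\varphi(\alpha)/\psi(\alpha)$ above by $M:=\varphi(\bar\lambda)/\psi(\bar\lambda)$. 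Setting $\underline c:=\min(1,m/M)>0$ then makes $\underline c\,\varphi(\alpha)/\psi(\alpha)\le\min(\varphi(\alpha)/\psi(\alpha),m)$, which dominates the infimum over the full range.

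The third assertion then falls out of the second: for a concave index function $\psi$ extended by $\psi(0)=0$, the one-line computation $\psi(s)=\psi\bigl(\tfrac{s}{t}\,t+(1-\tfrac{s}{t})\cdot 0\bigr)\ge\tfrac{s}{t}\psi(t)$ for $0<s<t\le\bar\lambda$ shows that $\psi(\lambda)/\lambda$ is non-increasing, so $\lambda/\psi(\lambda)=\varphi(\lambda)/\psi(\lambda)$ is non-decreasing on $(0,\bar\lambda]$, and applying assertion two with $\varphi(\lambda)=\lambda$ closes the argument. The only mildly subtle step is the treatment of the small-$k$ regime in assertion one, where the covering hypothesis is not directly available; this is not a real obstruction, as it concerns only a finite range that can be absorbed into the final constant.
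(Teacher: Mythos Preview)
Your proposal is correct. The paper itself does not supply a proof of Proposition~\ref{pro:covered} but simply refers the reader to \cite[Prop.~3, Remark~5 and Lemma~2]{Mathe-2003}; your argument reconstructs precisely those standard steps---the splitting of the supremum at $\lambda=k^{-1}$ for the first assertion, the compactness argument on $[\bar\lambda,\|A^*A\|]$ for the second, and the secant-slope monotonicity for concave functions in the third---so there is no substantive difference to discuss.
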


\begin{theorem}\label{reguThm_Logpriori2}
Let $(x^{h,\delta}_{k})^\infty_{k=1}\subset L^2_+(\Omega)$ be the sequence generated by Algorithm 1. Then, under the logarithmic source condition (\ref{logarithmicQualification}), if the iterative number is chosen by $k\sim (h+\delta)^{-a}$, where $a\in(0,1)$ is a fixed number, we have the estimate:
\begin{equation*}\label{prioriRate3}
\|x^{h,\delta}_{k+1}-x^\dagger\| = \mathcal{O}\left(\log^{-\nu}\left( (h+\delta)^{-1} \right) \right) \textrm{~as~} h,\delta\to 0.
\end{equation*}
\end{theorem}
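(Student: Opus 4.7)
The plan is to mirror the proof of Theorem \ref{reguThm_priori1}, replacing the power-type source condition by the logarithmic one (\ref{logarithmicQualification}) and handling the qualification via Proposition \ref{pro:covered}. Concretely, starting from the identity $x_0-x^\dagger=\varphi_\nu(A^*A)v$ from (\ref{SC}) and inserting $\pm\,\varphi_\nu(A_h^*A_h)v$, and using $\|g_k(A_h^*A_h)\|\le 1$ (since $|(\mu-\lambda)/(\mu+\lambda)|<1$ for $\lambda,\mu>0$), I would reproduce the same decomposition underlying (\ref{reguThm_priori1ProofIneq}),
\begin{equation*}
\|x^{h,\delta}_{k+1}-x^\dagger\|\le \|g_k(A_h^*A_h)\,\varphi_\nu(A_h^*A_h)\,v\| + \|[\varphi_\nu(A^*A)-\varphi_\nu(A_h^*A_h)]\,v\| + k\!\left(C_3\,h + \tfrac{2}{\sqrt{\|G\|}}\delta\right).
\end{equation*}

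The first (qualification) term will fix the final rate. Here I would invoke Proposition \ref{pro:covered}: by construction $\varphi_\nu$ is a concave index function on $(0,\bar\lambda]$, so it is covered by the linear index function $\varphi(\lambda)=\lambda$; Lemma \ref{Lemma_qualification1} with $p=1$ yields the qualification inequality (\ref{qualification1}) for $\varphi(\lambda)=\lambda$; hence, by the first part of Proposition \ref{pro:covered}, $\varphi_\nu$ also satisfies (\ref{qualification1}). This gives $\|g_k(A_h^*A_h)\varphi_\nu(A_h^*A_h)v\|\le \tilde\gamma\,\varphi_\nu(k^{-1})\|v\|$ for all sufficiently large $k$ (so that $1/k\le e^{-2\nu-1}$ falls in the logarithmic regime of $\varphi_\nu$).

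For the second (operator-perturbation) term, I would lift the scalar estimate (\ref{SC2}) to self-adjoint operators. Using $\|A^*A-A_h^*A_h\|\le(2\|A\|+h)h\le\tilde c\,h$ for $h\in(0,h_0]$, a functional-calculus argument analogous to Lemma \ref{Lemma_SC1Err} (of Vainikko type) yields $\|[\varphi_\nu(A^*A)-\varphi_\nu(A_h^*A_h)]v\|\le \tilde C_\nu\,\varphi_\nu(\tilde c\,h)\|v\|$. This is the main obstacle: inequality (\ref{SC2}) is pointwise in the spectral variable, whereas we need a uniform operator-norm bound; lifting it through the joint spectral representation of $A^*A$ and $A_h^*A_h$ is technical but standard for concave index functions satisfying (\ref{SC2}).

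Finally, plugging in the a priori choice $k\sim(h+\delta)^{-a}$ for fixed $a\in(0,1)$, I would compare the three contributions. The qualification term gives $\varphi_\nu(k^{-1})\asymp a^{-\nu}\log^{-\nu}((h+\delta)^{-1})$; the perturbation term is bounded by $\tilde C_\nu\,\log^{-\nu}(1/(\tilde c\,h))=\mathcal{O}(\log^{-\nu}((h+\delta)^{-1}))$ because $h\le h+\delta$; and the propagated-noise term is $k(h+\delta)=\mathcal{O}((h+\delta)^{1-a})$, which decays polynomially and is therefore $o(\log^{-\nu}((h+\delta)^{-1}))$. Summation yields the claimed bound $\|x^{h,\delta}_{k+1}-x^\dagger\|=\mathcal{O}(\log^{-\nu}((h+\delta)^{-1}))$. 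The freedom in the exponent $a\in(0,1)$ is harmless: any such choice gives the same leading rate, since the slow logarithmic approximation term dominates and the only requirement on $a$ is that the polynomial propagated-noise contribution be negligible, which holds precisely when $a<1$.
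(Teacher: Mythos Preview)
Your proposal is correct and follows essentially the same route as the paper: the same three-term decomposition stemming from (\ref{reguThm_priori1ProofIneq}), the qualification bound for $\varphi_\nu$ via Proposition~\ref{pro:covered} (concave $\Rightarrow$ covered by $\varphi(\lambda)=\lambda$, combined with Lemma~\ref{Lemma_qualification1} at $p=1$), the operator-perturbation term handled through (\ref{SC2}), and the final balancing under $k\sim(h+\delta)^{-a}$. If anything, you are more explicit than the paper about the need to lift the scalar inequality (\ref{SC2}) to the operator functional calculus; the paper simply writes $\|\varphi_\nu(A_h^*A_h)v-\varphi_\nu(A^*A)v\|\le \|v\|\sup_{\lambda,\zeta}|\varphi_\nu(\lambda+\zeta)-\varphi_\nu(\lambda)|\le \|v\|C_\nu\varphi_\nu(C_5 h)$ without further comment.
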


\begin{proof}
By Proposition \ref{pro:covered}, a constant  $\gamma_2$ exists such that:
\begin{equation*}\label{qualification2}
\sup_{\lambda\in(0,\|A^*A\|]} \varphi_{\nu}(\lambda) g_k(\lambda) \leq \gamma_2 \varphi_{\nu}(k^{-1}).
\end{equation*}
Then, according to (\ref{reguThm_priori1ProofIneq}), cf. Theorem \ref{reguThm_priori1}, we deduce together with (\ref{SC2}) that:
\begin{equation*}
\begin{array}{ll}
& \hspace{-5mm}  \|x^{h,\delta}_{k}-x^\dagger\| \\ &
\hspace{-5mm} \leq \left\| g_k(A^*_hA_h) \varphi_{\nu}(A^*_hA_h) v \right\| + \left\| \varphi_{\nu}(A^*_hA_h) v - \varphi_{\nu}(A^*A) v \right\| + k \left( C_3 h + \frac{2}{\sqrt{\|G\|}} \delta \right)
\\ & \hspace{-5mm} \leq \gamma_2 \log^{-\nu}(k) + \|v\| \sup\limits_{\lambda\in (0, \|A\|^2],~ \zeta\in [-C_5 h, C_5 h]} | \varphi_{\nu}(\lambda + \zeta) - \varphi_{\nu}(\lambda) | +  k \left( C_3 h + \frac{2}{\sqrt{\|G\|}} \delta \right) \\ &
\hspace{-5mm} \leq \gamma_2 \log^{-\nu}(k) + \|v\| C_{\nu}  \varphi_{\nu}(C_5 h) +  k \left( C_3 h + \frac{2}{\sqrt{\|G\|}} \delta \right),
\end{array}
\end{equation*}
which yields the required estimate by combining the definition of $k$ and the fact that $\varphi_{\nu}(C_5 h)\leq h$ for small enough $h$.
\end{proof}


\section{The second non-negativity preserving iterative regularization method}

\subsection{Construction of Algorithm 2}

In this section, we do not assume the injectivity of $A_h$, and we are interested in the unique $x_0$-minimum norm solution $\xdag\in \bar X$ for arbitrary starting point $x_0\in L^2(\Omega)$. For algorithmic reasons we introduce an output map $f_+: L^2(\Omega)\to L^2_+(\Omega)$ obeying the stability inequality:
\begin{equation}\label{fIneq}
\| f_+(x) - x_{plus} \| \leq C_f \| x - x_{plus} \| \quad \textrm{~for all~} x\in L^2(\Omega),\; x_{plus} \in L^2_+(\Omega),
\end{equation}
where $C_f>0$ is a fixed constant.

\begin{example}{\rm
It is clear that $f_+(x)=a x + (1-a)|x|$ satisfies the stability condition (\ref{fIneq}) for all $a\in[0,1/2]$. If $a=1/2$, i.e. $f_+(x)=x_+=\frac{|x|+x}{2}$, besides preserving the positivity, $f_+(x)$ can enhance the sparsity of the designed approximate solution.
}\end{example}

Now we are in a position to provide the second approach based on the fixed point equation (\ref{fixed_point}).

\begin{algorithm}
\label{algorithm}
\caption{Second non-negativity preserving iterative regularization method for ill-posed operator equation (\ref{OperatorEq}) under non-negativity constraints.}
\begin{itemize}
\item[] \textbf{Input:} Imperfect forward model $A_h$ and noisy data $y^\delta$ with noise levels $h$ and $\delta$.
\item[] \textbf{Parameters}:$\;x_0\in L^2(\Omega)$, $\{\alpha_k\}\subset(0,1)$ such that:
\begin{equation}\label{alphak}
\lim_{k\to \infty} \alpha_k =0,~ \sum^\infty_{k=1} \alpha_k =\infty, \textrm{~and~}  \sum^\infty_{k=1} | \alpha_{k+1} - \alpha_k| <\infty.
\end{equation}
\item[] \textbf{Start}:$\;k\gets 1$.
\item[] \textbf{While:} $k$ does not satisfy the stopping rule,
\begin{enumerate}
\item[i.]
\begin{equation}\label{Iteration}
\qquad\qquad z^{h,\delta}_{k+1} = \alpha_k x_{0} +  (1-\alpha_k) (G + A^*_h A_h)^{-1} \left[ (G - A^*_h A_h) |z^{h,\delta}_{k} | +  2 A^*_h y^\delta \right].
\end{equation}
\item[ii.] $k \gets k+1$;
\end{enumerate}
\item[] \textbf{Output:} the obtained non-negative approximate solution $x^{h,\delta}_k := f_+(z^{h,\delta}_{k+1})$.
\end{itemize}
\end{algorithm}

We list some choices of $\alpha_k$, satisfying conditions (\ref{alphak}) in the following example.
\begin{example} {\rm $\quad$\\
(i) $\alpha_k=\frac{1}{k}$. (ii) $\alpha_k=\frac{1}{k \underbrace{\log(k)\cdots\log_q(k)}_q}$, where $\log_q(\cdot)= \max\left\{ 1, \underbrace{ \log\cdots \log }_q (\cdot) \right\}$.
}\end{example}

\begin{remark}
It is clear that our first approach in Section 3 can be viewed as a specific case of Algorithm 2 with $\alpha_k\equiv0$ and $f_+(x)=|x|$.
\end{remark}

\subsection{Convergence analysis for exact forward model and data}

Let $(z_{k})^\infty_{k=1}$ be the sequence generated by iteration (\ref{Iteration}) with $h=\delta=0$. Then, it exhibits the following properties.

\begin{lemma}\label{bounded}
For all $k\geq1$: $\|z_{k}-\bar{x}\|\leq \|x_{0}-\bar{x}\|$, where $\bar{x}\in \bar X$ is a solution of (\ref{OperatorEq}).
\end{lemma}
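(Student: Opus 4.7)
The plan is to prove the bound by induction on $k$, using as the key ingredient a non-expansivity property of the fixed-point operator
$$T(x) \;:=\; (G + A^*A)^{-1}\bigl[(G - A^*A)\,|x| + 2 A^* y\bigr],$$
for which, by Proposition~\ref{pro:equiv} and $|\bar x|=\bar x$, the element $\bar x\in\bar X$ is a fixed point: $T(\bar x)=\bar x$. Subtracting and using that $T(\bar x)=\bar x$ yields the convenient identity
$$T(z_k) - \bar x \;=\; (G + A^*A)^{-1}(G - A^*A)\bigl(|z_k| - \bar x\bigr),$$
on which all subsequent estimates rest.

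First I would establish the core estimate $\|T(x)-\bar x\|\le\|x-\bar x\|$ for every $x\in L^2(\Omega)$, in two steps. Step~1 (operator bound): since $G=\mu I$, the self-adjoint operator $(G+A^*A)^{-1}(G-A^*A)$ is obtained from $A^*A$ via the scalar map $\lambda\mapsto(\mu-\lambda)/(\mu+\lambda)$, whose modulus is bounded by $1$ for all $\lambda\ge 0$ because $(\mu+\lambda)^2\ge(\mu-\lambda)^2$. The continuous functional calculus for the non-negative self-adjoint operator $A^*A$ then gives operator norm at most $1$ — importantly, this requires neither injectivity nor an eigenexpansion, which is exactly what Section~4 needs. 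Step~2 (absolute value is non-expansive toward non-negatives): pointwise a.e., since $\bar x(t)=|\bar x(t)|$, the reverse triangle inequality gives $\bigl||x(t)|-\bar x(t)\bigr|\le|x(t)-\bar x(t)|$, so integrating yields $\||x|-\bar x\|\le\|x-\bar x\|$. Combining steps~1 and~2 delivers $\|T(x)-\bar x\|\le\|x-\bar x\|$.

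Next I would carry out the induction. The base case $k=1$ is immediate from the initialization (the input $x_0$ is the starting value of the iteration, so $\|z_1-\bar x\|=\|x_0-\bar x\|$). For the inductive step, I would rewrite the iteration~(\ref{Iteration}) in the exact-data case as the convex combination
$$z_{k+1} - \bar x \;=\; \alpha_k(x_0 - \bar x) + (1-\alpha_k)\bigl(T(z_k) - \bar x\bigr),$$
apply the triangle inequality, insert the core estimate $\|T(z_k)-\bar x\|\le\|z_k-\bar x\|$, and then use the induction hypothesis $\|z_k-\bar x\|\le\|x_0-\bar x\|$:
$$\|z_{k+1}-\bar x\| \;\le\; \alpha_k\|x_0-\bar x\| + (1-\alpha_k)\|z_k-\bar x\| \;\le\; \|x_0-\bar x\|,$$
since $\alpha_k\in(0,1)$. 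This closes the induction.

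The only real obstacle is the operator bound of Step~1: unlike Section~3, no injectivity of $A$ is available here, so an eigenexpansion is not automatic. I would deliberately avoid that route and appeal to the spectral (functional calculus) estimate for the bounded self-adjoint operator $A^*A$, which is robust, gives the sharp bound $1$, and matches the generality in which Algorithm~2 is posed. Everything else reduces to the triangle inequality and the convex combination structure of~(\ref{Iteration}).
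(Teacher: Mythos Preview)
Your proof is correct and follows essentially the same route as the paper: induction on $k$, using the bound $\|(G+A^*A)^{-1}(G-A^*A)\|\le 1$ together with the pointwise inequality $\||x|-\bar x\|\le\|x-\bar x\|$ (valid since $\bar x\ge 0$), and then the convex-combination structure of~(\ref{Iteration}). The only minor discrepancy is the base case: in the paper's indexing $z_1$ is already one iteration step beyond $x_0$, so the paper handles $k=1$ by the same estimate as the inductive step rather than by your trivial identification $z_1=x_0$; your explicit functional-calculus justification of the operator norm bound (avoiding injectivity) is a welcome detail that the paper leaves implicit.
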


\begin{proof}
We prove it by induction. For $k=1$, the lemma holds since:
\begin{equation*}
\begin{array}{ll}
& \| z_{1}-\bar{x} \| = \| \alpha_1 (x_{0}-\bar{x}) + (1-\alpha_1) (G + A^* A)^{-1} (G - A^* A) ( |x_{0}| - |\bar{x}|) \| \\
& \qquad\qquad \leq \alpha_1 \| x_{0}-\bar{x}\| + (1-\alpha_1) \left\| (G + A^* A)^{-1} (G - A^* A) \right\| \|x_{0} - \bar{x}\| \\
& \qquad\qquad \leq  \alpha_1 \| x_{0}-\bar{x}\| + (1-\alpha_1) \|x_{0} - \bar{x}\| = \|x_{0}-\bar{x}\|.
\end{array}
\end{equation*}

Now, assume that $\|z_{k}-\bar{x}\|\leq \|x_{0}-\bar{x}\|$ holds for all $k\leq k_0-1$, and let's check the case where $k=k_0$. Indeed, this can be done according to the following inequalities:
\begin{equation*}
\begin{array}{ll}
& \| z_{k_0}-\bar{x}\| \leq \alpha_{k_0} \| x_{0}-\bar{x}\| + (1-\alpha_{k_0}) \left\| (G + A^* A)^{-1} (G - A^* A) ( |z_{k_0-1}| - |\bar{x}|) \right\| \\
& \qquad\qquad \leq \alpha_{k_0} \| x_{0}-\bar{x}\| + (1-\alpha_{k_0}) \left\| |z_{k_0-1}| - |\bar{x}| \right\| \\
& \qquad\qquad \leq \alpha_{k_0} \| x_{0}-\bar{x}\| + (1-\alpha_{k_0}) \left\| z_{k_0-1} - \bar{x} \right\| \leq \|x_{0}-\bar{x}\|.
\end{array}
\end{equation*}

\end{proof}

\begin{lemma}\label{CauchySequence}
$z_{k+1}-z_{k} \to 0$ as $k\to\infty$.
\end{lemma}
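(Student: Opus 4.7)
The plan is to set $T(x) := (G+A^{*}A)^{-1}\bigl[(G-A^{*}A)|x| + 2A^{*}y\bigr]$, so that the noise-free iteration reads
\[
z_{k+1} = \alpha_k x_0 + (1-\alpha_k)\,T(z_k).
\]
The first step is to observe that $T$ is non-expansive on $L^2(\Omega)$. Indeed, for $G=\mu I$ the operators $G+A^{*}A$ and $G-A^{*}A$ commute and are simultaneously diagonalizable through the spectral system of $A^{*}A$, so the spectral mapping estimate $\sup_{\lambda\ge 0}\bigl|(\mu-\lambda)/(\mu+\lambda)\bigr|\le 1$ gives $\|(G+A^{*}A)^{-1}(G-A^{*}A)\|\le 1$. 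Combined with the elementary pointwise bound $\bigl||x|-|y|\bigr|\le|x-y|$, this yields
\[
\|T(x)-T(y)\|\le\||x|-|y|\|\le\|x-y\|.
\]

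Next, I would subtract two consecutive iterates and rearrange, introducing $\alpha_{k-1}$:
\[
z_{k+1}-z_k = (\alpha_k-\alpha_{k-1})\bigl(x_0-T(z_{k-1})\bigr) + (1-\alpha_k)\bigl(T(z_k)-T(z_{k-1})\bigr).
\]
Taking norms and applying the non-expansivity of $T$ produces
\[
\|z_{k+1}-z_k\| \le (1-\alpha_k)\,\|z_k-z_{k-1}\| + |\alpha_k-\alpha_{k-1}|\,\|x_0-T(z_{k-1})\|.
\]
By Lemma \ref{bounded}, the sequence $(z_k)$ is bounded (in terms of $\|x_0-\bar x\|$), and the continuity of $T$ then gives a uniform bound $M$ on $\|x_0-T(z_{k-1})\|$. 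Setting $a_k:=\|z_{k+1}-z_k\|$ and $b_k:=M|\alpha_k-\alpha_{k-1}|$, the recursion reduces to
\[
a_k \le (1-\alpha_k)\,a_{k-1} + b_k,\qquad \sum_{k=1}^{\infty} b_k < \infty,
\]
where the summability follows from the third hypothesis in \eqref{alphak}.

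The final step—and the only non-routine one—is to conclude $a_k\to 0$ from this recursion. For this I would invoke the standard fixed-point auxiliary lemma (as in Xu's work on Halpern-type iterations): if $a_k\ge 0$ satisfies $a_k\le(1-\alpha_k)a_{k-1}+b_k$ with $\alpha_k\in(0,1)$, $\sum\alpha_k=\infty$, and $\sum b_k<\infty$, then $a_k\to 0$. A short self-contained justification unfolds $a_k$ to get
\[
a_k \le a_0\prod_{j=1}^{k}(1-\alpha_j) + \sum_{j=1}^{k} b_j \prod_{i=j+1}^{k}(1-\alpha_i);
\]
the first term tends to zero because $\sum\alpha_j=\infty$ forces $\prod(1-\alpha_j)\to 0$, and the second is the tail of an absolutely convergent series multiplied by factors bounded by $1$, which also vanishes. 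This completes the proof.

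The main obstacle is precisely the invocation (or self-contained reproof) of this recursion lemma; everything else is direct manipulation based on $T$ being non-expansive and $(z_k)$ being bounded by Lemma \ref{bounded}.
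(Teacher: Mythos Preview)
Your proposal is correct and follows essentially the same route as the paper's proof: both derive the recursion $\|z_{k+1}-z_k\|\le(1-\alpha_k)\|z_k-z_{k-1}\|+C|\alpha_k-\alpha_{k-1}|$ from the non-expansivity of the fixed-point map and the boundedness of the iterates (Lemma~\ref{bounded}), and then unroll it using $\sum\alpha_k=\infty$ and $\sum|\alpha_k-\alpha_{k-1}|<\infty$. The only cosmetic difference is that the paper inserts the fixed point $\bar x$ explicitly to obtain the constant $C=2\|x_0-\bar x\|$, whereas you bound $\|x_0-T(z_{k-1})\|$ abstractly via boundedness; also, your phrase ``tail of an absolutely convergent series'' for the sum $\sum_{j=1}^k b_j\prod_{i=j+1}^k(1-\alpha_i)$ is slightly loose (it is not literally a tail), but the split-at-$N$ argument you have in mind is exactly what the paper carries out.
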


\begin{proof}
By using the iteration (\ref{Iteration}) and the definition of $\bar{x}\in \bar{X}$, we have:
\begin{equation*}
\begin{array}{ll}
& \hspace{-5mm} z_{k+1}-z_{k} = (\alpha_k - \alpha_{k-1})(x_{0} - \bar{x}) +  (1-\alpha_k) (G + A^* A)^{-1} (G - A^* A) (|z_{k} | - |z_{k-1} |)  \\
& \qquad\qquad + (\alpha_{k-1} - \alpha_{k})  (G + A^* A)^{-1} (G - A^* A) ( |z_{k-1} | - |\bar{x}|),
\end{array}
\end{equation*}
which implies together with Lemma \ref{bounded} that:
\begin{equation}\label{NoNoise_pfReguIneq1}
\begin{array}{ll}
& \hspace{-5mm} \|z_{k+1}-z_{k}\| \leq |\alpha_{k-1}-\alpha_{k}| \|x_0 - \bar{x} \| + (1-\alpha_k) \|z_{k} - z_{k-1}\|  + |\alpha_{k-1}-\alpha_{k}|  \|z_{k-1} - \bar{x}\|
\\ & \qquad\qquad \leq 2 |\alpha_{k-1}-\alpha_{k}| \|x_0 - |\bar{x}| \| + (1-\alpha_k) \|z_{k} - z_{k-1}\|.
\end{array}
\end{equation}
Therefore, for any $n<k$ we get:
\begin{equation}\label{pfReguIneq2}
\|z_{k+1}-z_{k}\|  \leq 2 \sum^k_{i=n} |\alpha_{i}-\alpha_{i-1}| \|x_0 - |\bar{x}| \| + \|z_{n}-z_{n-1}\| \prod^k_{i=n} (1-\alpha_{i})
\end{equation}

On the other hand, since $ \sum^k_{i=1} \alpha_i \to \infty$ as $k\to\infty$, we deduce that for any fixed $n$: $\lim_{k\to\infty} \prod^k_{i=n} (1-\alpha_{i}) =0$. This limit together with (\ref{pfReguIneq2}) yields:
\begin{equation}\label{pfReguIneq3}
\limsup_{k\to \infty} \|z_{k+1}-z_{k}\| \leq 2 \limsup_{n,k\to \infty} \sum^k_{i=n} |\alpha_{i}-\alpha_{i-1}| \|x_0-|\bar{x}| \| = 0
\end{equation}
by noting that $\sum^\infty_{k=1} | \alpha_{k+1} - \alpha_k| <\infty$.

\end{proof}

With the help of Lemmas \ref{bounded} and \ref{CauchySequence}, we are ready to show the main result in this subsection.

\begin{theorem}\label{NoiseFreeConvergence}
Let $(z_{k})^\infty_{k=1}$  be the sequence, generated by (\ref{Iteration}) with $h=\delta=0$. Then, $z_{k}\to x^\dagger$ as $k\to\infty$.
\end{theorem}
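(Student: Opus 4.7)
The plan is to recognise (\ref{Iteration}) with $h=\delta=0$ as a Halpern-type fixed-point iteration for the operator
$$T(x) := (G+A^*A)^{-1}\bigl[(G-A^*A)|x| + 2A^*y\bigr], \qquad z_{k+1} = \alpha_k x_0 + (1-\alpha_k)T(z_k).$$
Since $|\cdot|$ is $1$-Lipschitz on $L^2(\Omega)$ and, for $G=\mu I$, spectral calculus shows that $(G+A^*A)^{-1}(G-A^*A)$ has operator norm at most one (its eigenvalues are $(\mu-\lambda_j)/(\mu+\lambda_j)\in(-1,1]$), $T$ is a nonexpansive self-map on $L^2(\Omega)$. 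By Proposition~\ref{pro:equiv} its fixed-point set coincides with $\hat X = \bar X$, and $\xdag$ is precisely the metric projection of $x_0$ onto this closed convex set.

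Next I would identify the weak cluster points of $\{z_k\}$. By Lemma~\ref{bounded} the sequence is bounded, so every subsequence has a weakly convergent sub-subsequence $z_{k_j}\rightharpoonup z^*$. Rearranging the iteration gives
$$T(z_k)-z_k \;=\; \tfrac{1}{1-\alpha_k}(z_{k+1}-z_k) \;-\; \tfrac{\alpha_k}{1-\alpha_k}(x_0-z_k),$$
which tends to $0$ in norm because $\alpha_k\to 0$ (from (\ref{alphak})), $\|z_{k+1}-z_k\|\to 0$ (Lemma~\ref{CauchySequence}), and $\{z_k\}$ is bounded. Invoking the demiclosedness principle for nonexpansive mappings in Hilbert space to the pair $(z_{k_j},T(z_{k_j}))$ then yields $z^*\in\mathrm{Fix}(T)=\bar X$.

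Now I would exploit that $\xdag$ is the projection of $x_0$ onto the closed convex set $\bar X$, which is equivalent to the variational inequality $\langle x_0-\xdag,\bar x-\xdag\rangle\le 0$ for every $\bar x\in\bar X$. Applied to any weak cluster point $z^*$ of $\{z_k\}$ (each of which lies in $\bar X$ by the preceding step) this gives
$$\limsup_{k\to\infty}\langle x_0-\xdag,\,z_k-\xdag\rangle \;\le\; 0,$$
and, because $T(z_k)-z_k\to 0$ strongly, the same bound holds with $T(z_k)$ replacing $z_k$.

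Finally, I would upgrade weak to strong convergence. Using $T(\xdag)=\xdag$ and nonexpansiveness of $T$, squaring the identity $z_{k+1}-\xdag=\alpha_k(x_0-\xdag)+(1-\alpha_k)(T(z_k)-\xdag)$ produces a recursion
$$\|z_{k+1}-\xdag\|^2 \;\le\; (1-\alpha_k)\|z_k-\xdag\|^2 + \alpha_k\,\beta_k, \qquad \limsup_{k\to\infty}\beta_k\le 0,$$
with $\beta_k$ a linear combination of $\alpha_k\|x_0-\xdag\|^2$ and $2(1-\alpha_k)\langle x_0-\xdag,T(z_k)-\xdag\rangle$. The standard Xu-type recursive lemma, using $\sum\alpha_k=\infty$, then forces $\|z_k-\xdag\|\to 0$. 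The main obstacle is the second step: because $|\cdot|$ is only Lipschitz, one cannot pass to the weak limit inside $T$ by direct weak continuity, and this is bypassed cleanly by appealing to the demiclosedness of $I-T$, which in turn rests on the nonexpansiveness verified in the first paragraph.
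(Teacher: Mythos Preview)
Your proposal is correct and follows essentially the same route as the paper's proof: both establish asymptotic regularity $\|z_k-T(z_k)\|\to 0$ from Lemmas~\ref{bounded} and~\ref{CauchySequence} together with $\alpha_k\to 0$, then show that weak cluster points of $\{z_k\}$ lie in $\bar X=\mathrm{Fix}(T)$, invoke the variational characterization $\langle x_0-\xdag,\bar x-\xdag\rangle\le 0$ of the projection, and finish with the recursive inequality $\|z_{k+1}-\xdag\|^2\le(1-\alpha_k)\|z_k-\xdag\|^2+\alpha_k\beta_k$ driven to zero by $\sum\alpha_k=\infty$. The only notable difference is one of explicitness: where the paper writes ``$T(z_{k_j})\rightharpoonup\bar z$, hence $\bar z$ is a solution of (\ref{identity_x})'' without further comment, you name the demiclosedness principle for $I-T$ as the mechanism, which is indeed the correct justification (since $|\cdot|$ is not weakly sequentially continuous, one cannot simply pass to the weak limit inside $T$). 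Your framing via Halpern iteration and Xu's lemma is more abstract, but the underlying argument is the same.
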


\begin{proof}
By Lemma \ref{bounded}, sequence $(z_k)$ is uniformly bounded. Then, according to Lemma \ref{CauchySequence} and the following inequalities,
\begin{equation*}
\begin{array}{ll}
& \hspace{-8mm}  \left\| z_{k} - (G + A^* A)^{-1} \left[ (G - A^* A) |z_k| + 2 A^* y \right] \right\| \\
& \leq \left\| z_{k} - (1-\alpha_k) (G + A^* A)^{-1} \left[ (G - A^* A) |z_{k-1}| + 2 A^* y \right] \right\| \\
& \qquad\qquad + (1-\alpha_k) \Big\| (G + A^* A)^{-1} \left[ (G - A^* A) |z_{k-1}| + 2 A^* y \right] \\
& \qquad\qquad\qquad - (G + A^* A)^{-1} \left[ (G - A^* A) |z_{k}| + 2 A^* y \right] \Big\| \\
& \qquad\qquad + \alpha_k \left\| (G + A^* A)^{-1} \left[ (G - A^* A) |z_k| + 2 A^* y \right] \right\|  \\
&  \leq \alpha_k \|x_{0}\| + (1-\alpha_k) \|z_{k-1}-z_{k}\| + \alpha_k \left( \|z_k\| + 2\|(G + A^* A)^{-1}A^* y\| \right),
\end{array}
\end{equation*}
we deduce together with the property of $\alpha_k$ that,
\begin{equation}\label{limitSup1}
\lim_{k\to \infty}\left\| z_{k} - (G + A^* A)^{-1} \left[ (G - A^* A) |z_k| + 2 A^* y \right] \right\| = 0.
\end{equation}

We show next that,
\begin{equation}\label{limitSup2}
\limsup_{k\to \infty} \langle (G + A^* A)^{-1} \left[ (G - A^* A) |z_k| + 2 A^* y \right] - x^\dagger, x_0 - x^\dagger  \rangle \leq 0.
\end{equation}
We prove this by contradiction. Assume that there exist a subsequence $(k_j)$ and a number $d>0$ such that  for all $j\in \mathbb{N}$,
\begin{equation}\label{PfIneq1}
\langle (G + A^* A)^{-1} \left[ (G - A^* A) |z_{k_j}| + 2 A^* y \right] - x^\dagger, x_0 - x^\dagger  \rangle \geq d.
\end{equation}
Since $(z_{k_j})$ is uniformly bounded, without loss of generality, we may also assume that $(z_{k_j})$ is weakly convergent to an element, say, $\bar{z}$. By (\ref{limitSup1}), the sequence $((G + A^* A)^{-1} \left[ (G - A^* A) |z_{k_j}| + 2 A^* y \right])$ also converges weakly to $\bar{z}$. Hence, $\bar{z}$ is a solution of (\ref{identity_x}).

On the other hand, by the definition of $x^\dagger$, i.e. $x^\dagger=\arg\min_{x \in \bar X} \|x-x_0\|^2$, for any solution of (\ref{identity_x}), say $\bar{z}$, we have the following variational inequality,
\begin{equation}\label{PfIneq2}
\langle x_0 - x^\dagger, \bar{z} - x^\dagger \rangle \leq 0.
\end{equation}

Combining (\ref{PfIneq1}) and (\ref{PfIneq2}) we get the contradiction:
\begin{equation*}
\begin{array}{ll}
& d \leq \limsup_{j\to \infty} \langle (G + A^* A)^{-1} \left[ (G - A^* A) |z_{k_j}| + 2 A^* y \right] - x^\dagger, x_0 - x^\dagger  \rangle \\
& \qquad = \langle \bar{z} - x^\dagger, x_0 - x^\dagger \rangle \leq 0.
\end{array}
\end{equation*}

By (\ref{limitSup2}) and the definition of $\alpha_k$, for any fixed $\epsilon>0$, there exists a number $k'=k'(\epsilon)$ such that for all $k\geq k'$, the following two inequalities holds simultaneously:
\begin{equation*}
\hspace{-10mm} \langle (G + A^* A)^{-1} \left[ (G - A^* A) |z_k| + 2 A^* y \right] - x^\dagger, x_0 - x^\dagger  \rangle \leq \epsilon, ~ \alpha_k \|x_0- x^\dagger\|^2\leq \epsilon.
\end{equation*}
Then, for all $k\geq k'+1$ we have:
\begin{equation*}
\begin{array}{ll}
& \hspace{-7mm} \|z_{k} - x^\dagger \|^2 = \alpha^2_k \|x_{0} - x^\dagger \|^2 \\
&  + 2 \alpha_k (1-\alpha_k) \langle (G + A^* A)^{-1} \left[ (G - A^* A) |z_{k-1}| + 2 A^* y \right] - x^\dagger, x_0 - x^\dagger  \rangle \\
&  + (1-\alpha_k)^2 \|(G + A^* A)^{-1} \left[ (G - A^* A) |z_{k-1}| + 2 A^* y \right] - x^\dagger \|^2 \\
& \hspace{-5mm} \leq \alpha^2_k \|x_{0} - x^\dagger \|^2 + 2  \alpha_k (1-\alpha_k) \epsilon  \\
& \qquad + (1-\alpha_k)^2 \|(G + A^* A)^{-1} \left[ (G - A^* A) |z_{k-1}| + 2 A^* y \right] - x^\dagger \|^2 \\
& \hspace{-5mm} \leq 3 \alpha_k \epsilon + (1-\alpha_k)^2 \| (G + A^* A)^{-1} \left[ (G - A^* A) |z_{k-1}| + 2 A^* y \right] \\
& \qquad\qquad\qquad\qquad\qquad - (G + A^* A)^{-1} \left[ (G - A^* A) x^\dagger+ 2 A^* y \right] \|^2
\end{array}
\end{equation*}
\begin{equation*}
\begin{array}{ll}
& \hspace{-5mm} \leq 3 \alpha_k \epsilon + (1-\alpha_k) \| |z_{k-1}| - x^\dagger \|^2 \leq 3 \alpha_k \epsilon + (1-\alpha_k) \| z_{k-1} - x^\dagger \|^2,
\end{array}
\end{equation*}
which implies that:
\begin{equation}\label{PfIneq3}
\|z_{k} - x^\dagger \|^2 - 3 \epsilon \leq (1-\alpha_k) \left( \|z_{k-1} - x^\dagger \|^2  - 3 \epsilon \right)  \leq  \prod^k_{i=1} (1-\alpha_i) \left( \|x_{0} - x^\dagger \|^2  - 3 \epsilon \right).
\end{equation}

Since $ \sum^\infty_{i=1} \alpha_i =\infty$ implies $\prod^\infty_{i=1} (1-\alpha_{i}) =0$, we obtain together with (\ref{PfIneq3}) that:
\begin{equation}\label{limitSup3}
\limsup_{k\to \infty} \|z_{k} - x^\dagger \|^2 \leq 3 \epsilon,
\end{equation}
which yields the required result as $\epsilon>0$ can be made arbitrarily small.

\end{proof}

\begin{remark}
By the definition of $f_+$ in (\ref{fIneq}), we have immediately the strong convergence of Algorithm 2 for exact operator $A$ and right-hand side $y$, i.e. $x_k\to x^\dagger$ as $k\to\infty$.
\end{remark}


\subsection{Regularization property of Algorithm 2}

In this section, we prove the convergence of Algorithm 2 with respect to the noise levels under both a priori and a posteriori stopping rules.

\begin{proposition}\label{estimateNoisy}
Let $(z^{h,\delta}_{k})^\infty_{k=1}$ and $(z_{k})^\infty_{k=1}$ be two sequences, generated by iteration (\ref{Iteration}) with noisy information $\{A_h, y^\delta\}$ and exact information $\{A, y\}$, respectively. Then, there exists a constant $C_7$ such that:
\begin{equation}\label{lemma_operator_ineq2}
\|z^{h,\delta}_{k}  - z_{k} \| \leq C_7 (h+\delta) \sum^k_{j=1} \prod^{k}_{i=j} (1-\alpha_i) .
\end{equation}
\end{proposition}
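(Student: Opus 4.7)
The plan is to mimic the structure of the proof of Proposition~\ref{estimateNoisyMethod1} (which is the $\alpha_k\equiv 0$ case) and push through the extra factor $(1-\alpha_k)$ coming from the convex combination with $x_0$ in the iteration (\ref{Iteration}).

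First, I would subtract the noisy and exact recursions. Since the $\alpha_k x_0$ term is identical in both, it cancels, giving
\begin{equation*}
z^{h,\delta}_{k+1}-z_{k+1} = (1-\alpha_k)\Bigl\{ (G+A_h^*A_h)^{-1}\bigl[(G-A_h^*A_h)|z^{h,\delta}_k|+2A_h^*y^\delta\bigr] - (G+A^*A)^{-1}\bigl[(G-A^*A)|z_k|+2A^*y\bigr]\Bigr\}.
\end{equation*}
By adding and subtracting $(G+A_h^*A_h)^{-1}(G-A_h^*A_h)|z_k|$ and $2(G+A_h^*A_h)^{-1}A_h^* y$, I would split the bracket into four pieces: (i) a term involving the difference $|z^{h,\delta}_k|-|z_k|$ under the bounded operator $(G+A_h^*A_h)^{-1}(G-A_h^*A_h)$, whose norm is at most $1$; (ii) the operator perturbation $(G+A_h^*A_h)^{-1}(G-A_h^*A_h)-(G+A^*A)^{-1}(G-A^*A)$ applied to $|z_k|$; (iii) the operator perturbation $(G+A_h^*A_h)^{-1}A_h^*-(G+A^*A)^{-1}A^*$ applied to $y$; and (iv) the data noise term $2(G+A_h^*A_h)^{-1}A_h^*(y^\delta-y)$, which is bounded by $\tfrac{2}{\sqrt{\|G\|}}\delta$.

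Pieces (ii) and (iii) are exactly what Lemma~\ref{lemma_operator} controls, giving bounds of $C_1 h\|z_k\|$ and $2C_2 h\|y\|$. To make the coefficient of $h$ in (ii) uniform in $k$, I need $\|z_k\|$ to be bounded independently of $k$; this follows immediately from Lemma~\ref{bounded}, which yields $\|z_k\|\le\|\bar x\|+\|x_0-\bar x\|=:M$. Using $\|\,|z^{h,\delta}_k|-|z_k|\,\|\le\|z^{h,\delta}_k-z_k\|$ in (i), I then obtain the scalar recursion
\begin{equation*}
e_{k+1} \le (1-\alpha_k)\,e_k + (1-\alpha_k)\,C_7(h+\delta),\qquad e_k:=\|z^{h,\delta}_k-z_k\|,
\end{equation*}
where $C_7:=\max\bigl\{C_1 M + 2C_2\|y\|,\ 2/\sqrt{\|G\|}\bigr\}$ (or simply the sum).

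The final step is to unroll the recursion from the common initial condition $z^{h,\delta}_1=z_1=x_0$, i.e.\ $e_1=0$. A straightforward induction yields
\begin{equation*}
e_{k+1} \le C_7(h+\delta)\sum_{j=1}^{k}\prod_{i=j}^{k}(1-\alpha_i),
\end{equation*}
which is precisely the claimed bound (up to an index shift). I do not anticipate a serious obstacle here: Lemma~\ref{lemma_operator} handles the operator perturbations, Lemma~\ref{bounded} provides the essential uniform bound on $\|z_k\|$ without which the constant in front of $h$ would depend on $k$, and the rest is a standard discrete Gronwall-type unrolling. The only place to be careful is that the $\alpha_k$-relaxation does not enter the constant $C_7$ and that the product/sum in the final bound arises naturally from multiplying each recursion step by $(1-\alpha_k)$.
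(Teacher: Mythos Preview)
Your proposal is correct and follows essentially the same approach as the paper: subtract the two recursions so that the $\alpha_k x_0$ term cancels, split the remaining bracket into the same four pieces, invoke Lemma~\ref{lemma_operator} for the operator perturbations and Lemma~\ref{bounded} for the uniform bound on $\|z_k\|$, and then unroll the resulting scalar recursion from the common initial value. The only discrepancies are cosmetic (a factor $2$ versus $4$ in front of the $\delta$-term and the harmless index shift you already noted).
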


\begin{proof}
By Lemma \ref{bounded}, we have $\|z_{k}\|\leq \|\bar{x}\| + \|x_{0}-\bar{x}\|$ for all $k\geq1$, which implies together with Lemma \ref{lemma_operator} that:
\begin{equation*}
\begin{array}{ll}
& \|z^{h,\delta}_{k}-z_{k}\| = (1-\alpha_k) \Big\|  (G  +  A^*_hA_h)^{-1} \left[ (G - A^*_hA_h) |z^{h,\delta}_{k-1} | +  2 A^*_h y^\delta \right] \\
& \qquad \quad - (G  +  A^*A)^{-1} \left[ (G - A^*A) |z_{k-1} | +  2 A^* y \right] \Big\| \\
& \leq (1-\alpha_k) \Big\{  \|(G  +  A^*_hA_h)^{-1} (G - A^*_hA_h) (|z^{h,\delta}_{k-1} | - |z_{k-1}| )\| \\
& \qquad \quad + \| [(G  +  A^*_hA_h)^{-1} (G - A^*_hA_h) - (G  +  A^*A)^{-1} (G - A^*A)] |z_{k-1}| \|  \\
& \qquad \quad + 2 \|(G  +  A^*_hA_h)^{-1} A^*_h y^\delta - (G  +  A^*_hA_h)^{-1} A^*_h y \|
\\
& \qquad \quad + 2 \|(G  +  A^*_hA_h)^{-1} A^*_h y - (G  +  A^*A)^{-1} A^* y \| \Big \} \\
& \leq (1-\alpha_k) \Big\{ \|z^{h,\delta}_{k-1}  - z_{k-1}\| + C_1 (\|\bar{x}\| + \|x_{0}-\bar{x}\|) h + \frac{4}{\sqrt{\|G\|}} \delta + 2 C_2 \|y\| h \Big \}.
\end{array}
\end{equation*}
We complete the proof by noting $z^{h,\delta}_{0}  = z_{0} = x_0$ and defining that $C_7=\max\{ C_1 (\|\bar{x}\|$ $+ \|x_{0}-\bar{x}\|) + 2 C_2\|y\|, 4/\sqrt{\|G\|} \}$.
\end{proof}

By combining Theorem \ref{NoiseFreeConvergence}, Proposition \ref{estimateNoisy}, and the inequality $\sum^k_{j=1} \prod^{k}_{i=j} (1-\alpha_i)\leq k$, we obtain immediately the regularization property of Algorithm 2 as follows.

\begin{theorem}\label{reguThm2}
Let $(x^{h,\delta}_{k})^\infty_{k=1} \subset L^2_+(\Omega)$ be the sequence generated by Algorithm 2 with general type of $G$, defined in (\ref{eq:multiop}). Then, if the iterative number $k^*=k^*(h,\delta)$ is chosen such that:
\begin{equation*}\label{k_condition2}
k^*\to \infty \textrm{~and~} (h+\delta)k^*  \to 0 \textrm{~as~} h,\delta \to 0,
\end{equation*}
the approximate solution $x^{h,\delta}_{k^*}$ converges to $x^\dagger$ as $\delta,h\to0$.
\end{theorem}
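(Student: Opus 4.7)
The plan is to combine Proposition~\ref{estimateNoisy} (noisy-versus-exact iterate bound) with Theorem~\ref{NoiseFreeConvergence} (convergence of the noise-free iteration) by means of the stability property (\ref{fIneq}) of the output map $f_+$. Since $x^\dagger \in L^2_+(\Omega)$, inequality (\ref{fIneq}) applied to $x=z^{h,\delta}_{k^*+1}$ and $x_{plus}=x^\dagger$ yields
\begin{equation*}
\|x^{h,\delta}_{k^*}-x^\dagger\| = \|f_+(z^{h,\delta}_{k^*+1})-x^\dagger\| \leq C_f\,\|z^{h,\delta}_{k^*+1}-x^\dagger\|,
\end{equation*}
so the argument reduces to proving the strong convergence $z^{h,\delta}_{k^*+1}\to x^\dagger$.

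For that reduction I would insert the exact-data iterate $z_{k^*+1}$ and apply the triangle inequality, splitting the error into a \emph{noise propagation term} and an \emph{exact regularization error}:
\begin{equation*}
\|z^{h,\delta}_{k^*+1}-x^\dagger\| \leq \|z^{h,\delta}_{k^*+1}-z_{k^*+1}\| + \|z_{k^*+1}-x^\dagger\|.
\end{equation*}
Proposition~\ref{estimateNoisy} bounds the first term by $C_7(h+\delta)\sum_{j=1}^{k^*+1}\prod_{i=j}^{k^*+1}(1-\alpha_i)$, and since every factor $(1-\alpha_i)$ lies in $(0,1)$, this sum is crudely majorized by $k^*+1$; thus the noise term is dominated by $C_7(h+\delta)(k^*+1)$, which tends to zero under the a priori choice $(h+\delta)k^*\to 0$.

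The remaining term $\|z_{k^*+1}-x^\dagger\|$ is handled by Theorem~\ref{NoiseFreeConvergence}: since $k^*\to\infty$ as $h,\delta\to 0$, and the noise-free sequence $(z_k)$ converges strongly to $x^\dagger$, any subsequence indexed by $k^*+1$ also converges to $x^\dagger$. Combining the two estimates with the factor $C_f$ from (\ref{fIneq}) gives $\|x^{h,\delta}_{k^*}-x^\dagger\|\to 0$, as required.

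The main (and essentially only) subtlety is ensuring that Proposition~\ref{estimateNoisy} applies to \emph{general} $G$ from (\ref{eq:multiop}) rather than only the special case $G=\mu I$ used in Algorithm~1; inspecting the proof of that proposition, one sees that it only relies on Lemma~\ref{lemma_operator} and on the uniform boundedness of $(z_k)$ established in Lemma~\ref{bounded}, both of which are valid for the general multiplier operator $G$, so no new calculation is needed. Apart from this verification, the argument is a direct assembly of results already proved, and no delicate estimate of the product $\prod(1-\alpha_i)$ is necessary because the a priori stopping rule already absorbs the worst-case growth $k^*$.
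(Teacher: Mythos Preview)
Your proposal is correct and follows essentially the same approach as the paper: the paper's proof is a single sentence stating that the result follows by combining Theorem~\ref{NoiseFreeConvergence}, Proposition~\ref{estimateNoisy}, and the crude bound $\sum_{j=1}^{k}\prod_{i=j}^{k}(1-\alpha_i)\leq k$, which is precisely the triangle-inequality decomposition you spell out in detail. Your additional remark that Proposition~\ref{estimateNoisy} (and its supporting Lemmas~\ref{lemma_operator} and~\ref{bounded}) remain valid for general multiplier-type $G$ is a useful clarification that the paper leaves implicit.
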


Similar to Theorem \ref{reguThmPosteriori}, Algorithm 2 yields a regularization scheme under the modified discrepancy principle (\ref{Method1discrepancy}), i.e. the following theorem holds.

\begin{theorem}\label{ThmRegu1}
Assume that $x_0=0$. Let $x^{h,\delta}_{k^*} \in L^2_+(\Omega)$ be the approximate solution, obtained by Algorithm 2 with $G=\mu I$ and the stopping rule (\ref{Method1discrepancy}) and $\alpha_k$ satisfying both conditions (\ref{alphak}) and \footnote[5]{It is not difficult to verify that $\alpha_k = 1/k$ satisfies condition (\ref{AssumptionAlpha}).}
\begin{equation}\label{AssumptionAlpha}
\limsup_{k\to \infty} \sum^k_{i=1} \alpha_i \prod^k_{j=i} (1-\alpha_j) < \infty.
\end{equation}
Then,
\begin{itemize}
\item[(i)] if $f_+(\cdot)=|\cdot|$, the approximate solution $x^{h,\delta}_{k^*}$ converges, up to a subsequence,  to the unique minimum norm solution of (\ref{OperatorEq}) as $h,\delta \to0$.

\item[(ii)] In the case $k^*\to \infty$ as $h,\delta\to 0$, assertion (i) holds for arbitrary $f_+$ satisfying (\ref{fIneq}).
\end{itemize}
\end{theorem}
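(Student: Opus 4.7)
The strategy mirrors that of Theorem \ref{reguThmPosteriori}, but must accommodate both the relaxation parameter $\alpha_k$ and the possible non-injectivity of $A$. Given any sequence $(h_n,\delta_n)\to (0,0)$, set $k_n:=k^*(h_n,\delta_n)$. I would first verify well-posedness of the stopping rule, i.e.~that $k_n<\infty$ for every fixed positive noise level; this is by contradiction, using an adapted monotonicity inequality (see below) to derive $\|z^{h,\delta}_{k}-x^\dagger\|^2\to -\infty$, a contradiction. Then I pass to subsequences and split into the two familiar cases: either $(k_n)$ admits a bounded accumulation value, or $k_n\to\infty$.

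In the \emph{bounded} case, extract a sub-subsequence with $k_n\equiv \tilde k$. Since the iteration map for a fixed number of steps depends continuously on $(A_h,y^\delta)$, one has $z^{h_n,\delta_n}_{\tilde k}\to z_{\tilde k}$ strongly in $L^2(\Omega)$, where $z_{\tilde k}$ is the exact-data iterate. Passing to the limit in the stopping rule (\ref{Method1discrepancy}) gives $(\tilde G+AA^*)^{-1}(y-A|z_{\tilde k}|)=0$, whence $|z_{\tilde k}|\in\bar X$. Under $f_+=|\cdot|$ this identifies the limit of $x^{h_n,\delta_n}_{k_n}$ as a solution, but to hit the \emph{minimum-norm} solution I invoke Lemma \ref{bounded} (valid since $x_0=0$) to bound $\|z_k\|\le 2\|x^\dagger\|$ uniformly, extract a weak accumulation point $\hat x\in\bar X$, and combine weak lower semicontinuity $\|\hat x\|\le\liminf\|x^{h_n,\delta_n}_{k_n}\|\le\|x^\dagger\|$ with uniqueness of $x^\dagger=\arg\min_{x\in\bar X}\|x\|$ to conclude $\hat x=x^\dagger$; weak convergence plus convergence of norms then upgrades to strong convergence.

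In the case $k_n\to\infty$, I follow the blueprint of case (ii) in the proof of Theorem \ref{reguThmPosteriori}. Write the update as $z^{h,\delta}_{k+1}=(1-\alpha_k)\,w_k-\alpha_k\cdot 0$ where $w_k:=(G+A^*_hA_h)^{-1}[(G-A^*_hA_h)|z^{h,\delta}_k|+2A^*_hy^\delta]$, expand $\|z^{h,\delta}_{k+1}-x^\dagger\|^2$, and use the identity (\ref{IneqG}) to generate the same key inner product with $(\tilde G+A_hA^*_h)^{-1}(y^\delta-A_h|z^{h,\delta}_k|)$ as in (\ref{PfposterioriIneq3}). The relaxation produces extra $\alpha_k$-proportional terms of order $\alpha_k(\|x^\dagger\|^2 + \|w_k-x^\dagger\|^2)$; summability of these contributions is precisely what (\ref{AssumptionAlpha}) guarantees, so that a telescoped almost-monotonicity $\|z^{h,\delta}_{k_n}-x^\dagger\|\le \|z^{h,\delta}_{k_m}-x^\dagger\|+\text{(bounded error)}$ holds for $n>m$ before the discrepancy criterion triggers. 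Fixing $m$ large, Theorem \ref{NoiseFreeConvergence} makes $\|z_{k_m}-x^\dagger\|$ arbitrarily small, and Proposition \ref{estimateNoisy} bounds $\|z^{h_n,\delta_n}_{k_m}-z_{k_m}\|\le C_7(h_n+\delta_n)\sum_{j=1}^{k_m}\prod_{i=j}^{k_m}(1-\alpha_i)$, which vanishes as $n\to\infty$ with $k_m$ fixed. This delivers $x^{h_n,\delta_n}_{k_n}\to x^\dagger$.

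Claim (ii) then follows immediately: under the standing hypothesis $k^*\to\infty$ only the preceding analysis is relevant, and the stability property (\ref{fIneq}) applied with $x_{plus}=x^\dagger$ transfers convergence from $z^{h,\delta}_{k^*+1}$ to $x^{h,\delta}_{k^*}=f_+(z^{h,\delta}_{k^*+1})$ for any admissible $f_+$, removing the need for $f_+=|\cdot|$. The main obstacle I foresee is deriving the adapted monotonicity recursion for the relaxed scheme: the $\alpha_k$-terms that are absent in Algorithm 1 must be absorbed using (\ref{AssumptionAlpha}), and without injectivity of $A$ the finite-stop subcase of (i) has to be closed via the weak-subsequence identification of $x^\dagger$ rather than by the simpler argument available to Theorem \ref{reguThmPosteriori}.
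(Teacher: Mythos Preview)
Your overall architecture---well-posedness of the stopping rule, the bounded/unbounded dichotomy on $k_n$, and closing the unbounded case via a monotonicity recursion combined with Theorem~\ref{NoiseFreeConvergence} and Proposition~\ref{estimateNoisy}---is precisely the paper's. The paper in fact dispatches assertion (i) in one line by deferring to the proof of Theorem~\ref{reguThmPosteriori}, and devotes its explicit argument entirely to (ii).

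There is, however, a genuine gap in your bounded-$k_n$ subcase of (i). You are right that non-injectivity of $A$ means the limit $|z_{\tilde k}|\in\bar X$ is not automatically $x^\dagger$, but your proposed minimum-norm identification fails: Lemma~\ref{bounded} with $x_0=0$ and $\bar x=x^\dagger$ gives $\|z_{\tilde k}-x^\dagger\|\le\|x^\dagger\|$, hence only $\||z_{\tilde k}|\|\le 2\|x^\dagger\|$ via the triangle inequality, not $\||z_{\tilde k}|\|\le\|x^\dagger\|$. The chain $\|\hat x\|\le\liminf\|x^{h_n,\delta_n}_{k_n}\|\le\|x^\dagger\|$ therefore breaks at the second inequality, and you cannot conclude $\hat x=x^\dagger$. (The paper does not address this point either; it simply invokes the earlier theorem, where injectivity made the identification trivial, and guards the statement with ``up to a subsequence''.)

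For the unbounded case your sketch points in the right direction, but the paper's derivation of the key recursion is cleaner than expanding $\|z^{h,\delta}_{k+1}-x^\dagger\|^2$ directly. It instead expands
\[
\|z^{h,\delta}_{k+1}-(1-\alpha_k)x^\dagger\|^2-(1-\alpha_k)\|z^{h,\delta}_k-x^\dagger\|^2,
\]
and after the same inner-product manipulation as in (\ref{PfposterioriIneq3}) arrives directly at the \emph{multiplicative} recursion $\|z^{h,\delta}_{k+1}-x^\dagger\|\le\alpha_k\|x^\dagger\|+(1-\alpha_k)\|z^{h,\delta}_k-x^\dagger\|$. Iterating from $k_m$ to $k_n$ then produces exactly the product-sum $\sum_{i=k_m}^{k_n-1}\alpha_i\prod_{j=i}^{k_n-2}(1-\alpha_j)$ that (\ref{AssumptionAlpha}) is designed to bound. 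Your phrase ``summability of these contributions'' is misleading: $\sum\alpha_k=\infty$, so the $\alpha_k$-terms are \emph{not} summable as additive errors, and it is the $(1-\alpha_k)$ factor built into the recursion---not additivity---that makes (\ref{AssumptionAlpha}) the correct hypothesis.
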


\begin{proof}
According to the proof of Theorem \ref{reguThmPosteriori}, we only need to show assertion (ii). By (\ref{Iteration}) and the assumption $x_0=0$, we have:
\begin{equation}\label{PfDiff}
z^{h,\delta}_{k+1}- (1-\alpha_k)|z^{h,\delta}_{k}| = 2(1-\alpha_k) (G + A^*_h A_h)^{-1}  A^*_h( y^\delta - A_h |z^{h,\delta}_{k}|).
\end{equation}

By using (\ref{PfDiff}), we obtain together with the definition of $\tilde{G}$ in (\ref{DefGh}) and the positivity of $x^\dagger$ that:
\begin{equation*}
\begin{array}{ll}
& \|z^{h,\delta}_{k+1}- (1-\alpha_k)x^\dagger\|^2 - (1-\alpha_k)\|z^{h,\delta}_{k}- x^\dagger\|^2 \\
&   \leq \|z^{h,\delta}_{k+1}- (1-\alpha_k)x^\dagger\|^2 - \|(1-\alpha_k)|z^{h,\delta}_{k}|- (1-\alpha_k)x^\dagger\|^2 \\
&  = 2((1-\alpha_k) |z^{h,\delta}_{k}|-(1-\alpha_k)x^\dagger, z^{h,\delta}_{k+1}-(1-\alpha_k) |z^{h,\delta}_{k}|) + \|z^{h,\delta}_{k+1}-(1-\alpha_k) |z^{h,\delta}_{k}|\|^2 \\
&  = 4(1-\alpha_k)^2 \left( |z^{h,\delta}_{k}|-x^\dagger, (G + A^*_h A_h)^{-1}  A^*_h( y^\delta - A_h |z^{h,\delta}_{k}|) \right) \\
& \qquad + 4(1-\alpha_k)^2 \|(G + A^*_h A_h)^{-1}  A^*_h( y^\delta - A_h |z^{h,\delta}_{k}|)\|^2 \\
&  = 4(1-\alpha_k)^2 \left( A_h (|z^{h,\delta}_{k}|-x^\dagger) , (\tilde{G} + A_h A^*_h)^{-1} (y^\delta - A_h |z^{h,\delta}_{k}| ) \right) \\
& \qquad + 4(1-\alpha_k)^2 \|(G + A^*_h A_h)^{-1}  A^*_h( y^\delta - A_h |z^{h,\delta}_{k}|)\|^2  \\
&  =  4(1-\alpha_k)^2 \left( (\tilde{G} + A_h A^*_h)^{-1} (y^\delta - A_h |z^{h,\delta}_{k}| ) ,  y^\delta -A_h x^\dagger \right) \\
&  - 4(1-\alpha_k)^2 \left( (\tilde{G} + A_h A^*_h)^{-1} (y^\delta - A_h |z^{h,\delta}_{k}| ) , \tilde{G} (\tilde{G} + A_h A^*_h)^{-1} (y^\delta - A_h |z^{h,\delta}_{k}| \right) \\
& \leq 4(1-\alpha_k)^2 \|(\tilde{G} + A_h A^*_h)^{-1} ( y^\delta - A_h |z^{h,\delta}_{k}|)\|_{\mathcal{Y}} \\
& \qquad \cdot \left\{\delta + h C^\dagger- \mu \|\tilde{G}^{1/2}_h(\tilde{G} + A_h A^*_h)^{-1} ( y^\delta - A_h |z^{h,\delta}_{k}|)\|_{\mathcal{Y}} \right\},
\end{array}
\end{equation*}
which implies that for all $k< k^*$,
\begin{equation}\label{DecreasingPf}
\|z^{h,\delta}_{k+1}-x^\dagger\| \leq \alpha_k \|x^\dagger\| + \|z^{h,\delta}_{k+1}-(1-\alpha_k)x^\dagger\| \leq \alpha_k \|x^\dagger\| + (1-\alpha_k) \|z^{h,\delta}_{k}-x^\dagger\|
\end{equation}
by noting the choice of $k^*$ in (\ref{Method1discrepancy}). Let $(h_n, \delta_n, A_n, y^n, k_n)_n$ be the sequence as defined in the proof of Theorem \ref{reguThmPosteriori}. Then, for $n>m$,
\begin{equation}\label{PfIneqEndMethod2}
\begin{array}{ll}
& \hspace{-5mm} \|x^{h_n,\delta_n}_{k_n}-x^\dagger\| \leq C_f \|z^{h_n,\delta_n}_{k_n}-x^\dagger\| \leq C_f \left\{ \alpha_{k_n-1} \|x^\dagger\| + (1-\alpha_{k_n-1}) \|z^{h_n,\delta_n}_{k_n-1}-x^\dagger\| \right\} \\
& \hspace{-5mm} \leq C_f \left\{ \alpha_{k_n-1} \|x^\dagger\| + (1-\alpha_{k_n-1})\alpha_{k_n-2} \|x^\dagger\| +(1-\alpha_{k_n-1})(1-\alpha_{k_n-2}) \|z^{h_n,\delta_n}_{k_n-2}-x^\dagger\| \right\}  \\
& \hspace{-5mm} \leq \cdots \leq C_f \left\{  \left[ \sum^{k_n-1}_{i=k_m} \alpha_i \prod^{k_n-2}_{j=i} (1-\alpha_j) \right]\|x^\dagger\| +  \left[ \prod^{k_n-1}_{j=k_m} (1-\alpha_j)\right] \|z^{h_n,\delta_n}_{k_m}-x^\dagger\| \right\} \\
& \hspace{-5mm} \leq C_f \left\{  \left[ \sum^{k_n-1}_{i=k_m} \alpha_i \prod^{k_n-2}_{j=i} (1-\alpha_j) \right] \|x^\dagger\| + \|z^{h_n,\delta_n}_{k_m}-x^\dagger\| \right\} \\
& \hspace{-5mm} \leq C_f \left\{  \left[ \sum^{k_n-1}_{i=k_m} \alpha_i \prod^{k_n-2}_{j=i} (1-\alpha_j) \right] \|x^\dagger\| + \|z_{k_m}-x^\dagger\| + \|z_{k_m}-z^{h_n,\delta_n}_{k_m}\| \right\}.
\end{array}
\end{equation}
According to the requirement (\ref{AssumptionAlpha}) of $\alpha_k$ and  Theorem \ref{NoiseFreeConvergence}, one can fix $m$ so large that the first two terms on the right-hand side of (\ref{PfIneqEndMethod2}) are sufficiently close to zero; now that $k_m$ is fixed, we can apply inequality (\ref{lemma_operator_ineq2}), cf. Proposition \ref{estimateNoisy}, to conclude that the left-hand side of (\ref{PfIneqEndMethod2}) must go to zero when $n\to\infty$.
\end{proof}


\section{Application in biosensor tomography}

Over the last few decades, biosensor-based techniques have made a significant impact in many fields, such as antibody engineering, virology, immunology, and the pharmaceutical industry. To design an efficient biosensor instrument that is specifically functionalized, scientists must know the physical chemistry of biomolecular surface interactions. The study of interaction information from biosensor data is called \emph{Biosensor Tomography}. In our experiments, we collect biosensor data that measures the analyte biomolecules of several different concentrations on a sensor chip with immobilized ligand molecules that form complexes with analytes. Such measured data is termed as a sensorgram, where the systems response, proportional to total complex concentration, is measured over time for different analyte injections. Recently, a mathematical model, named \emph{Rate Constant Map Theory}, has been developed in \cite{AIDA} to describe the relationship between biosensor data and the rate constants distribution (RCD). In this theory, the number of local peaks of RCD and their positions are interpreted as the interaction numbers and the corresponding rate constants. However, RCD cannot be measured directly, and from the rate constant map theory, reconstructing the RCD from biosensor data is an imaging tomography problem, which is a typical ill-posed inverse problem.

To be more precise, let's recall the kinetics for biosenors. For a single kinetic model, the binding of the analyte to the ligand on the sensor chip consists two steps. First, the analyte is transferred out of the bulk solution towards the sensor chip surface. Second, the binding of the analyte to the ligand takes place. Since the second step is dominant in the binding process, we can only consider the equilibrium:
\begin{equation*}
[\mathbf{D}] +[\mathbf{L}] \autoleftrightharpoons{$k_a$}{$k_d$} [\mathbf{DL}],
\label{equilibrium}
\end{equation*}
by assuming that the mass transfer kinetics are extremely fast. Here, $[\mathbf{D}]$, $[\mathbf{L}]$, and $[\mathbf{DL}]$ denote the concentrations of the analyte, ligand, and complex, respectively. $k_a$ and $k_d$ represent the association and dissociation rate constants. Then, the rate of complex formation can be described by:
\begin{equation}
\frac{d [\mathbf{DL}](t)}{dt} = k_a \cdot [\mathbf{D}](t) \cdot [\mathbf{L}](t) - k_d \cdot [\mathbf{DL}](t),
\label{rateEq}
\end{equation}
where $t$ is the analysis time. Suppose that the analyte $\mathbf{D}$ is injected and flushed over the surface in such a way that the concentration can be assumed to be fixed during the study, i.e. $[\mathbf{D}](t)\equiv C$ ($C>0$ is a constant). Then, the amount of free ligand will decrease with time according to $[\mathbf{L}](t)=[\mathbf{L}](0)-[\mathbf{DL}](t)$. Assume that the sensor response $R$ is proportional to the complex concentration $[\mathbf{DL}](t)$, i.e. $R(t)=\sigma \cdot [\mathbf{DL}](t)$, where $\sigma$ is a positive number. Then, Equation (\ref{rateEq}) becomes:
\begin{equation}
\frac{d R(t)}{dt} = k_a \cdot C \cdot \left( R_{max} - R(t) \right) - k_d \cdot R(t),
\label{rateEq3}
\end{equation}
where $R_{max}=\sigma \cdot[\mathbf{L}](0)$. Set $R(t_0)=0$, by solving (\ref{rateEq3}) we obtain that:
\begin{equation}
R(t) = R_{max} \cdot \frac{k_a C}{k_d + k_a C} \cdot\left( 1- e^{(k_d + k_a C)(t-t_0)} \right).
\label{rateEq4}
\end{equation}

Now, suppose that in the whole chemistry reaction there are $m$ analytes and $n$ binding sites on the biosensor surface and first order kinetics. Denote by $(k_{a,i}, k_{d,j})$ the pair of association and dissociation constants for the interaction between the $i$th analyte and $j$th binding site. Let $R_{i,j}(t)$ be the response at time $t$ of a complex with association constant $k_{a,i}$ and dissociation constant $k_{d,j}$. Then, according to (\ref{rateEq4}) we have:

\begin{equation}\label{Rij}
\begin{array}{ll}
& \hspace{-5mm} R_{i,j}(t) = \\ &
\left\{
\begin{array}{ll}
& 0, \qquad t\leq t_0 + \Delta t, \\
& R^{max}_{i,j}  \frac{k_{a,i} C}{k_{d,j}+ k_{a,i} C} \left( 1- e^{-(k_{d,j}+ k_{a,i} C)(t-t_0)} \right), ~t_0+ \Delta t < t \leq t_0+ t_{inj}+ \Delta t, \\ &
R^{max}_{i,j} \frac{k_{a,i} C}{k_{d,j}+ k_{a,i} C} \left( 1- e^{-(k_{d,j}+ k_{a,i} C) t_{inj}} \right) e^{-k_{d,j}(t-t_0-t_{inj})} ,   t> t_0+ t_{inj}+ \Delta t,
\end{array}\right.
\end{array}
\end{equation}
where constant $C$ is the concentration of the analyte, $t_0$ is the time when the injection of the analyte begins, and $t_{inj}$ is the injection time. The adjustment parameter $\Delta t$ is a time delay that accounts for the fact that it usually takes some time for the detector to respond to the injection. Constant $R^{max}_{i,j}$ is the total surface binding capacity, corresponding to association and dissociation constants $k_{a,i}$ and $k_{d,j}$, i.e., the detector response when every binding site on the biosensor surface has formed a complex with the analyte.

In \emph{Rate Constant Map Theory}, we assume that the total measured response, $y$, of a system can be written as a linear combination of some individual responses $R_{i,j}$, namely $y  = \sum^{m,n}_{i,j=1} R_{i,j}$. If we let $m,n\to+\infty$ in the above equation, we obtain the final continuous model of the biosensor tomography,
\begin{equation}
A x:= \int_{\Omega} K(t,C;k_a,k_d) x(k_a,k_d) d k_a d k_d = y(t;C), \quad (t,C)\in \Theta,
\label{IntegralEq}
\end{equation}
where $\Omega, \Theta\subset \mathbb{R}^2_+$ denote the interested domain of rate constants and the measurement domain for variables $t$ and $C$, respectively, and the kernel function $K(\cdot)$ is defined as:
\begin{equation}\label{Rij}
\begin{array}{ll}
& \hspace{-5mm} K(t,C;k_a,k_d) = \\
&  \left\{
\begin{array}{ll}
& 0, \qquad t\leq t_0 + \Delta t, \\
& \frac{k_{a} C}{k_{d}+ k_{a} C} \left( 1- e^{-(k_{d}+ k_{a} C)(t-t_0)} \right), ~t_0+ \Delta t < t \leq t_0+ t_{inj}+ \Delta t, \\ &
\frac{k_{a} C}{k_{d}+ k_{a} C} \left( 1- e^{-(k_{d}+ k_{a} C) t_{inj}} \right) e^{-k_{d}(t-t_0-t_{inj})}, ~t> t_0+ t_{inj}+ \Delta t.
\end{array}\right.
\end{array}
\end{equation}
Here the function $x(k_a,k_d)$, which is the generalization of the total surface binding capacity $\{R^{max}_{i,j}\}$ in our model, is known as the (continuous) rate constant map; see \cite{Svitel-2003} for details.

In practice, besides the noise in the measurement data $y^\delta$, the forward model is also known inexactly. The main uncertainty in our model (\ref{IntegralEq}) is the adjustment parameter $\Delta t$, which is usually estimated by experience according to the shape of the response curve. Let $K_h$ be the inexact kernel function of $K$ with the adjustment parameter $\Delta t$ replaced by a perturbed one $\Delta t_h$. Then, for the inexact forward operator $A_h$ with perturbed kernel function $K_h$, we have:

\begin{lemma}\label{Ah}
If $|\Delta t_h - \Delta t|\leq h/\sqrt{2}$, then $\|A_h - A\|_{L^2(\Omega) \to L^2(\Theta)} \leq  h$.
\end{lemma}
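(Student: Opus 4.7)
The plan is to reduce the operator norm to a kernel $L^2$-norm via the Hilbert–Schmidt bound
\[
\|A_h - A\|_{L^2(\Omega)\to L^2(\Theta)}^2 \;\le\; \|K_h - K\|_{L^2(\Theta\times\Omega)}^2 \;=\; \int_{\Theta}\!\int_{\Omega} |K_h-K|^2\, dk_a\, dk_d\, dt\, dC,
\]
so that it suffices to bound the right-hand side by $h^2$.

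The decisive structural observation is that $K$ and $K_h$ share \emph{exactly the same analytic expressions} on the three pieces of the piecewise definition; only the breakpoints in $t$ differ. Consequently $K_h-K$ is supported on the union of the two ``transition strips''
\[
I_1 := \bigl(t_0 + \min(\Delta t,\Delta t_h),\; t_0 + \max(\Delta t,\Delta t_h)\bigr),
\qquad I_2 := t_{inj} + I_1,
\]
each of length $|\Delta t_h - \Delta t|$ and disjoint for $|\Delta t_h - \Delta t| < t_{inj}$. On $I_1$ one kernel is $0$ and the other is the association-phase expression $f_1(t;C,k_a,k_d) = \frac{k_a C}{k_d+k_a C}\bigl(1-e^{-(k_d+k_a C)(t-t_0)}\bigr)$; on $I_2$ the difference equals $f_1(t)-f_2(t)$ between the association and dissociation expressions. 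Next I would establish the pointwise bound $|K_h-K|\le 1$ on both strips by using the two elementary inequalities $\frac{k_a C}{k_d+k_a C}\le 1$ and $0\le 1-e^{-\lambda}\le 1$ for $\lambda\ge 0$, together with $e^{-k_d(t-t_0-t_{inj})}\le 1$ in the dissociation regime.

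Combining the support analysis with the pointwise estimate gives
\[
\|K_h - K\|_{L^2(\Theta\times\Omega)}^2 \;\le\; \int_\Omega \int_{C\text{-range}} \Bigl(\int_{I_1} 1\, dt + \int_{I_2} 1\, dt\Bigr) dC\, dk_a\, dk_d \;\le\; 2\,|\Delta t_h - \Delta t|^2,
\]
where in the last step one absorbs the measure factors coming from the $(C,k_a,k_d)$-integration together with one factor $|\Delta t_h-\Delta t|$ from the strip width into the remaining factor $|\Delta t_h-\Delta t|$; this uses that on each strip either the association formula vanishes at the left endpoint or the dissociation formula matches the association formula at the right endpoint, yielding an $O(|\Delta t_h-\Delta t|)$ pointwise bound. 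Inserting the hypothesis $|\Delta t_h - \Delta t|\le h/\sqrt{2}$ yields $\|K_h-K\|_{L^2}^2 \le 2\cdot h^2/2 = h^2$, which is the claim.

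The main obstacle will be precisely the last step: extracting the \emph{second} factor of $|\Delta t_h-\Delta t|$ from the pointwise size of $K_h-K$ on the strips (rather than only the strip length), since the crude bound $|K_h-K|\le 1$ alone would give $\|K_h-K\|_{L^2}\lesssim\sqrt{|\Delta t_h-\Delta t|}$, not the linear rate needed for the $\sqrt{2}$ constant in the hypothesis. Concretely I would exploit that $f_1$ is continuous with value $O(|\Delta t_h-\Delta t|)$ at the left endpoint of $I_1$ (up to the factor $\frac{k_a C}{k_d+k_a C}\le 1$), and that $f_1-f_2$ vanishes at the right endpoint of $I_2$ and is Lipschitz in $t$ there, so that $|K_h-K|\le |\Delta t_h-\Delta t|$ uniformly on both strips, independent of $(C,k_a,k_d)$ after normalising the domains by $t_{inj}$ or absorbing the corresponding bounded factors into the convention defining $\Omega$ and $\Theta$. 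The rest is a routine integration.
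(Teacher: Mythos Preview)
Your overall strategy matches the paper's: bound the operator norm by the Hilbert--Schmidt norm of the difference kernel $H=K_h-K$, and observe that $H$ is supported on two $t$-strips of width $|\Delta t_h-\Delta t|$. The gap is exactly where you flag it, and your proposed fix does not work.

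The problem is that $K$ is genuinely \emph{discontinuous} in $t$ at the breakpoint $t=t_0+\Delta t$: there it jumps from $0$ to
\[
\frac{k_a C}{k_d+k_a C}\Bigl(1-e^{-(k_d+k_a C)\Delta t}\Bigr),
\]
and since $\Delta t>0$ is a \emph{fixed} model parameter (e.g.\ $\Delta t=0.1$ or $0.2$ in the paper's examples), this jump is of order $1$, not of order $|\Delta t_h-\Delta t|$. Your claim that ``$f_1$ is continuous with value $O(|\Delta t_h-\Delta t|)$ at the left endpoint of $I_1$'' confuses the strip width with the value of $t-t_0$ there; the latter is $\approx\Delta t$, not $\approx 0$. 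The same failure occurs on $I_2$: a direct check shows that $f_1$ and $f_2$ do \emph{not} agree at $t=t_0+t_{inj}+\Delta t$ either (compare $1-e^{-(k_d+k_aC)(t_{inj}+\Delta t)}$ with $(1-e^{-(k_d+k_aC)t_{inj}})e^{-k_d\Delta t}$), so $f_1-f_2$ does not vanish at the right endpoint of $I_2$. Consequently $|H|$ is only $O(1)$ on both strips, and the Hilbert--Schmidt route yields at best $\|A_h-A\|\le C\sqrt{|\Delta t_h-\Delta t|}=O(\sqrt{h})$, precisely the rate you identified as insufficient. Testing against $x\equiv 1$ shows this $\sqrt{h}$ behaviour is in fact sharp for the operator norm, so no alternative pointwise estimate will rescue the argument.

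For what it is worth, the paper's own proof does not close this gap either: it ends with $\|A_h-A\|\le\sqrt{\int_\Omega H^2\,dk_a\,dk_d}\le h$ without integrating over $\Theta$ and without justifying the final inequality, and its displayed formula for $H$ on the second strip omits the association contribution. So you should not expect to recover the lemma with the stated constant $1/\sqrt{2}$ via this analysis; what the strip argument honestly delivers is an $O(\sqrt{h})$ bound on $\|A_h-A\|$.
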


\begin{proof}
Define by $H= K_h - K$ the difference between exact and inexact kernel functions. Then, we have:

\begin{equation}\label{H}
H = \left\{
\begin{array}{ll}
& 0, \qquad t\leq t_0 + \min(\Delta t, \Delta t_h), \\
& \frac{k_{a} C}{k_{d}+ k_{a} C} \left( 1- e^{-(k_{d}+ k_{a} C)(t-t_0)} \right), \\
& \qquad t_0 + \min(\Delta t, \Delta t_h) < t \leq t_0 + \max(\Delta t, \Delta t_h), \\
& 0, \qquad  t_0 + \max(\Delta t, \Delta t_h) < t \leq t_0+ t_{inj}+ \min(\Delta t, \Delta t_h), \\ &
\frac{k_{a} C}{k_{d}+ k_{a} C} \left( 1- e^{-(k_{d}+ k_{a} C) t_{inj}} \right) e^{-k_{d}(t-t_0-t_{inj})} , \\
& \qquad  t_0+ t_{inj}+ \min(\Delta t, \Delta t_h) < t \leq t_0+ t_{inj}+ \max(\Delta t, \Delta t_h), \\
& 0, \qquad\qquad  t> t_0+ t_{inj}+ \max(\Delta t, \Delta t_h),
\end{array}\right.
\end{equation}
which implies together with the Cauchy-Schwarz inequality that:
\begin{equation*}
\begin{array}{ll}
& \|A_h - A\|_{L^2(\Omega) \to L^2(\Theta)} = \max\limits_{x: \|x\|_{L^2(\Omega)}=1} \|A_h x - Ax\|_{L^2(\Theta)}  \\
& \qquad =  \max\limits_{x: \|x\|_{L^2(\Omega)}=1} \|\int_{\Omega} H(t,C;k_a,k_d) x(k_a,k_d) d k_a d k_d\|_{L^2(\Theta)} \\
& \qquad  \leq \sqrt{\int_{\Omega} H^2(t,C;k_a,k_d) d k_a d k_d} \leq h,
\end{array}
\end{equation*}
which yields the required inequality.
\end{proof}

In order to apply Algorithms 1 and 2 for solving (\ref{IntegralEq}), at the end of this section, we derive the explicit formulas for quantities $A^*_h A_h x$ and $A^*_h y^\delta$ for our 2D integral operator as follows:
\begin{equation*}
\label{AAx}
A^*_h A_h x= \int_{\Omega} x(k'_a,k'_d) \left[ \int_{\Theta} K_h(t,C;k'_a,k'_d) K_h(t,C;k_a,k_d) dt dC  \right] dk'_a dk'_d,
\end{equation*}
\begin{equation*}
\label{Ay}
A^*_h y^\delta= \int_{\Theta} K_h(t,C;k_a,k_d) y^\delta(t,C) dt dC.
\end{equation*}


\section{Computer simulations}
\label{Simulations}

\subsection{Tests for model problems}

In this section, we present some artificial examples to demonstrate the effectiveness of
the proposed non-negativity preserving iterative regularization methods~-- Algorithms 1 and 2. We take the biosensor tomography considered in Section 5 as an example. The simulation
consists of two steps. First, a simulated signal $y$ (input signal) is generated by computer according to equation (\ref{IntegralEq}) for a given solution $x(k_a,k_d)$ (input rate constant map). Then, the polluted data $y^\delta$, which is generated by adding the artificial noise, is processed through our algorithms, and the retrieved rate constant map is compared with input map. To this end, we divide both sides of integral equation (\ref{IntegralEq}) by the constant $2\sqrt{\int_{\Theta} \int_{\Omega} K^2(t,C;k_a,k_d) dk_a dk_d dt dC}$. Then, the newly defined integral operator, denoted also by $A$, satisfies $\|A\|_{L^2(\Omega)\to L^2(\Theta)} \leq 1/2$. If the inexact adjustment parameter $\Delta t_h$ fulfills the inequality $|\Delta t_h - \Delta t|\leq h'$ with $h'\leq 1/\sqrt{8}$, the corresponding noisy operator $A_h$ satisfies $\|A_h\|_{L^2(\Omega)\to L^2(\Theta)} \leq \|A\|_{L^2(\Omega)\to L^2(\Theta)} +\sqrt{2} h'\leq1$.

The main parts in Algorithms 1 and 2 are iterations in (\ref{Iteration2}) and (\ref{Iteration}), which are updated by solving the following equations,
\begin{equation}
\label{Eq2Iteration}
(G + A^*_h A_h) z^{h,\delta}_{k+1} = (G - A^*_h A_h) |z^{h,\delta}_{k} | +  2 A^*_h y^\delta
\end{equation}
and
\begin{equation}
\label{Eq1Iteration}
\hspace{-2mm} (G + A^*_h A_h) z^{h,\delta}_{k+1} = \alpha_k (G + A^*_h A_h) x_{0} +  (1-\alpha_k) \left[ (G - A^*_h A_h) |z^{h,\delta}_{k} | +  2 A^*_h y^\delta \right].
\end{equation}

It should be noted that for $G=\mu I$, (\ref{Eq2Iteration}) and (\ref{Eq1Iteration}) are actually the Fredholm integral equations of the second kind, which can be solved by various numerical approaches, see e.g. \cite[Chapter 12]{AtkinsonHan2009}. In this work, we apply linear finite elements to solve equations (\ref{Eq2Iteration}) and (\ref{Eq1Iteration}). To this end, let $\mathcal{Y}_n$ be the $n$ dimensional approximation of $\mathcal{Y}$ with an orthonormal basis $\{\phi_i(t,C)\}^n_{i=1} \subset L^2_+(\Omega)$. Denote by $P_n$ the orthogonal projection operator acting from $\mathcal{Y}$ into $\mathcal{Y}_n$. Define $A_n:= P_n A$ and $\mathcal{X}_n:= A^*_n \mathcal{Y}_n \subset L^2(\Omega)$. Denote by $\psi_j(k_a,k_d)=\int_{\Theta} K(t,C;k_a,k_d) \phi_j(t,C) dt dC \in L^2_+(\Omega)$. Then, the finite element solution $\hat{x}_n(k_a,k_d)\in \mathcal{X}_n$ and the finite approximation of data $\hat{y}_n(t,C) \in \mathcal{Y}_n$ have the decompositions $\hat{x}_n(k_a,k_d)=\sum^n_{j=1} [\mathbf{x}]_j \, \psi_j(k_a,k_d)$ and $\hat{y}_n(t,C) =\sum^n_{i=1} [\mathbf{y}]_i \, \phi_i(t,C)$ with the coefficient vectors $\mathbf{x}$ and $\mathbf{y}$, respectively. Consequently, the finite element approximation of integral equation (\ref{IntegralEq}) can be written as the following system of linear algebraic equations,
\begin{equation}\label{MatrixEq}
\mathbf{A} \mathbf{x} = \mathbf{y},
\end{equation}
where
\begin{equation}\label{Amatrix}
[\mathbf{A}]_{ij} = \int_{\Omega} \psi_j(k_a,k_d) \left\{ \int_{\Theta} K(t,C;k_a,k_d) \phi_i(t,C) dt dC \right\}  dk_a dk_d.
\end{equation}

Uniformly distributed noises with the magnitudes $h'\in(0,1/\sqrt{8})$ and $\delta'>0$ are added to the accuracy adjustment parameter and discretized exact right-hand side:
\begin{equation*}\label{Data}
\begin{array}{ll}
& \Delta t_h := \left[ 1 + h' \cdot(2 \textrm{Rand}(\cdot) -1) \right] \cdot \Delta t, \\
&
[\mathbf{y}^\delta]_i := \left[ 1 + \delta' \cdot(2 \textrm{Rand}(\cdot) -1) \right] \cdot [\mathbf{y}]_i, \quad i=1, ..., n,
\end{array}
\end{equation*}
where $\textrm{Rand}(\cdot)$ returns a pseudo-random value drawn from a uniform distribution on [0,1]. The noise level of perturbed operator and  measurement data are simply calculated by $h=\|\mathbf{A}_h - \mathbf{A}\|_2$ and $\delta=\|\mathbf{y}^\delta - \mathbf{y}\|_2$, respectively, where $\|\cdot\|_2$ denotes the standard matrix or vector norm in Euclidean space. Here, $\mathbf{A}_h$ is defined in (\ref{Amatrix}) with $\Delta t$ in $K(t,C;k_a,k_d)$ replaced by $\Delta t_h$.

Now, let's consider the numerical solution of equations (\ref{Eq2Iteration}) and (\ref{Eq1Iteration}). For clarity of statement, we only consider the equation (\ref{Eq2Iteration}), whose finite element solution can be obtained by solving the following system of linear equations,
\begin{equation}\label{FiniteModel}
\mathbf{G} \mathbf{z}^{k+1} + \mathbf{B} \mathbf{z}^{k+1} = \mathbf{f}(\mathbf{z}^{k}),
\end{equation}
where
\begin{equation*}
\begin{array}{ll}
& [\mathbf{B}]_{ij}= \int_{\Omega} g_j(k_a,k_d) \psi_i(k_a,k_d) dk_a dk_d, \\
& [\mathbf{f}(\mathbf{z}^{k})]_i = 2 \int_{\Omega} \psi_i(k_a,k_d) \left\{ \int_{\Theta} K_h(t,C;k_a,k_d) \phi_i(t,C) dt dC \right\}  dk_a dk_d \cdot [\mathbf{y}^\delta]_i \\
& \qquad\qquad + [\mathbf{G} \mathbf{z}^{k}]_i -  \left| [\mathbf{z}^{k}]_i \right| \left\{ \int_{\Omega} g_i(k_a,k_d) \psi_i(k_a,k_d) dk_a dk_d \right\}, \\
& g_j(k_a,k_d)= \int_{\Omega} \psi_j(k'_a,k'_d) \left[ \int_{\Theta} K_h(t,C;k'_a,k'_d) K_h(t,C;k_a,k_d) dt dC  \right] dk'_a dk'_d.
\end{array}
\end{equation*}
Here $\mathbf{G}$ is a positive-definite matrix of size $n\times n$.

In this section, we investigate our two approaches with the general type of $G$, which is assumed to be a strictly positive definite and self-adjoint bounded linear operator. It is not difficult to show that if $G$ has the type of (\ref{eq:multiop}), and there exists an associated invertible operator $\tilde{G}_h: \mathcal{Y} \to \mathcal{Y}$ of $G$ such that,
\begin{equation*}
(G + A^*_h A_h)^{-1} A^*_h = A^*_h (\tilde{G}_h + A_h A^*_h)^{-1} \textrm{~and~} \tilde{G}_h=(\tilde{G}^{1/2}_h)^*\tilde{G}^{1/2}_h,
\end{equation*}
both Theorems \ref{reguThmPosteriori} and \ref{ThmRegu1} still hold under the following modified discrepancy principle: find $k^*$ such that for all $k \leq  k^*$,
\begin{equation}\label{Method1discrepancy2}
\begin{array}{ll}
& \|\tilde{G}^{1/2}_h\| \cdot \|\tilde{G}^{1/2}_h(\tilde{G}_h + A_h A^*_h)^{-1} ( y^\delta - A_h |z^{h,\delta}_{k^*}|)\|_{\mathcal{Y}}   \\
&  \qquad \leq \tau_0 (\delta + h C^\dagger)  \leq  \|\tilde{G}^{1/2}_h\|  \cdot \|\tilde{G}^{1/2}_h(\tilde{G}_h + A_h A^*_h)^{-1} ( y^\delta - A_h |z^{h,\delta}_{k}|)\|_{\mathcal{Y}},
\end{array}
\end{equation}
where $\tau_0>1$ is a fixed number. It is clear that the stopping rule (\ref{Method1discrepancy}) yields a specific case of (\ref{Method1discrepancy2}) with $G=\mu I$ and $\tau_0=\tau \cdot \mu$. The numerical realization for (\ref{Method1discrepancy2}) takes the following form: ($k\leq k^*$)
\begin{equation*}\label{Method1discrepancyD}
\hspace{-1mm} \|\mathbf{G}^{\frac{1}{2}}(\mathbf{G} + \mathbf{A}_h \mathbf{A}_h^T )^{-1} ( \mathbf{y}^\delta - \mathbf{A}_h |\mathbf{z}^{k^*}|)\|_2 \leq  \frac{\delta + h C^\dagger}{\|\mathbf{G}^{\frac{1}{2}}\|} < \|\mathbf{G}^{\frac{1}{2}}(\mathbf{G} + \mathbf{A}_h \mathbf{A}_h^T )^{-1} ( \mathbf{y}^\delta - \mathbf{A}_h |\mathbf{z}^{k}|)\|_2.
\end{equation*}

To assess the accuracy of the approximate solutions, we define the $L^2$-norm relative error for an approximate solution $\hat{x}^{h,\delta}_{k^*}:=f_+\left( \sum^n_{j=1} [\mathbf{z}^{k^*}]_j \, \psi_j(k_a,k_d) \right)$: $$\textrm{L2Err}:= \|\hat{x}^{h,\delta}_{k^*} - x^\dagger\|_{L^2(\Omega)}/\|x^\dagger\|_{L^2(\Omega)},$$ where $x^\dagger$ is the the phantom, which has been used to generate data. In all simulations, we set $f_+(\cdot)=|\cdot|$ and $f_+(\cdot)=(\cdot+|\cdot|)/2$  for Algorithms 1 and 2, respectively.

All the computations were performed on a dual core personal computer with 8.00 GB RAM with MATLAB version R2019b. All experiments in this subsection are implemented for the following two examples:

\textbf{Example 1}: $\Omega:=[0,3]\times[0,3]$, $\Theta:=[0,5]\times[0.001,2]$, $t_0=0, \Delta t=0.1, t_{inj}=2$,
$x^\dagger(k_a,k_d)\equiv1$. The measurements are computed on a
mesh with 49 nodes and 72 elements.


\textbf{Example 2}: $\Omega:=[0,9]\times[0,2]$, $\Theta:=[0,8]\times[0.01,1]$, $t_0=0, \Delta t=0.2, t_{inj}=4$,
$x^\dagger(k_a,k_d)=0.5 \left\{  e^{-8[(k_a-3)^2 + (k_d-0.5)^2]} + e^{-32[(k_a-6)^2 + (k_d-1.5)^2]} \right\}$. The measurements are computed on a
mesh with 100 nodes and 162 elements.

\subsubsection{Influence of the preconditioning operator $G$.}

The purpose of this subsection is to explore the dependence of the solution accuracy
and the convergence speed on the preconditioning operator $G$ (for simplicity, we only consider its finite dimensional analogue $\mathbf{G}$), and thus to give a guide on the choices for it in practice. For focusing on the effect of these model parameters on the Algorithms 1 and 2, we fix $h'=\delta'= 0.1\%$
in this subsection. Further, we set $x_0=0$, $C^\dagger=1.1$, $\tau_0=1.1$ and $N_{max}=1,000,000$.

\begin{table}[!b]
{\footnotesize
\caption{The iterative number $k^*$ and the corresponding relative error L2Err vs $\mathbf{G}$. $h'=\delta'= 0.1\%$. $C^\dagger=1.1, \tau_0=1.1$ in Algorithms 1 and 2, and $\alpha_k=1/k$ in Algorithm 2.}\label{tab:EkVsQ}
\begin{center}
\begin{tabular}{|c|c|c|c|c|c|c|c|c|c|c|} \hline
\multirow{3}{*}{$\mathbf{G}$} &
\multicolumn{4}{c|}{\textbf{Algorithm 1}} & \multicolumn{4}{c|}{\textbf{Algorithm 2}} \\
\cline{2-9} &
\multicolumn{2}{c|}{\textbf{Example 1}} &
\multicolumn{2}{c|}{\textbf{Example 2}} &
\multicolumn{2}{c|}{\textbf{Example 1}} &
\multicolumn{2}{c|}{\textbf{Example 2}} \\
\cline{2-9}
 & L2Err & $k^*$ & L2Err & $k^*$ & L2Err & $k^*$ & L2Err & $k^*$  \\ \hline
$\mathbf{G}_1$  &  0.0138 & $N_{\max}$ & 0.0006 & 228910 &  0.0009 & $N_{\max}$ & 0.0037 & $N_{\max}$  \\
$\mathbf{G}_2$  &  0.0086 & $N_{\max}$ & 0.0013 & 64526 &  2.0745e-5 & $N_{\max}$ & 0.0002 & 129082  \\
$\mathbf{G}_3$  & 0.0003 & 188765 & 0.0467 & 122507 &  8.8714e-5 & $N_{\max}$ & 0.0243 & 594791  \\
$\mathbf{G}_4$  &   0.0002 & 24696 & 0.0506 & 13537 &  0.0004 & 37974 & 0.0293  & 41965  \\
$\mathbf{G}_5$  &  0.0318 & 20647 & 0.0022 & 35901 &  0.0229 & 27229 & 0.0012 & 75392 \\
$\mathbf{G}_6$  &  0.0649 & 38976 & 0.0562 & 7626 &  0.0142 & 52076 & 0.0116 & 38853 \\
$\mathbf{G}_7$  &  0.0002 & 79863 & 0.0074 & 13138 &  0.0003 & 56564 & 0.0016 & 67004 \\
$\mathbf{G}_8$  &  0.0570 & 12326 & 0.0526 & 18004 &  0.0215 & 24315 & 0.0159 & 91825 \\
\hline
\end{tabular}
\end{center}
}
\end{table}

Let $n$ be the dimensionality of the finite element space for the approximate solution. We study the following examples of the finite dimensional analogue $\mathbf{G}$ of $G$: $\mathbf{G}_{1}=10^{-6}\lambda_{max} I_n$, $\mathbf{G}_{2}=10^{-4} \lambda_{max} I_n$, $\mathbf{G}_{3}=10^{-3}\lambda_{max} I_n$, $\mathbf{G}_{4}=10^{-2}\lambda_{max} I_n$, $\mathbf{G}_{5}= \textrm{diag}_n(10^{-4}\lambda_{max})$, $\mathbf{G}_{6}= \textrm{diag}_n(10^{-3}\lambda_{max})$, $\mathbf{G}_{7}=U_n \mathbf{G}_{5} U^T_n$, $\mathbf{G}_{8}=U_n \mathbf{G}_{6} U^T_n$, where $\lambda_{max}=\sqrt{\|\mathbf{A}^T_h \mathbf{A}_h\|_2}$ is the maximal eigenvalue of $\mathbf{A}_h$, $\textrm{diag}_n(a)$ denotes a diagonal matrix with the minimal diagonal element $a>0$ and $n-1$ random number between $a$ and $n\cdot a$. $U_n$ denotes a random orthogonal matrix of size $n$. The detailed L2-norm relative errors `L2Err' and the corresponding iterative numbers $k^*$ for both examples are shown in Table \ref{tab:EkVsQ}, which show that the regularized approximate solution by our method not only depends on the spectral of parameter operator $\mathbf{G}$, but also depends on its detailed structure. For the trade-off between the solution accuracy and the iterative number, it is indicated in Table 2 that the best choice of $\mathbf{G}$ is in the form of $\mathbf{G}_{7}=U_n \textrm{diag}_n(10^{-4}\lambda_{max}) U^T_n$ for Example 1 and $\mathbf{G}_{2}=10^{-4} \lambda_{max} I_n$ for Example 2, which will be adapted in the following simulations.

\subsubsection{Comparison with the projected Landweber iteration.}

In this subsection, we compare the behaviors regarding the solution accuracy and the convergence rate between our methods (Algorithms 1 and 2 with the stopping rule (\ref{Method1discrepancy2})) and the projected Landweber iteration (\ref{Landweber}) with both Morozov's conventional discrepancy principle with the same $\tau_0=1.1$: (``Landweber P1'' for short)
\begin{equation*}\label{LandweberDiscrepancy}
\|A_h x^{h,\delta}_{k^*}-y^\delta\|_{\mathcal{Y}} \leq \tau_0(h+\delta)  < \|A_h x^{h,\delta}_{k}-y^\delta\|_{\mathcal{Y}}, \quad \textrm{~for all~} k< k^*,
\end{equation*}
and the newly developed discrepancy principle (\ref{Method1discrepancy2}) (``Landweber P2'' for short). The consideration of ``Landweber P2'' is mainly used for a fair comparison with our approaches under the same stopping rule. The relaxation parameter in projected Landweber methods, cf. (\ref{Landweber}), is set as $\omega=1$. The parameters in our methods are: $x_0=0$, $\mathbf{G}=\mathbf{G}_{7}$ for Example 1 and $\mathbf{G}=\mathbf{G}_{2}$ for Example 2, $C^\dagger=1.1$, $\alpha_k=1/k$ and $\tau_0=1.1$. The results of the simulations are presented in Table \ref{tab:comparison}, from which
we conclude that compared with the conventional Landweber method, Algorithms 1 and 2 provide better accuracy with considerably fewer iterations.

\begin{table}[!tbh]
{\footnotesize
\caption{Comparison with the projected Landweber methods. The CPU time is measured in seconds.}\label{tab:comparison}
\begin{center}
\begin{tabular}{|c|c|c|c|c|c|c|c|c|c|} \hline
$(h',\delta')$ &
\multicolumn{3}{c|}{$(0.1\%,0.1\%)$} &
\multicolumn{3}{c|}{$(1\%,1\%)$} &
\multicolumn{3}{c|}{$(5\%,5\%)$} \\ \hline
\multicolumn{10}{|c|}{\textbf{Example 1}} \\  \hline
\cline{2-10}
Methods   & L2Err & $k^*$ & CPU  & L2Err & $k^*$ & CPU & L2Err & $k^*$ & CPU \\ \hline
Landweber P1& 0.4310 & $N_{\max}$  & 3.6142e3  & 0.4528 & 370895  & 395.3281 & 0.5158 & 1130 & 0.0156   \\
Landweber P2& 0.4310 & $N_{\max}$ & 3.6257e3  & 0.4905 & 63599 & 2.3281  & 0.4964 & 43438 & 1.2813   \\
Algorithm 1    & 0.0002 & 79863 	 & 44.7344   & 0.0008 & 63602   & 34.7969  & 0.0053 & 43438  & 19.5625     \\
Algorithm 2    & 0.0003 & 56235  & 43.6212   & 0.0005 & 62941  & 47.3762  & 0.0021 & 60257   & 42.8194    \\ \hline
\multicolumn{10}{|c|}{\textbf{Example 2}}  \\  \hline
Methods   & L2Err & $k^*$ & CPU  & L2Err & $k^*$ & CPU & L2Err & $k^*$ & CPU \\ \hline
Landweber P1& 0.9285 & 229498  & 1.0150e3 & 0.9360 & 57647   & 44.6563  & 0.9630 & 13     & 0.1719    \\
Landweber P2& 0.9611 & 1989   & 1.0313  & 0.9615 & 1573   & 0.7656  & 0.9619 & 1055   & 0.5469   \\
Algorithm 1   & 0.0007 & 1999 & 4.4219    & 0.0030 & 1575   & 3.4063   & 0.0195 & 1059   & 2.4375   \\
Algorithm 2   & 0.0002 & 3432   & 5.0292       & 0.0016 & 2162   & 5.0594  & 0.0025 & 4284     & 5.6638  \\ \hline
\end{tabular}
\end{center}
}
\end{table}


\begin{figure}[!t]
\centering{
\includegraphics[width=1.08\textwidth]{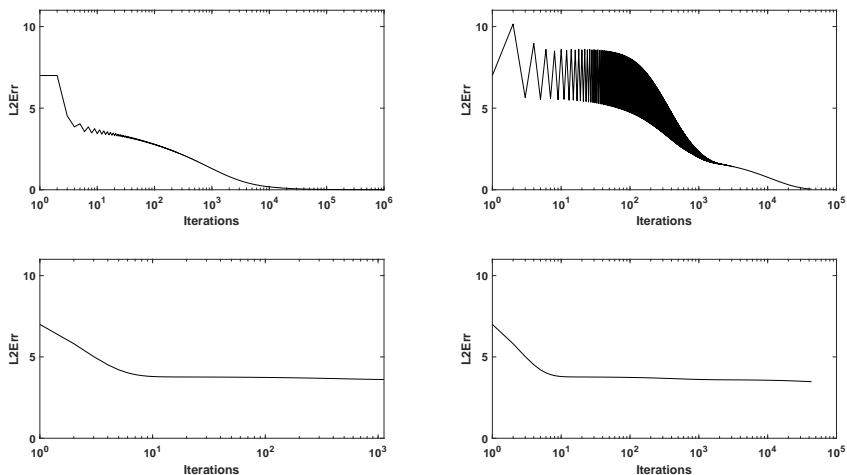}
\caption{The evolution of L2-norm relative errors `L2Err' for different methods for Example 1 with noise levels  $h'=\delta'= 5\%$. Upper (left): Algorithm 2; Upper (right): Algorithm 1; Lower (left): Landweber P1; Lower (right): Landweber P2. }
}
\label{fig:evolution}
\end{figure}

It should be noted that the use of $N_{\max}$ and the large value of obtained errors ``L2Err'' for projected Landweber methods P1 and P2 do not imply the divergence of the employed methods. Indeed, the numerical experiments in this work, see e.g. Fig. 1, indicate that all four methods are convergent. For the investigated model problems, though the approximate solutions by the projected Landweber methods converge monotonically to an exact solution, the convergence speed is extremely slow. Unlike projected Landweber methods, the two newly introduced non-negativity preserving iterative methods belong to another class of regularization methods. The resulting approximate solutions are not monotonically convergent to the exact solution; there are oscillatory phenomena in the desired approximate solutions. Fortunately this oscillation can speed up the convergence of the designed regularized solution, see the upper two figures in Fig. 1.

\subsection{Real data application}

In this section, Algorithms 1 and 2 are tested on actual experimental data~-- parathyroid hormone (PTH)~-- of the Biosensor tomography, discussed in Section 5. In the experiments the human PTH1R receptor was immobilized on a LNB-carboxyl biosensor chip using amine coupling according to manufacturer's instructions. Using the flow rate 50 $\mu L/ min$ at 20.0$^{\circ}$C we did injections of the peptide PTH(1-34) at 10 concentration levels from 5 to 220 nM. The sensorgrams were measured using a QCM biosensor Attana Cell 200 instrument. With the same preconditioning parameter $\mathbf{G}$, Algorithms 1 and 2 produce almost the same approximate RCD. Here, for the concision of the statement, we only present the results by Algorithms 1. The parameters used in this simulation are: $h'=\delta'= 1\%$, $\mathbf{G}=\mathbf{G}_2$ and $\tau_0=1.1$. Algorithm 1 requires 42,943 iterations and takes 417.0469 seconds of CPU time. The total residual error equals 1.6773. The estimated RCD and difference between the simulated individual responses and experiment data are displayed in Figures 2 and 3, respectively. By Fig. 2, our method offers more than one local peak in the reconstructed RCD, and hence one can conclude from the reconstructed RCD that there exist additional interactions in the studied chemical system. Furthermore, according to Fig. 2 one finds three local peaks in the reconstructed RCD, and hence there may exist a third reaction. This is the first time that a mathematical/computer theory supports the existence of three interactions for a PTH biological system. Though biologists have already predicted that more than two interactions could exist for a PTH biological system, due to the limitations of existing software, this prediction could not be verified in practice. It should be noted that in comparison with regression analysis (also known as the parallel reactions model), our methods could accurately resolve the three underlying interactions without \emph{a priori} assuming the existence of parallel reactions. Although more experiments are needed to fully understand the potential as well as the limitations of our approaches, their initial applications in several artificial problems and the real PTH system are quite promising. Hence, the proposed approaches in this paper offer a useful mathematical tool for the theoretical study of biosensor tomography.

\begin{figure}[!hbt]
\centering{
\includegraphics[width=0.7\textwidth]{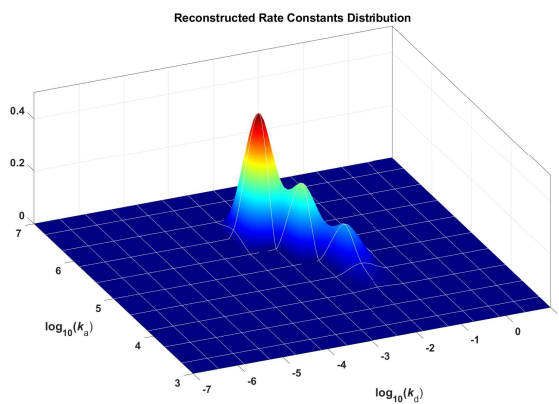}
\caption{The estimated rate constant distribution by Algorithm 1.}
}
\label{fig:evolution}
\end{figure}

\begin{figure}[!hbt]
\centering{
\includegraphics[width=0.7\textwidth]{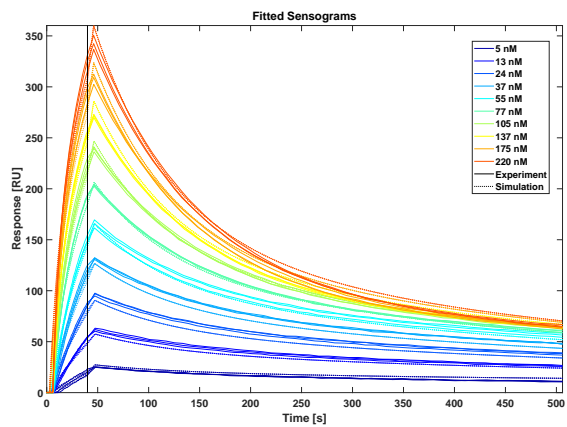}
\caption{The measured individual responses and the simulated responses by Algorithm 1. }
}
\label{fig:evolution}
\end{figure}

\section{Conclusions}

In this paper, we have presented two novel iterative regularization methods for linear ill-posed problems under non-negativity constraints. Unlike projection based methods, our new approaches preserve the non-negativity of approximate solutions $x^{h,\delta}_k$ during iterations. Strong convergence of the suggested iterative methods has been shown. Moreover, under appropriate source conditions, convergence rate results are presented for one of the approaches. The developed methods are applied for the solution of a two dimensional linear Fredholm integral equation of the first kind, which is a model of biosensor tomography. Numerical experiments of two model problems demonstrate that the presented methods are clearly faster than the projected Landweber iteration. A real data problem also indicates that our methods are able to produce a meaningful featured approximate solution that can be used in practice. Similar to the projected Landweber iteration, the two new non-negativity preserving iterative regularization methods that are introduced should also be applicable to solve non-linear ill-posed problems. This will be one of the topics of our future work.

\section*{Acknowledgments}


The authors very appreciate the fruitful discussion with X. Cheng (Zhejiang University) and are particularly grateful that he pointed out the mistake in the proof of Proposition 1 in the early version of this manuscript. We also express our thanks to T. Fornstedt and P. Forss\'{e}n (Karlstad University) for the helpful discussion on the biosensor modeling and the PTH experiments. The work of Y. Zhang is supported by the Swedish Knowledge Foundation (No. 20170059) and the Alexander von Humboldt foundation, and the work of B. Hofmann is supported by the German Research Foundation (DFG-grant HO 1454/12-1).


\bibliographystyle{AIMS}
\bibliography{ZhangHofmann2020}

\section*{Appendix: Proof of Lemma \ref{lemma_operator}.} 
 
By using the triangle inequality, we deduce that,
\begin{equation*}
\begin{array}{ll}
& \|A^*_h A_h - A^* A\| \leq \|A^*_h A_h - A^*_h A\| + \|A^*_h A - A^* A\|
\\ & \quad \leq (\|A_h \|_{L^2(\Omega)\to\mathcal{Y}} + \|A\|_{L^2(\Omega)\to\mathcal{Y}}) h \leq (2\|A \|_{L^2(\Omega)\to\mathcal{Y}} + h) h
\end{array}
\end{equation*}
which yields the first inequality (\ref{lemma_operator_ineq1}) with, $$C_1= 12 \|A \|_{L^2(\Omega)\to\mathcal{Y}} / \|G + A^*A\|$$ for $h_0< \min\left\{ \|A \|_{L^2(\Omega)\to\mathcal{Y}}, \frac{\|G + A^*A\|}{3\|A\|_{L^2(\Omega)\to\mathcal{Y}}} \right\}$  according to the following inequalities,
\begin{equation*}
\begin{array}{ll}
& \hspace{-5mm} \|(G + A^*_hA_h)^{-1} (G - A^*_hA_h) - (G + A^*A)^{-1}(G - A^*A)\|
\\ & \hspace{-5mm}  = \|(G + A^*_hA_h)^{-1} (G + A^*A)^{-1} \cdot \left[2(A^*A-A^*_hA_h)G + A^*_hA_hA^*A - A^*A A^*_hA_h \right] \|
\\ & \hspace{-5mm}  \leq \|G + A^*_hA_h\|^{-1} \|G + A^*A\|^{-1}
\\ & \hspace{-3mm}  \cdot \{ 2 \|G\| \|A^*_h A_h - A^* A\| +  \|A^*_hA_hA^*A- A^*AA^*A\| + \|A^*AA^*A-A^*A A^*_hA_h\| \}
\\ & \hspace{-5mm}  \leq 2 \|G + A^*A\|^{-1} (\|G + A^*A\|- \|A^* A - A^*_h A_h\|)^{-1}  (\|G\|+ \|A^*A\| ) \|A^*_h A_h - A^* A\|
\\ & \hspace{-5mm} \leq 2 \|G + A^*A\|^{-1} (\|G + A^*A\|-(2\|A\|_{L^2(\Omega)\to\mathcal{Y}} + h_0) h_0)^{-1}
\\ & \hspace{-3mm} \cdot (\|G\|+ \|A^*A\| ) (2\|A\|_{L^2(\Omega)\to\mathcal{Y}} + h_0) h .
\end{array}
\end{equation*}
The second inequality can be proven in the same way.

\medskip
\medskip

\end{document}